\newcommand{\barPhi    }{\overline{\Phi}}
\newcommand{\bare}{\overline{{e}}}
\newcommand{\barM}{\overline{{M}}}
\newcommand{\gr}{{\operatorname{gr}\,}}
\DeclareMathOperator{\FL}{FL}
\newcommand{\Rep}{{\operatorname{Rep}}}
\newcommand{\chibar}{\overline{\chi}}
\newcommand{\Vbar}{\overline{V}}
\newcommand{\Ubar}{\overline{U}}
\newcommand{\Xbar}{\overline{X}}
 \newcommand{\sigmabar   }{\overline{\sigma}}
\newcommand{\tv}{\tilde{v}}
\newcommand{\Zpbartimes}{\overline{\mathbb{Z}}_p^\times}
\newcommand{\kappabar}{\overline{\kappa}}
\newcommand{\mc}{\mathcal}
\newcommand{\psibar}{\overline{\psi}}
\newtheorem{ithm}{Theorem}
\newtheorem{thm}[subsubsection]{Theorem}
\newtheorem{lem}[subsubsection]{Lemma}
\newtheorem{cor}[subsubsection]{Corollary}
\newtheorem{prop}[subsubsection]{Proposition}
\theoremstyle{definition}
\newtheorem{defn}[subsubsection]{Definition}
\theoremstyle{remark}
\newtheorem{remark}[subsubsection]{Remark}
\newtheorem{rem}[subsubsection]{Remark}
\newtheorem{example}[subsubsection]{Example}
\newtheorem{examples}[subsubsection]{Examples}
\def\numequation{\addtocounter{subsubsection}{1}\begin{equation}}
\def\nummultline{\addtocounter{subsubsubsection}{1}\begin{multline}}
\def\anumequation{\addtocounter{subsection}{1}\begin{equation}}
\newif\iffinalrun
  \newcommand{\need}[1]{}
  \newcommand{\mar}[1]{}
  \newcommand{\need}[1]{{\tiny *** #1}}
  \newcommand{\mar}[1]{\marginpar{\raggedright\tiny #1}}
\newcommand{\A}{\AA}
\newcommand{\F}{\FF}
\newcommand{\Q}{\QQ}
\newcommand{\Z}{\ZZ}
\newcommand{\m}{\frakm}
\renewcommand{\AA}{{\mathbb A}}
\newcommand{\FF}{{\mathbb F}}
\newcommand{\QQ}{{\mathbb Q}}
\newcommand{\ZZ}{{\mathbb Z}}
\renewcommand{\bf}{\ensuremath{\mathbf{f}}}
\newcommand{\cG}{{\mathcal G}}
\newcommand{\cO}{{\mathcal O}}
\newcommand{\frakm}{\mathfrak{m}}
\newcommand{\Fbar}{\overline{\F}}
\newcommand{\Qbar}{\overline{\Q}}
\newcommand{\Zbar}{\overline{\Z}}
\renewcommand{\o}{\overline}
\newcommand{\Fp}{\F_p}
\newcommand{\Fpbar}{\Fbar_p}
\newcommand{\Zp}{\Z_p}
\newcommand{\Zpbar}{\Zbar_p}
\newcommand{\Qp}{{\Q_p}}
\newcommand{\Qpbar}{\Qbar_p}
\DeclareMathOperator{\coker}{coker}
\DeclareMathOperator{\Ext}{Ext}
\DeclareMathOperator{\Fil}{Fil}
\DeclareMathOperator{\GL}{GL}
\DeclareMathOperator{\Hom}{Hom}
\DeclareMathOperator{\im}{im}
\DeclareMathOperator{\Ind}{Ind}
\DeclareMathOperator{\rank}{rank}
\DeclareMathOperator{\WD}{WD}
\newcommand{\Frob}{\mathrm{Frob}}
\newcommand{\HT}{\mathrm{HT}}
\newcommand{\nr}{\mathrm{nr}}
\newcommand{\ur}{\mathrm{ur}}
\newcommand{\rhobar}{\overline{\rho}}
\newcommand{\into}{\hookrightarrow}
\newcommand{\onto}{\twoheadrightarrow}
\newcommand{\rbar}{\overline{r}}
\newcommand{\cyclo}{\varepsilon}
\newcommand{\Mbar}{\overline{M}}
\newcommand{\fbar}{\overline{f}}
\begin{document}
\title{Potentially crystalline lifts of certain prescribed types}

\author[T.\ Gee]{Toby Gee} \email{toby.gee@imperial.ac.uk} \address{Department of
  Mathematics, Imperial College London,
  London SW7 2AZ, UK}

\author[F.\ Herzig]{Florian Herzig}
\email{herzig@math.toronto.edu}
\address{Department of Mathematics, University of Toronto}

\author[T.\ Liu]{Tong
  Liu}\email{tongliu@math.purdue.edu}\address{Department of
  Mathematics, Purdue University}

\author[D.\ Savitt]{David Savitt} \email{savitt@math.jhu.edu}
\address{Department of Mathematics, Johns Hopkins University}
\thanks{The first author was
  partially supported by a Leverhulme Prize, EPSRC grant EP/L025485/1, Marie Curie Career
  Integration Grant 303605, and by
  ERC Starting Grant 306326.}

\thanks{The second author was partially supported by
  a Sloan Fellowship and an NSERC grant}

\thanks{The third author was partially supported by NSF grants DMS-0901360 and DMS-1406926}

\thanks{The fourth author was partially supported by NSF grant
  DMS-0901049 and  NSF CAREER grant DMS-1054032}

\maketitle

\begin{abstract}
  We prove several results concerning the existence of potentially crystalline lifts
  of prescribed Hodge--Tate weights and inertial types of a given
  representation $\rbar:G_K\to\GL_n(\Fpbar)$, where $K/\Qp$ is a
  finite extension. Some of these results are proved by purely local
  methods, and are expected to be useful in the application of
  automorphy lifting theorems. The proofs of the other results are
  global, making use of automorphy
  lifting theorems.
\end{abstract}

\section{Introduction}\label{sec:intro} Let $p$ be a prime, let $K/\Qp$ be a finite extension, and let
$\rbar:G_K\to\GL_n(\Fpbar)$ 
  be a continuous representation. For many reasons,
it is a natural and important question to study the lifts of $\rbar$ to de
Rham representations $r:G_K\to\GL_n(\Zpbar)$; for example, the de Rham lifts
of fixed Hodge and inertial types are parameterised by a universal (framed)
deformation ring thanks to~\cite{kisindefrings}, and the study of these
deformation rings is an important step in proving automorphy lifting theorems,
going back to Wiles' proof of Fermat's Last Theorem, which made use of
Ramakrishna's work on flat deformations~\cite{MR1935843}. 

It is therefore slightly vexing that (as far as we are aware) it
is currently an open problem to prove that for a general choice of
$\rbar$, a single such lift $r$ exists (equivalently, to show for each
$\rbar$ that at least one of Kisin's deformation rings is
nonzero). Some results in this direction can be found
in the Ph.D.\ thesis of Alain Muller \cite{MullerThesis}.
This note sheds little further light on this question, but
rather investigates the question of congruences between de Rham
representations of different Hodge and inertial types; that is, in
many of our results we
suppose the existence of a single lift, and see what other lifts (of
differing Hodge and inertial types) we can produce from this.
The existence of congruences between
representations of differing such types is conjecturally governed by the
(generalised) Breuil--M\'ezard conjecture (at least for regular Hodge
types; see~\cite{emertongeerefinedBM}). This
conjecture is almost completely open beyond the case of $\GL_2/\Qp$, so it is of
interest to prove unconditional results.

We prove several such results in this paper, by a variety of different
methods. Some of our
results make use of the notion of a \emph{potentially diagonalisable} Galois
representation, which was introduced in~\cite{BLGGT}, and is very important in
automorphy lifting theorems. It is expected (\cite[Conj.~A.3]{emertongeerefinedBM}) that every $\rbar$ admits a
potentially diagonalisable lift of regular weight\footnote{Recall that a de Rham representation of $G_K$ is said to have
\emph{regular} weight if for any continuous embedding $K\into\Qpbar$, the corresponding Hodge--Tate
weights are all distinct.}, but this is at present known 
only if $n\le 3$ or $\rbar$ is semisimple; 
see for
example~\cite[Lem.~2.2]{CEGGPSBreuilSchneider}, and the proof
of~\cite[Prop.\ 2.5.7]{MullerThesis} for the case $n=3$.  It seems
plausible that these arguments could be extended to cover other small dimensions, but the case of general~$n$
seems to be surprisingly difficult.

We recall that an $n$-dimensional de Rham representation of $G_K$ is said to have Hodge
type $0$ if  for any continuous embedding $K\into\Qpbar$ the corresponding Hodge--Tate
weights are $0,1,\ldots,n-1$; while if $K/\Qp$ is unramified, a
crystalline representation of $G_K$ is said to be
Fontaine--Laffaille if for each continuous embedding $K\into\Qpbar$ the
corresponding Hodge--Tate weights are all contained in an
interval of the form $[i,i+p-2]$. 
We remark that we will normalise Hodge--Tate
weights so that the cyclotomic character $\cyclo$ has Hodge--Tate
weight $-1$.

 Our first result is the following theorem, 
which will be used
in forthcoming work of Arias de Reyna and Dieulefait. 

\begin{ithm}\label{thmA}
\emph{(Cor.~\ref{cor:ld-sadr})} Suppose that
$K/\Qp$ is unramified, and fix an integer $n\ge 1$. Then there is a
finite extension $K'/K$, depending only on $n$ and $K$, with the following
property: if $\rbar:G_K\to\GL_n(\Fpbar)$ has a
Fontaine--Laffaille lift, then it also has a potentially
diagonalisable lift $r:G_K\to\GL_n(\Zpbar)$ of Hodge type $0$ with the property that
$r|_{G_{K'}}$ is crystalline.
\end{ithm}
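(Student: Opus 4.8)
The plan is to build the potentially diagonalisable lift by a two-step strategy: first produce an explicit potentially crystalline lift with small, highly structured inertial type (a sum of characters), and then check that the resulting lift is genuinely potentially diagonalisable by exhibiting, after restriction to a fixed finite extension $K'/K$, a crystalline lift whose Weil--Deligne representation is diagonalisable (i.e.\ a sum of crystalline characters). The extension $K'$ should be chosen at the outset depending only on $n$ and $K$: concretely, take $K'/K$ large enough that every tamely ramified character of $G_K$ of order dividing some fixed bound (a bound depending only on $n$, since the relevant inertial types on an $n$-dimensional space involve characters of bounded order) becomes unramified after restriction to $G_{K'}$. One should also arrange that the residue field of $K'$ is large enough to contain all the relevant eigenvalues, and that $K'/K$ is Galois; all of this is a finite condition in terms of $n$ and $K$.

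First I would reduce to a convenient normal form for $\rbar$. Since $K/\Qp$ is unramified and $\rbar$ is Fontaine--Laffaille, one has good control of $\rbar|_{I_K}$: after a harmless unramified base change we may assume $\rbar|_{I_K}$ is a successive extension (indeed, by Fontaine--Laffaille theory, a direct sum up to semisimplification) of tame characters of $I_K$ whose Hodge--Tate-weight data lie in an interval of length $\le p-2$. The key input here is that the Fontaine--Laffaille hypothesis rigidly pins down the restriction to inertia in terms of the Hodge--Tate weights, so one can match it against the inertial type of an explicitly constructed crystalline (or potentially crystalline) representation. In particular, one expects to write down, for each Jordan--H\"older constituent of $\rbar^{\mathrm{ss}}$, a crystalline character of $G_{K'}$ lifting it with prescribed Hodge--Tate weights in $\{0,1,\dots,n-1\}$, and then to lift the extension data by a cohomological argument (the obstructions to lifting lie in an $H^2$ which one kills by allowing oneself to twist and to enlarge the coefficient field, or which vanishes automatically in the relevant range because the weights are separated by less than $p-1$).

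The heart of the argument is the construction of the lift $r$ of Hodge type $0$ together with the verification of potential diagonalisability. I would proceed by building $r$ as an extension of crystalline characters over $G_{K'}$ that is stable (as a filtered module, or at the level of Breuil--Kisin modules) under the residual action of $\Gal(K'/K)$, so that it descends to a potentially crystalline lift $r$ of $\rbar$ over $G_K$; one then checks that, by construction, $r|_{G_{K'}}$ is crystalline. That $r$ is of Hodge type $0$ is arranged by choosing the Hodge--Tate weights of the constituent characters to be a permutation of $0,1,\dots,n-1$ for each embedding. For potential diagonalisability, the point is that $r|_{G_{K'}}$ is, by construction, either already a sum of crystalline characters or connected by an explicit path in the appropriate crystalline deformation ring to such a sum (both lie on the same irreducible component because they have the same reduction and the same Hodge type, and the ordinary/sum-of-characters locus is known to be a union of components in this setting); this is the definition of potential diagonalisability. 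The main obstacle I anticipate is the descent step: ensuring that the crystalline lift over $G_{K'}$ can be chosen $\Gal(K'/K)$-equivariantly so that it actually descends to a lift of $\rbar$ (and not merely of $\rbar|_{G_{K'}}$), which requires carefully tracking the Galois action on the extension classes and possibly enlarging $K'$ further (still only in terms of $n$ and $K$) to trivialise a Brauer-type obstruction. A secondary technical point is bookkeeping with Hodge--Tate weights across the various embeddings $K\into\Qpbar$ so that the final lift is uniformly of Hodge type $0$ while remaining compatible with the Fontaine--Laffaille weights of $\rbar$; this is where the hypothesis that $K/\Qp$ is unramified is used essentially, since it makes the Fontaine--Laffaille constraint clean.
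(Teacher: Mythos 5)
Your overall architecture (lift the Jordan--H\"older constituents as inductions or sums of crystalline characters becoming crystalline over a fixed $K'$, then lift the extension classes by Galois cohomology using the freedom to twist by unramified characters) matches the paper's, but there is a genuine gap at the decisive step, namely your treatment of the $H^2$ obstruction. You assert that the obstruction to lifting the extension data is ``killed by allowing oneself to twist and to enlarge the coefficient field, or vanishes automatically in the relevant range because the weights are separated by less than $p-1$.'' Neither alternative is correct. What a generic small unramified twist $\psi$ buys is $(\Hom_{\cO}(U,V_\ell(1))\otimes_{\cO}\cO(\psi))^{G_K}=0$; but the group dual to the obstruction space is $H^0(G_K,\Hom_{\cO}(U,V_\ell(1)\otimes_{\cO}\psi)\otimes F/\cO)$, which is then finite but nonzero whenever $H^0(G_K,\Hom(\Ubar,\Vbar_\ell(1)))\neq 0$, i.e.\ whenever some earlier graded piece is isomorphic to a cyclotomic twist of $\Vbar_\ell$ --- a situation that does occur for Fontaine--Laffaille $\rbar$, e.g.\ $\rbar\cong\left(\begin{smallmatrix}\omega&*\\0&1\end{smallmatrix}\right)$. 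Proposition~\ref{prop:unram-image} of the paper shows that the image of the dual connecting map is then exactly $H^1_{\ur}$, so the mod~$p$ extension class lifts if and only if it is annihilated under Tate duality by the unramified classes: this is the peu ramifi\'ee condition of Definition~\ref{def:peu-ramif-fil}, and it genuinely can fail (tr\`es ramifi\'ee extensions of $\omega^{-1}$ by the trivial character do not lift to extensions of unramified twists of $\cyclo^{-1}$ by the trivial character). Your proposal contains no argument that Fontaine--Laffaille representations satisfy this condition; the paper needs a separate, non-cohomological input here, namely the explicit construction (Proposition~\ref{prop:fllift}) of Fontaine--Laffaille lattices realizing arbitrary unramified twists of the graded pieces, which via Proposition~\ref{prop:pr-htl} shows that Fontaine--Laffaille representations are peu ramifi\'ee. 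Using the Fontaine--Laffaille hypothesis only to pin down $\rbar|_{I_K}$ cannot suffice, since the obstruction concerns the extension classes, not the restriction to inertia.

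Two smaller points. The descent step you worry about does not arise: the paper constructs the lift directly over $G_K$ (the irreducible pieces are lifted as inductions $\Ind_{G_{K_d}}^{G_K}$ of crystalline characters times Teichm\"uller twists, already defined over $K$, as in Lemma~\ref{lem:existence of a bounded lift for an irreducible rep over an unramified field}), so no $\Gal(K'/K)$-equivariance or Brauer-type obstruction needs to be handled. And your justification of potential diagonalisability --- same reduction and same Hodge type forces the same irreducible component --- is false in general; that is essentially the content of the Breuil--M\'ezard conjecture. The correct argument is that the lift is a successive extension, with suitably increasing Hodge--Tate weights, of representations that become direct sums of crystalline characters over $K'$, and one invokes \cite[Lem.~1.4.3]{BLGGT}.
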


In fact this is a special case of a result
(Cor.~\ref{cor: peu ramimplies HT type whatever and bounded tame
  type}) that holds for a
more general class of representations $\rbar$ that we call \emph{peu
  ramifi\'ee}, and with no assumption that the finite extension
$K/\Qp$ is unramified.  We expect that this result should even be true
without the assumption that $\rbar$ is peu ramifi\'ee,
but we do not
know how to prove this; indeed, as mentioned above, we do not know how
to produce a single de Rham lift in general!

To explain why this result is reasonable, and to give some indication
of the proof, we focus on the case that~$K=\Qp$ and~$n=2$. Assume for
simplicity in the following discussion that $p>2$.  One way to
see that we should expect the result to be true (at least if we remove
``potentially diagonalisable'' from the statement) is that it is then the local Galois
analogue of the well-known statement that every modular eigenform of
level prime to~$p$ is congruent to one of weight~$2$ and bounded level
at~$p$. Indeed, via the mechanism of modularity lifting theorems and
potential modularity, it is possible to turn this analogy into a proof.
(See Theorem~\ref{thmC} below. Since all potentially Barsotti--Tate
representations are known to be potentially diagonalisable, this
literally proves Theorem~\ref{thmA} in this case, but this deduction
cannot be made if~$n>2$.)

Since these global methods are (at least at present) unable to handle
the case $n>2$, a local approach is needed, which we again motivate via
the case $K=\Qp$ and $n=2$. 
The
possible~$\rbar:G_{\Qp}\to\GL_2(\Fpbar)$ are well-understood; they are
either irreducible representations, in which case they are induced
from characters of the unramified quadratic extension~$\Q_{p^2}$
of~$\Qp$, or they are reducible, and are extensions of unramified
twists of powers of the mod~$p$ cyclotomic character~$\omega$.

In the first case, the representations are induced from characters
of~$G_{\Q_{p^2}}$ which become unramified after restriction to any
totally ramified extension of degree $p^2-1$,
and it is
straightforward to produce the required lifts by considering
inductions of potentially crystalline characters of~$G_{\Q_{p^2}}$
which become crystalline over such an extension; see Lemma~\ref{lem:existence of a bounded lift for an irreducible rep over
    an unramified field}. Such representations are automatically
  potentially diagonalisable, as after restriction to some finite
  extension they are even a direct sum of crystalline characters.

This leaves the case that~$\rbar$ is reducible. After twisting, we may
assume 
that~$\rbar$ is an extension of an
unramified twist of~$\omega^{-i}$ by the trivial character, for some
$0\le i\le p-2$. 
Then the
natural way to lift to characteristic zero and Hodge type $0$ is to
try to lift to an extension of an unramified
twist of~$\cyclo^{-1} \widetilde{\omega}^{1-i}$ by the trivial character, where~$\widetilde{\omega}$ is the
Teichm\"uller lift of~$\omega$; this is promising because
any such extension is at least potentially semistable, and becomes
semistable over~$\Qp(\zeta_p)$ (which is in particular independent of
the specific reducible~$\rbar$ under consideration), and if it is
potentially crystalline, then it is also potentially diagonalisable
(as it is known that any successive extension of characters which is
potentially crystalline is also potentially diagonalisable).

The problem of producing such lifts is one of Galois cohomology, and
Tate's duality theorems show that when~$i\ne 1$ there is no
obstruction to lifting.\footnote{%
  This is true
  even for the ramified
  self-extensions of the trivial character in the case $i=0$, which are not
  Fontaine--Laffaille, although they are peu ramifi\'ee in the sense
  of this paper (Definition~\ref{def:peu-ramif}).}
 It is also easy to check that in this case  the
lifts are automatically potentially crystalline. However, when~$i=1$ the
situation is more complicated.
 Then one can check that
tr\`es ramifi\'ee extensions of $\omega^{-1}$ by the trivial character do
not lift to extensions of a non-trivial
unramified twist of~$\cyclo^{-1}$ by the trivial character, but only lift to
semistable non-crystalline extensions of $\cyclo^{-1}$ by  the trivial character. However, this is the only obstruction to carrying out
the strategy in this case; and in fact, since tr\`es ramifi\'ee representations
do not have Fontaine--Laffaille lifts, the result also follows in the
case~$i=1$.

We prove Theorem~\ref{thmA} by a generalisation of this strategy:\ we
write~$\rbar$ as an extension of irreducible representations, lift the
irreducible representations as inductions of crystalline characters,
and then lift the extension classes. However, the issues that arose in
the previous paragraph in the case~$i=1$ are more complicated
in general. 
To
address this, we make use of the following observation:\ in the case
considered in the previous paragraphs (that is, $K=\Qp$, $n=2$,
and~$\rbar$ has a trivial subrepresentation),
if $\rbar$ is not tr\`es ramifi\'ee then it admits ``many'' reducible
crystalline lifts; indeed,
it can be lifted as an extension by the trivial character of
\emph{any} unramified twist of~$\cyclo^{-i}$ that lifts the
corresponding character mod~$p$.

This freedom to twist by unramified characters is in marked contrast
to the behaviour in the tr\`es ramifi\'ee case, and can be exploited in the
Galois cohomology calculations used to produce the potentially
crystalline lifts of Hodge type~$0$. Motivated by these observations, we
introduce a generalisation (Definition~\ref{def:peu-ramif}) of the
classical notion of peu ramifi\'ee representations, and we prove by
direct Galois cohomology arguments that the peu ramifi\'ee condition
allows great flexibility in the production of lifts to
varying reducible representations (see Theorem~\ref{thm: existence of lifts, prescribed on inertia, for
    peu ramifiee reps} and Corollary~\ref{cor: peu ramimplies HT type whatever and bounded tame type}).

Conversely, every representation that
  admits enough lifts of the sort promised by Theorem~\ref{thm: existence of lifts, prescribed on inertia, for
    peu ramifiee reps} must in fact be peu ramifi\'ee (see
  Proposition~\ref{prop:pr-htl} for a precise statement); such a
  representation is said to admit ``highly twisted lifts.'' 
  We show
that representations that admit 
Fontaine--Laffaille lifts also
admit highly twisted lifts
(Proposition~\ref{prop:fllift}), and so deduce that Corollary~\ref{cor: peu ramimplies HT type whatever and bounded tame type} applies whenever the residual representation is
  Fontaine--Laffaille.  Theorem~\ref{thmA} follows.

Using roughly the same purely local methods, we additionally prove the
following.

\begin{ithm}\label{ithm:crys-lift}\emph{(Cor.\ \ref{cor:crys-lift-of-serre-weight})}
    Suppose that $\rbar: G_K
    \to \GL_n(\Fpbar)$ is peu ramifi\'ee. Then~$\rbar$ has a
    crystalline lift of some Serre weight {\upshape(}in the sense of Section~\ref{sec:cryst-lifts-font}{\upshape)}.
\end{ithm}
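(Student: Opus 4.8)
The plan is to reduce Theorem~\ref{ithm:crys-lift} to Corollary~\ref{cor: peu ramimplies HT type whatever and bounded tame type} (equivalently to Theorem~\ref{thm: existence of lifts, prescribed on inertia, for peu ramifiee reps}), which already produces potentially crystalline lifts of peu ramifi\'ee representations whose inertial type and Hodge--Tate weights are prescribed within the range allowed there. Since we are free to choose the type, the strategy is to arrange that the prescribed inertial type \emph{disappears}---i.e. is trivial on inertia---so that the lift one obtains is genuinely crystalline (not merely potentially crystalline), and then to check that the resulting Hodge--Tate weights assemble into a Serre weight in the sense of Section~\ref{sec:cryst-lifts-font}.

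First I would recall the structure of the proof of Theorem~\ref{thm: existence of lifts, prescribed on inertia, for peu ramifiee reps}: one writes $\rbar$ as a successive extension of irreducible pieces $\rbar_i$, lifts each irreducible piece to an induction of a crystalline character (Lemma~\ref{lem:existence of a bounded lift for an irreducible rep over an unramified field} and its relatives), and then lifts the extension classes by Galois cohomology, using the ``highly twisted'' flexibility that the peu ramifi\'ee hypothesis guarantees (via Proposition~\ref{prop:pr-htl}). The key point is that for an \emph{irreducible} $\rbar_i: G_K \to \GL_{n_i}(\Fpbar)$ one can always choose a crystalline lift: $\rbar_i$ is induced from a character of $G_L$ for $L/K$ the unramified extension of degree $n_i$, and a character of $G_L$ always admits a crystalline lift of any prescribed Hodge--Tate weights (lift the tame part by a power of the Lubin--Tate / cyclotomic character composed with a suitable unramified twist). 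Inducing such a crystalline character of $G_L$ up to $G_K$ yields a crystalline representation whenever $L/K$ is unramified, so each irreducible constituent has an \emph{honestly crystalline} lift with freely chosen Hodge--Tate weights in a suitable range.

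Next I would run the extension-lifting step with the trivial inertial type throughout. Here one needs the Galois cohomology obstruction arguments in the proof of Theorem~\ref{thm: existence of lifts, prescribed on inertia, for peu ramifiee reps} to go through when the target is a successive extension of crystalline characters rather than merely potentially crystalline ones; this is exactly where the peu ramifi\'ee hypothesis is used to gain enough unramified-twisting freedom to kill the relevant $H^2$ obstructions (cf.\ the $i=1$ discussion in the introduction, and Proposition~\ref{prop:pr-htl}). One then checks Hodge--Tate--weight bookkeeping: choosing the weights of the crystalline characters lifting the irreducible constituents in a ``dominant'' / decreasing pattern compatible across all embeddings $K \hookrightarrow \Qpbar$ ensures that the total collection of Hodge--Tate weights of the resulting lift $r$, read off embedding by embedding, is a tuple that qualifies as a Serre weight in the sense of Section~\ref{sec:cryst-lifts-font}. (If that section's notion of Serre weight imposes a boundedness condition of Fontaine--Laffaille type, one simply takes the weights of the irreducible pieces to lie in a small window; the peu ramifi\'ee hypothesis is not tied to any particular weight choice.)

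The main obstacle I expect is the extension-lifting step in the \emph{crystalline} (type-free) regime: in Theorem~\ref{thm: existence of lifts, prescribed on inertia, for peu ramifiee reps} one has the luxury of allowing a nontrivial tame inertial type, which enlarges the target deformation problem and can make obstruction classes vanish; insisting on the trivial type removes that slack, and one must verify that the peu ramifi\'ee hypothesis alone---via the highly-twisted-lifts mechanism of Proposition~\ref{prop:pr-htl}---still suffices to clear all the $H^2$ obstructions that arise in stitching the crystalline constituents together. Concretely, when two adjacent irreducible constituents $\rbar_i$, $\rbar_j$ have lifts whose relevant character ratio is an unramified twist of $\cyclo^{-1}$, the extension may be forced to be semistable non-crystalline unless the extension class is itself suitably controlled---this is the analogue of the $i=1$ phenomenon---and the peu ramifi\'ee condition is precisely what rules out the bad (tr\`es ramifi\'ee) extension classes that would obstruct crystallinity. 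Once this is in place, the assembled lift is crystalline of the desired Serre weight, and moreover (being a successive extension of crystalline characters) potentially diagonalisable, though only crystallinity is claimed in the statement.
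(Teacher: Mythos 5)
Your overall architecture matches the paper's: lift the irreducible graded pieces of the filtration to crystalline representations, stagger their Hodge--Tate weights, and then invoke the peu ramifi\'ee lifting machinery (Proposition~\ref{prop: existence of lifts, prescribed on inertia, for peu ram}, with $K'=K$) to glue them into a crystalline lift. Your worry about the extension step is also resolved exactly as in the paper: the condition that every element of $\HT_\kappa(V_{i+1})$ exceed every element of $\HT_\kappa(V_i)$ forces the extensions to be semistable, and the unramified-twisting freedom lets one avoid the finitely many twists for which a quotient $\cyclo$ could appear, so no extra slack from a nontrivial inertial type is needed.

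The genuine gap is in your treatment of the irreducible constituents. You assert that a character of $G_L$ ($L/K$ unramified) ``always admits a crystalline lift of any prescribed Hodge--Tate weights,'' and hence that each irreducible $\Vbar_i$ has an honestly crystalline lift with freely chosen weights. This is false: the reduction mod $p$ of a crystalline character of $G_L$ is determined on $I_L$ by its Hodge--Tate weights, so once you demand that the lift reduce to the given residual character (up to unramified twist), the weights are constrained modulo the relevant fundamental-character relations. You cannot simultaneously prescribe the weights arbitrarily and hit a given $\psibar$; the device in Lemma~\ref{lem:existence of a bounded lift for an irreducible rep over an unramified field} that buys free choice of weights is a \emph{Teichm\"uller} (tamely ramified, not unramified) twist, which is precisely why that lemma only yields potentially crystalline lifts. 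For a genuinely crystalline lift whose Hodge type is moreover a lift of a $p$-restricted tuple $a$ (which is what ``of some Serre weight'' requires in Section~\ref{sec:cryst-lifts-font} --- note also that for ramified $K$ all but one embedding per $\kappabar$ must carry $\lambda_{\kappa'}=0$), one needs the nontrivial combinatorial result that every irreducible $\rbar_i$ admits a crystalline lift of some Serre weight; the paper gets this from \cite[Thm~B.1.1]{EGHS} rather than from any free choice of weights. Relatedly, your final bookkeeping step --- staggering the blocks while keeping the total Hodge type a lift of a $p$-restricted weight --- is in tension with itself and is not ``simply taking the weights in a small window''; it is handled by twisting each $V_i$ by a crystalline character of \emph{trivial reduction} chosen so that the direct sum is again of some Serre weight, again following the argument of \cite[Thm~B.1.1]{EGHS}. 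With the irreducible case supplied by that reference, the rest of your argument goes through.
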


In contrast to these relatively concrete local arguments, in Section~\ref{sec:global lifts} we use global methods,
and in particular the
potential automorphy machinery of~\cite{BLGGT}. Our first result is
the following, which takes as input a potentially crystalline lift that
could have highly ramified inertial type, or highly spread out
Hodge--Tate weights, and produces a crystalline lift of small
Hodge--Tate weights.

\begin{ithm}\label{thmC} \emph{(Thm.\ \ref{thm:pot diag implies serre weight})}
  Suppose that $p\nmid 2n$, and that $\rbar : G_K \to \GL_n(\Fpbar)$ has a potentially diagonalisable
  lift of some regular weight. Then the following hold.
  \begin{enumerate}

  \item There exists a finite extension $K'/K$ {\upshape(}depending only on $n$
    and $K$, and not on $\rbar${\upshape)} such that $\rbar$ has a lift $r : G_K
    \to \GL_n(\Zpbar)$ of Hodge type $0$ that becomes crystalline over $K'$.

\item  $\rbar$ has a crystalline lift of some
  Serre weight.

  \end{enumerate}
\end{ithm}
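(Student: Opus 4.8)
The plan is to prove Theorem~\ref{thmC} by feeding a potentially diagonalisable lift into the potential automorphy and change-of-weight machinery of~\cite{BLGGT}, and then reading off the desired local lift from a congruent automorphic representation of controlled level at $p$.

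\medskip

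First I would globalise. Given $\rbar : G_K \to \GL_n(\Fpbar)$ together with a potentially diagonalisable lift $r$ of some regular weight, I would invoke the standard potential automorphy results (as in~\cite{BLGGT}, Theorem~\ref{thmC}'s cited source) to find a number field $F^+$ with a prime $v\mid p$ such that $F^+_v \cong K$, a CM extension $F/F^+$, and a continuous representation $\barrho : G_{F^+} \to \mathcal{G}_n(\Fpbar)$ (or a polarisable $\barrho: G_F \to \GL_n(\Fpbar)$) which is automorphic, which satisfies the usual ``big image'' / adequacy hypotheses needed to apply automorphy lifting, and whose restriction to a decomposition group at $v$ is $\rbar$ (this uses $p \nmid 2n$, which guarantees adequacy of the relevant image and the applicability of the Taylor--Wiles--Kisin method). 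The potential diagonalisability of $r$ at $v$, together with the choice of a potentially diagonalisable (indeed crystalline of the same Hodge type) lift at all the auxiliary places, ensures that the global lifting problem sits in a single potentially diagonalisable component, so that the automorphy lifting theorems of~\cite{BLGGT} apply.

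\medskip

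Next I would change weight. Having exhibited a global $\barrho$ that is automorphic and has the prescribed potentially diagonalisable (regular-weight) behaviour at $v$, I would apply the weight-change results of~\cite{BLGGT} to replace it by a congruent automorphic representation whose weight at $v$ is the one corresponding to Hodge type $0$, and which is crystalline at $v$ after restriction to a fixed finite extension $K'/K$ depending only on $n$ and $K$ — this is exactly the statement that a suitable potentially diagonalisable lift of Hodge type $0$ (respectively a crystalline lift of a Serre weight) exists globally, because one is allowed to vary the lift at $v$ within the potentially diagonalisable locus while keeping the automorphic form, so long as one lifts at $v$ to whatever potentially crystalline type one wants. For part~(2) one instead changes the weight at $v$ so that the lift is crystalline of a Serre weight (in the sense of Section~\ref{sec:cryst-lifts-font}); the combinatorics of which Serre weights are reachable is handled exactly as in the global arguments of~\cite{BLGGT} and does not depend on $\rbar$ beyond its restriction to inertia. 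Restricting the resulting global lift to $G_{F^+_v} = G_K$ gives the asserted local lift $r : G_K \to \GL_n(\Zpbar)$.

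\medskip

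The main obstacle is verifying the hypotheses needed to apply the automorphy lifting theorems of~\cite{BLGGT}: one must arrange the globalisation so that the image of $\barrho$ is adequate (enormous), so that $\barrho$ is automorphic of a suitable regular weight, and so that the auxiliary local conditions (at ramified primes away from $p$, at $\infty$, etc.) are all potentially diagonalisable — all while pinning down the local behaviour at $v$ to be $\rbar$ with the prescribed potentially diagonalisable lift. The condition $p \nmid 2n$ is what makes the adequacy and Taylor--Wiles prime arguments go through, and the hypothesis that $r$ is potentially diagonalisable is exactly what allows one to land in a component of the local deformation ring that the global method can access. Once the globalisation and the ``automorphy of $\barrho$'' step are in place, the passage to Hodge type $0$ and then to a Serre weight is a routine application of the change-of-weight results, and the uniformity of $K'$ in $n$ and $K$ follows because the weight change only requires becoming crystalline over a bounded extension determined by the target Hodge and inertial types.
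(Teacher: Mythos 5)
Your overall strategy for part (2) matches the paper's: globalise $\rbar$ using the potential automorphy results (via the appendix to \cite{emertongeerefinedBM}), observe that the nonvanishing of $S_{\lambda_{\rbar},\tau_{\rbar}}(U,\Zpbar)_{\m}$ forces $S(U,F)_{\m}\neq 0$ for some Serre weight $F$, lift $F$ to a crystalline Hodge type, and read off a crystalline lift of a Serre weight by local-global compatibility. That part of your sketch, once fleshed out, is essentially the paper's argument.

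For part (1), however, there is a genuine gap. You describe the passage to a Hodge type $0$ lift that is crystalline over a uniform $K'$ as ``a routine application of the change-of-weight results,'' on the grounds that ``one is allowed to vary the lift at $v$ within the potentially diagonalisable locus \dots so long as one lifts at $v$ to whatever potentially crystalline type one wants.'' This is circular: the change-of-weight theorems of \cite{BLGGT} take as \emph{input} the existence of a potentially diagonalisable local lift of the target type, and the entire content of part (1) is to produce a potentially crystalline lift of Hodge type $0$ whose inertial type is trivialised over an extension $K'$ depending only on $n$ and $K$. No such local lift is known in advance, so the machinery you invoke has nothing to act on. The paper gets around this by working on the automorphic side: starting from $S(U,V)_{\m}\neq 0$ for an irreducible mod $p$ representation $V$, it restricts $V_v$ to the Iwahori, extracts a character $\chibar_v$, lifts its diagonal part to \emph{pairwise distinct} smooth characters $\chi_{i,v}$ trivial on $1+\varpi^s\cO$ with $q^{s-1}\ge n$, and uses the theory of principal series types (\cite{MR1621409}) plus Deligne--Serre lifting to produce an automorphic $\pi$ with $\pi_\infty\cong\mathbf{1}$ and $\pi_v$ an \emph{irreducible} principal series (irreducibility via Bernstein--Zelevinsky, which is what guarantees $N=0$, i.e.\ that the lift is potentially crystalline rather than merely potentially semistable). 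The uniformity of $K'$ then comes from a class field theory bound (Lemma~\ref{lem:localfields}) on the conductor of the $\chi_{i,v}$, not from any general feature of the change-of-weight method. Your sketch addresses neither the construction of an inertial type of bounded conductor nor the vanishing of the monodromy operator, and both are essential.
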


The first part of this result should be contrasted with
Theorem~\ref{thmA} above, while the second part should be contrasted with
Theorem~\ref{ithm:crys-lift}. For instance, we remark that it follows from
Theorem~\ref{thmA} (or more precisely, from its more general statement
for peu ramifi\'ee representations) that every peu ramifi\'ee
representation~$\rbar$ admits a potentially diagonalisable lift of some
regular weight, whereas this latter condition on $\rbar$  is an input to Theorem~\ref{thmC}.

If $K/\Qp$ is unramified and $\rbar$ admits a lift of extended
FL weight (see Section~\ref{sec:cryst-lifts-font} for
this terminology), we also show the following ``weak Breuil--M\'ezard
result''.

\begin{ithm}\label{thmD}
  \emph{(Thm.\ \ref{thm:FL lift implies lots more lifts})} Suppose
  that $p\ne n$, that $K/\Qp$ is
  unramified, and that $\rbar:  G_K \to \GL_n(\Fpbar)$ has a crystalline lift
  of some extended FL weight~$F$. 
  If $F$ is a
  Jordan--H\"older factor of~$\sigmabar(\lambda,\tau)$ for some $\lambda$,
  $\tau$, then $\rbar$ has a potentially crystalline lift of type~$(\lambda,\tau)$.
\end{ithm}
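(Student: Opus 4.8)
The plan is to reduce the assertion to the non-vanishing of a space of algebraic automorphic forms on a definite unitary group, using the potential automorphy machinery of~\cite{BLGGT} as in Section~\ref{sec:global lifts}; this realises the ``weak'' half of the Breuil--M\'ezard formalism of~\cite{emertongeerefinedBM}. The starting point is that a crystalline lift $r_F\colon G_K\to\GL_n(\Zpbar)$ of extended FL weight~$F$ is potentially diagonalisable (essentially Fontaine--Laffaille theory, valid because $K/\Qp$ is unramified; this is one reason the notion of extended FL weight is the right one, cf.\ Section~\ref{sec:cryst-lifts-font}), so $\rbar$ has a potentially diagonalisable lift of regular weight. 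Using the results of~\cite{BLGGT} --- globalising $r_F$ to a polarised, odd Galois representation of an imaginary CM field, potentially diagonalisable at all places above~$p$, hence potentially automorphic, and arranging by a solvable CM base change that the chosen place $v\mid p$ splits completely and that the requisite genericity holds --- I would obtain: an imaginary CM field $F/F^+$, a place $v\mid p$ with $F^+_v=K$, an absolutely irreducible $\rhobar\colon G_{F^+}\to\GL_n(\Fpbar)$ with $\rhobar|_{G_{F^+_v}}\cong\rbar$ and tame, liftable local behaviour elsewhere, and an automorphic representation of the appropriate polarised type whose associated Galois representation reduces to $\rhobar$ and restricts to $r_F$ on $G_{F^+_v}=G_K$. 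As $r_F$ is crystalline of weight~$F$, this makes $\rhobar$ automorphic of Serre weight~$F$ at~$v$: if $S(-)$ denotes the space of algebraic automorphic forms on a suitable definite unitary group $G_{/F^+}$ with $G(F^+_v)\cong\GL_n(K)$, and $\frakm$ is the maximal ideal of its Hecke algebra cut out by $\rhobar$, then $S(F)_{\frakm}\ne0$.

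The main step is the change of type. Fix a $\GL_n(\cO_K)$-stable $\cO$-lattice $\sigma^\circ$ in $\sigma(\lambda,\tau)$; the Jordan--H\"older factors of its reduction $\overline{\sigma^\circ}$ are exactly those of $\sigmabar(\lambda,\tau)$, independently of the lattice. For a definite group and a sufficiently small tame level the functor $M\mapsto S(M)$ is exact and $S(\sigma^\circ)$ is $\varpi$-torsion free, so $S(\sigma^\circ)_{\frakm}\ne0$ if and only if $S(\sigma^\circ)_{\frakm}/\varpi\cong S(\overline{\sigma^\circ})_{\frakm}\ne0$. By hypothesis $F$ is a Jordan--H\"older factor of $\sigmabar(\lambda,\tau)$, hence of $\overline{\sigma^\circ}$; filtering $\overline{\sigma^\circ}$ and applying the exact localised functor $S(-)_{\frakm}$ exhibits $S(F)_{\frakm}$ as a subquotient of $S(\overline{\sigma^\circ})_{\frakm}$, so the previous paragraph forces $S(\overline{\sigma^\circ})_{\frakm}\ne0$, whence $S(\sigma^\circ)_{\frakm}\ne0$. (One could package the same argument through a patched module $M_\infty$, projective over $\cO[[\GL_n(\cO_K)]]$, and the potentially crystalline deformation ring of type $(\lambda,\tau)$, which is then nonzero.)

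To finish, choose a characteristic-zero Hecke eigenclass in $S(\sigma^\circ)_{\frakm}$ and let $\pi$ be an associated automorphic representation; its Galois representation $r_\pi\colon G_{F^+}\to\GL_n(\Zpbar)$ has $\overline{r_\pi}\cong\rhobar$, and by local--global compatibility at~$v$ the restriction $r_\pi|_{G_{F^+_v}}=r_\pi|_{G_K}$ is potentially crystalline of Hodge type~$\lambda$ and inertial type~$\tau$. Since $\overline{r_\pi}|_{G_K}=\rhobar|_{G_{F^+_v}}=\rbar$, this restriction is the potentially crystalline lift of $\rbar$ of type $(\lambda,\tau)$ that we want.

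I expect the crux to be the globalisation, and in particular the passage from ``$\rbar$ has a crystalline lift of weight~$F$'' to ``$\rhobar$ is automorphic of Serre weight~$F$'': one must globalise $r_F$ to a polarised, odd, everywhere-above-$p$ potentially diagonalisable Galois representation over a CM field with the prescribed completion $F^+_v=K$, ensure its potential automorphy can be arranged with $v$ split in the base change (so that $r_F$ genuinely records the local automorphic behaviour), and then read off $S(F)_{\frakm}\ne0$; the hypothesis that $K/\Qp$ is unramified is built into the notion of extended FL weight and into the potential diagonalisability of $r_F$. A secondary issue is the input used at the last step: the existence of Galois representations attached to the relevant automorphic forms and local--global compatibility at~$v$ for general~$n$, together with the genericity and adequacy conditions on $\rhobar$ needed to apply~\cite{BLGGT}, which is presumably what forces $p\ne n$. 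The d\'evissage on $\overline{\sigma^\circ}$ and the extraction of a $\Zpbar$-point from the non-vanishing are then routine.
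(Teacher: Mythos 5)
Your proposal follows essentially the same route as the paper: potential diagonalisability of the extended FL lift (the paper cites the main result of \cite{GaoLiu12} for this), globalisation via \cite[App.~A]{emertongeerefinedBM}, non-vanishing of the Serre-weight space $S(F)_{\frakm}$, the standard torsion-free/exactness d\'evissage to change type (this is exactly Lemma~\ref{lem:char 0 versus char p}(2) in the paper), and local--global compatibility to extract the lift. Two points that you elide deserve attention.

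First, the passage from ``$\rbar$ has a crystalline lift of weight $F$'' to ``$S(F)_{\frakm}\ne 0$'' is not automatic: what the globalisation gives you directly is $S_{\lambda',1}(U,\Zpbar)_{\frakm}\ne 0$, where $\lambda'$ is the lift of $a$ with $F\cong F_a$, and Lemma~\ref{lem:char 0 versus char p}(2) then only tells you that \emph{some} Jordan--H\"older factor of $\sigmabar(\lambda',1)=L_{\lambda'}\otimes\Fpbar$ occurs in the mod-$p$ space. To conclude that this factor is $F_a$ itself you need $L_{\lambda'}\otimes\Fpbar\cong F_a$ to be irreducible, which is exactly where the extended FL bound $a_{\kappabar,1}-a_{\kappabar,n}\le p-n\le p-(n-1)$ enters (cf.\ \S\ref{sec:repr-gl_n-serre}); so the extended FL hypothesis is used twice, once for potential diagonalisability and once here. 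Second, the global machinery of Section~\ref{sec:global lifts} requires $p\nmid 2n$, whereas the theorem only assumes $p\ne n$; the paper observes that the hypotheses force $p>n$, so the only residual case is $p=2$, $n=1$, which must be (and is) treated separately by local class field theory. Your argument as written does not cover that case.
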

 Since there is no restriction on~$\lambda$ or~$\tau$, this result seems
to be well beyond anything that can currently be proved directly using integral $p$-adic
Hodge theory.

If we knew that all potentially crystalline lifts were potentially
diagonalisable, then the special case of Theorem~\ref{thmA} in which the given  Fontaine--Laffaille lift is
regular would be an easy
consequence of part (1) of Theorem~\ref{thmC} (note that the existence of a
regular Fontaine--Laffaille lift implies that  $p > n$). However, we do
not know how to prove that general potentially crystalline representations are
potentially diagonalisable (and we do not have any strong
evidence that it should be true). 

\subsection{Acknowledgements}We would like to thank Luis Dieulefait for asking a
question which led to us writing this paper, as well as Alain Muller
for valuable discussions.

\subsection{Notation and conventions}\label{sec:notation} 
Fix a prime~$p$, and let~$K/\Qp$ be a finite extension with ring of
integers $\cO_K$. Write~$G_K$ for the
absolute Galois group of~$K$, $I_K$ for the inertia subgroup
of~$G_K$, and~$\Frob_K \in G_K$ for a choice of geometric Frobenius.  All representations of $G_K$ are assumed without further
comment to be continuous.  Write $v_K$ for the $p$-adic valuation on
$K$ taking the value $1$ on a uniformiser of $K$, as well as
for the unique extension of this valuation to any algebraic extension
of~$K$.

\subsubsection{Inertial types}
\label{sec:inertial-types}
An \emph{inertial type} is   a representation
$\tau:I_K\to\GL_n(\Qpbar)$ with open kernel  which extends
to the Weil group $W_K$. We say that a de Rham representation
$r:G_K\to\GL_n(\Qpbar)$ has inertial type $\tau$ if the
restriction to $I_K$ of the Weil--Deligne representation $\WD(r)$ associated to
$r$ is equivalent to $\tau$. Given an inertial type $\tau$, there
is a (not necessarily unique) finite-dimensional smooth irreducible $\Qpbar$-representation
$\sigma(\tau)$ of $\GL_n(\cO_K)$ associated to $\tau$ by the
``inertial local Langlands correspondence'', which we normalise as
in~\cite[Conj.\ 4.1.3]{emertongeerefinedBM}. (Note that there is an unfortunate
difference in conventions between this and that of~\cite[Thm.\
4.1.5]{emertongeerefinedBM}, but it is this normalisation that is used
in the remainder of~\cite{emertongeerefinedBM}.) 
We can and do suppose that
$\sigma(\tau)$ is defined over~$\Zpbar$.

\subsubsection{Hodge--Tate weights and Hodge types}
\label{sec:hodge-tate}
If $W$ is a de Rham
representation of $G_K$ over $\Qpbar$, and  $\kappa:K \into \Qpbar$,
then we will write $\HT_\kappa(W)$ for the multiset of Hodge--Tate
weights of $W$ with respect to $\kappa$.  By definition, 
the multiset $\HT_\kappa(W)$ contains $i$ with multiplicity
$\dim_{\Qpbar} (W \otimes_{\kappa,K} \widehat{\overline{K}}(i))^{G_K}
$. Thus for example if $\cyclo$ denotes the $p$-adic cyclotomic character
of~$G_K$, then $\HT_\kappa(\cyclo)=\{ -1\}$ for all $\kappa$. 

We say that $W$ has \emph{regular} Hodge--Tate weights if for each
$\kappa$, the elements of $\HT_\kappa(W)$ are pairwise distinct. Let $\Z^n_+$
denote the set of tuples $(\lambda_1,\dots,\lambda_n)$ of integers
with $\lambda_1\ge \lambda_2\ge\dots\ge \lambda_n$. Then if $W$ has
regular Hodge--Tate weights, there is a unique
$\lambda=(\lambda_{\kappa,i})\in(\Z^n_+)^{\Hom_\Qp(K,\Qpbar)}$ such that for each $\kappa:K\into
\Qpbar$, \[\HT_{\kappa}(W)=\{\lambda_{\kappa,1}+n-1,\lambda_{\kappa,2}+n-2,\dots,\lambda_{\kappa,n}\},\]
and we say that $W$ is regular of \emph{Hodge type} $\lambda$.

\subsubsection{Representations of $\GL_n$ and Serre weights}
\label{sec:repr-gl_n-serre}
For any
$\lambda\in\Z^n_+$, view $\lambda$ as a dominant weight (with respect to the
upper triangular Borel subgroup) of the
algebraic group $\GL_{n}$ in the usual way, and let $M'_\lambda$
be the algebraic $\cO_K$-representation of $\GL_n$ given
by \[M'_\lambda:=\Ind_{B_n}^{\GL_n}(w_0\lambda)_{/\cO_K}\] where $B_n$
is the Borel subgroup of upper-triangular matrices of $\GL_n$, and
$w_0$ is the longest element of the Weyl group (see \cite{MR2015057}
for more details of these notions, and note that $M'_\lambda$ has
highest weight $\lambda$). Write $M_\lambda$ for the
$\cO_K$-representation of $\GL_n(\cO_K)$ obtained by evaluating
$M'_\lambda$ on $\cO_K$. For any $\lambda\in(\Z^n_+)^{\Hom_\Qp(K,\Qpbar)}$
we write $L_\lambda$ for the $\Zpbar$-representation of $\GL_n(\cO_K)$
defined by \[L_{\lambda}:=\otimes_{\kappa:K\into
  \Qpbar}M_{\lambda_\kappa}\otimes_{\cO_K,\kappa}\Zpbar.\]

Let $k$ be the residue field of~$K$. We call isomorphism classes of irreducible $\Fpbar$-representations of~$\GL_n(k)$ \emph{Serre weights};
they can be parameterised as follows. We say that an element $(a_i)$ of $\Z^n_+$
is \emph{$p$-restricted} if $p-1 \ge a_i - a_{i+1}$ for all $1\le i \le n-1$,
and we write $X_1^{(n)}$ for the set of $p$-restricted elements. Given any
$a\in X_1^{(n)}$, we define the $k$-representation $P_a$ of $\GL_n(k)$ to be
the representation obtained by evaluating $\Ind_{B_n}^{\GL_n}(w_0 a)_{/k}$ on
$k$, and let $N_a$ be the irreducible sub-$k$-representation of $P_a$ generated
by the highest weight vector (that this is indeed irreducible follows from the
analogous result for the algebraic group $\GL_n$, cf.\ II.2.2--II.2.6 in \cite{MR2015057},
and the appendix to \cite{herzigthesis}).

If $a = (a_{\kappabar,i}) \in (X_1^{(n)})^{\Hom(k,\Fpbar)}$, write $a_{\kappabar}$ for the
component of $a$ indexed by $\kappabar \in \Hom(k,\Fpbar)$. If $a\in (X_1^{(n)})^{\Hom(k,\Fpbar)}$ then we define an
irreducible $\Fpbar$-representation $F_a$ of $\GL_n(k)$
by \[F_a:=\otimes_{\kappabar\in\Hom(k,\Fpbar)}N_{a_{\kappabar}}\otimes_{k,\kappabar}\Fpbar.\]
The representations $F_a$ are irreducible,  and every Serre weight is
(isomorphic to one) of the
form $F_a$ for some $a$. The choice of~$a$ is not unique:\ one has
$F_a \cong F_{a'}$ if and only if there
exist integers $x_{\kappabar}$ such that $a_{\kappabar,i} - a'_{\kappabar,i} =
x_{\kappabar}$ for all $\kappabar,i$ and, for any
labeling $\kappabar_j$ of the elements of $\Hom(k,\Fpbar)$ such that $\kappabar_j^p =
\kappabar_{j+1}$ we have $\sum_{j=0}^{f-1} p^j x_{\kappabar_j} \equiv 0
\pmod{p^f-1}$, where $f = [k:\Fp]$.  In this case we write $a \sim a'$.

We remark that if $K/\Qp$ is unramified and $a\in (X_1^{(n)})^{\Hom(k,\Fpbar)}$ 
satisfies $a_{\kappabar,1}-a_{\kappabar,n}\le p-(n-1)$ for each~$\kappabar$, then
$L_a\otimes_{\Zpbar}\Fpbar \cong F_a$ as representations of $\GL_n(\cO_K)$. The reason
is that $P_b = N_b$ whenever $b \in \Z^n_+$ satisfies $b_1-b_n \le p-(n-1)$ (cf.\ \cite[II.5.6]{MR2015057}).

\subsubsection{Potentially crystalline representations 
}
\label{sec:cryst-lifts-font}
An element
  $\lambda\in(\Z^n_+)^{\Hom_{\Qp}(K,\Qpbar)}$ is said to be a \emph{lift} of an element
  $a\in(X_1^{(n)})^{\Hom(k,\Fpbar)}$ if for each $\kappabar \in \Hom(k,\Fpbar)$ there exists
  $\kappa_{\kappabar} \in \Hom_{\Qp}(K,\Qpbar)$ lifting $\kappabar$
  such that $\lambda_{\kappa_{\kappabar}}=a_{\kappabar}$, and $\lambda_{\kappa'} =
  0$ for all other $\kappa'\neq \kappa_{\kappabar}$
  in $\Hom_{\Qp}(K,\Qpbar)$ lifting $\kappabar$.  If $\lambda$ is a lift of $a$, then $F_a$ is a Jordan--H\"older factor of
  $L_\lambda\otimes\Fpbar$.

Given a pair $(\lambda,\tau)$, we say that a potentially crystalline representation~$W$ of~$G_K$
over $\Qpbar$ has type $(\lambda,\tau)$ if it is regular of Hodge type~$\lambda$, and has inertial
type~$\tau$. Write $\sigma(\lambda,\tau)$ for $L_\lambda\otimes_{\Zpbar}\sigma(\tau)$, a
$\Zpbar$-representation of~$\GL_n(\cO_K)$, and write $\sigmabar(\lambda,\tau)$ for
the semisimplification of $\sigma(\lambda,\tau)\otimes_{\Zpbar}\Fpbar$. Then the action of~$\GL_n(\cO_K)$
on~$\sigmabar(\lambda,\tau)$ factors through~$\GL_n(k)$, so that the
Jordan--H\"older factors of~$\sigmabar(\lambda,\tau)$ are Serre weights.

If $\rbar : G_K \to \GL_n(\Fpbar)$ has a crystalline lift $W$ of type $(\lambda,1)$ (that is, $W$ is crystalline of Hodge type~$\lambda$),
and $\lambda$ is a lift of some~$a \in(X_1^{(n)})^{\Hom(k,\Fpbar)}$, then we say that~$\rbar$ has a
\emph{crystalline lift
  of Serre weight~$F_a$}. This terminology is sensible because
the existence of a crystalline lift of Hodge type $\lambda$ for some
lift $\lambda$ of $a$ does not depend on the choice of the element $a$ in its equivalence
class under the equivalence relation $\sim$ (cf.~\cite[Lem.~7.1.1]{EGHS}).

If
furthermore $K/\Qp$ is unramified, and $a_{\kappabar,1}-a_{\kappabar,n}\le p-1-n$ for
all~$\kappabar$, then we say that~$a$ (or $F_a$) is an \emph{FL
  weight}, and that~$\rbar$  \emph{has a crystalline lift of
  FL weight~$F_a$}. If instead $a_{\kappabar,1}-a_{\kappabar,n}\le p-n$ for
all~$\kappabar$, then we say that~$a$ (or $F_a$) is an \emph{extended
  FL weight}, and that~$\rbar$ has a \emph{crystalline lift of extended
  FL weight~$F_a$}. 

\subsubsection{Potential diagonalisability}
\label{sec:potent-diag}
Following~\cite{BLGGT}, we say that a potentially crystalline
representation~$r:G_K\to\GL_n(\Zpbar)$ with distinct Hodge--Tate weights is \emph{potentially
  diagonalisable} if for some finite extension $K'/K$,
$r|_{G_{K'}}$ is crystalline, and the corresponding~$\Qpbar$ point
of the corresponding crystalline deformation ring lies on the same
irreducible component as some direct sum of crystalline
characters. (For example, it follows from the main theorem
of~\cite{GaoLiu12}
that any crystalline representation of extended FL
weight is potentially diagonalisable.)

\section{Local existence of lifts in the residually
  Fontaine--Laffaille case}\label{sec:FL lifts to pot
  diag}

\subsection{Peu ramifi\'ee representations}
\label{sec:peu-ramif-repr}

Recall that for any discrete $G_K$-module $X$, the space
$H^1_{\ur}(G_K,X)$ of unramified classes in $H^1(G_K,X)$ is the kernel of
the restriction map $H^1(G_K,X) \to H^1(I_K,X)$; by the
inflation-restriction sequence, this is the same as the image of
the inflation map $H^1(G_K/I_K,X^{I_K}) \into H^1(G_K,X)$. We will
make use of the
following well-known fact.

\begin{lem}
  \label{lem:dim-of-ur}
Suppose that $X$ is a discrete $G_K$-module that is moreover a
finite-dimensional vector space over a field $\F$.  Then
 $$\dim_{\F} H^1_{\ur}(G_K,X) = \dim_{\F} H^0(G_K,X).$$
\end{lem}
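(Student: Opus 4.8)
The plan is to unwind the description of $H^1_{\ur}(G_K,X)$ recalled immediately above the statement and reduce everything to the cohomology of a single Frobenius. Set $\Gamma := G_K/I_K$, which is procyclic with topological generator the image $\bar\Frob$ of $\Frob_K$, and set $M := X^{I_K}$, a finite-dimensional $\F$-vector space on which $\Gamma$ acts; since $X$ is a discrete $G_K$-module, $M$ is a discrete $\Gamma$-module (indeed the $\Gamma$-action on $M$ has finite image). By the inflation-restriction sequence the inflation map $H^1(\Gamma,M)\to H^1(G_K,X)$ is injective with image $H^1_{\ur}(G_K,X)$, so $H^1_{\ur}(G_K,X)\cong H^1(\Gamma,M)$; and tautologically $H^0(G_K,X)=X^{G_K}=(X^{I_K})^\Gamma=H^0(\Gamma,M)$. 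Thus it suffices to prove $\dim_\F H^1(\Gamma,M)=\dim_\F H^0(\Gamma,M)$.

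Next I would invoke the standard computation of the continuous cohomology of the procyclic group $\Gamma\cong\widehat{\ZZ}$ with coefficients in a discrete module: one has $H^0(\Gamma,M)=\ker\bigl(\bar\Frob-1\colon M\to M\bigr)$ and $H^1(\Gamma,M)\cong\coker\bigl(\bar\Frob-1\colon M\to M\bigr)$. (If one prefers to avoid quoting this in the case of an infinite coefficient field, note that the finite-image $\Gamma$-action on $M\cong\F^n$ is already defined over a finite subfield $\F_0\subseteq\F$, and cohomology commutes with the base change $-\otimes_{\F_0}\F$, reducing to the case of a finite module where the formula is completely standard.) Now $\bar\Frob-1$ is an $\F$-linear endomorphism of the finite-dimensional $\F$-vector space $M$, so the rank--nullity theorem gives $\dim_\F\ker(\bar\Frob-1)=\dim_\F\coker(\bar\Frob-1)$, and combining with the previous paragraph yields the claim.

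There is essentially no obstacle here: the entire content is the identification of unramified classes with $H^1$ of a procyclic group, together with the rank--nullity balance. The only points deserving a word of care are that $X$ being discrete guarantees that $M=X^{I_K}$ is a discrete $\Gamma$-module, so that the procyclic-cohomology formulas apply, and that the hypothesis that $X$ is finite-dimensional over $\F$ is exactly what forces $\ker(\bar\Frob-1)$ and $\coker(\bar\Frob-1)$ to have the same dimension.
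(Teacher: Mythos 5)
Your proof is correct and essentially identical to the paper's: both reduce via inflation--restriction to the cohomology of the procyclic group $G_K/I_K$ acting on $X^{I_K}$, identify $H^0$ and $H^1$ of that group with the kernel and cokernel of $\Frob_K-1$, and conclude by rank--nullity. (Your auxiliary remark about descending to a finite subfield tacitly uses that $\F$ has characteristic $p$, as it does throughout the paper; this is genuinely needed, since the identification $H^1(G_K/I_K,M)\cong\coker(\Frob_K-1)$ fails for a finite-dimensional discrete module over a field of characteristic $0$.)
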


\begin{proof}
  We have $$\dim_{\F} H^1(G_K/I_K,X^{I_K}) = \dim_{\F}
  H^0(G_K/I_K,X^{I_K}) = \dim_{\F} H^0(G_K,X),$$
the first equality coming from the fact that $H^i(G_K/I_K,X^{I_K})$
for $i=  0,1$ are, respectively, the invariants and co-invariants of
$X^{I_K}$ under $\Frob_K-1$.
\end{proof}

\begin{defn}
 \label{def:peu-ramif-fil}
Suppose that $K/\Qp$ is a
finite extension and $\F$ is a field of characteristic $p$.  Consider a representation
  $\rbar:G_K\to\GL_n(\F)$, let $\Vbar$ be the underlying
  $\F[G_K]$-module of $\rbar$, and suppose that $0 = \Ubar_0
  \subset \Ubar_1 \subset \dots \subset \Ubar_{\ell} = \Vbar$ is an
  increasing filtration on $\Vbar$ by $\F[G_K]$-submodules. Write
  $\Vbar_i := \Ubar_i/\Ubar_{i-1}$. We say
  that $\rbar$ is \emph{peu ramifi\'ee with respect to the
  filtration $\{\Ubar_i\}$} if for all $1 \le i \le \ell$ the class in
  $H^1(G_K,\Hom_{\F}(\Vbar_i,\Ubar_{i-1}))$
   defined by $\Ubar_i$
  (regarded as an extension of $\Vbar_i$ by $\Ubar_{i-1}$) is annihilated under Tate local
  duality by
  $H^1_{\ur}(G_K,\Hom_{\F}(\Ubar_{i-1},\Vbar_i(1)))$.
\end{defn}

Since  group cohomology is compatible with base change for field
extensions, so is Definition~\ref{def:peu-ramif-fil}:\ that is, if
$\F'/\F$ is any field extension, then
$\rbar$ is peu ramifi\'ee with respect to some filtration
$\{\Ubar_i\}$ if and only if $\rbar
\otimes_{\F} \F'$ is peu ramifi\'ee with respect to the filtration
$\{\Ubar_i \otimes_{\F} \F'\}$.

Definition~\ref{def:peu-ramif-fil} is
most interesting in the case where the filtration
$\{\Ubar_i\}$ is \emph{saturated}, i.e., where the graded pieces $\Vbar_i$ are
irreducible.   (For instance, any $\rbar$ will trivially be peu ramifi\'ee with respect to the one-step filtration $0 = \Ubar_0 \subset
\Ubar_1 = \Vbar$.) This motivates the following further definition.

\begin{defn}
  \label{def:peu-ramif} We say that $\rbar$ is \emph{peu ramifi\'ee}
  if there exists a saturated filtration $\{\Ubar_i\}$ with respect to
  which $\rbar$ is peu ramifi\'ee as in
  Definition~\ref{def:peu-ramif-fil}. 
\end{defn}

\begin{examples} \hfill \label{examples-pr}
  \begin{enumerate}
  \item
If $n = 2$  and $\rbar \cong
\begin{pmatrix}
  \chi \omega & * \\ 0 & \chi
\end{pmatrix}$ for some character $\chi$, then
Definition~\ref{def:peu-ramif} coincides with the
usual definition of peu ramifi\'ee. (Recall that $\omega$ denotes the mod~$p$ cyclotomic character.) Indeed, the duality pairing
$H^1(G_K,\Fp(1)) \times H^1(G_K,\Fp) \to \Qp/\Zp$ can be identified
(via the Kummer and Artin maps) with the evaluation map
$$K^{\times}/(K^{\times})^p \times  \Hom(K^{\times},\Fp) \to \Fp \into
\Qp/\Zp,$$
from which it is immediate that the classes in $H^1(G_K,\Fp(1))$
that are annihilated by $H^1_{\ur}(G_K,\Fp)$ are precisely those which
are identified with $\cO_K^{\times}/(\cO_K^{\times})^p$ by the Kummer map.

\item If $\rbar$ is semisimple then trivially $\rbar$ is peu ramifi\'ee.

\item  If there are no nontrivial $G_K$-maps $\Ubar_{i-1}
  \to \Vbar_i(1)$ for any $i$ (e.g.\ if one has $\Vbar_j \not\cong \Vbar_i(1)$ for all $j < i$) then
  $\rbar$ is necessarily peu ramifi\'ee because by
  Lemma~\ref{lem:dim-of-ur}  we have
  $H^1_{\ur}(G_K,\Hom_{\F}(\Ubar_{i-1},\Vbar_i(1)))=0$.

\item Suppose $K/\Qp$ is unramified. We will prove in Section~\ref{subsec: FL
  representations are universally twisted} that Fontaine--Laffaille
representations are peu ramifi\'ee, so that all of the main results in this
section will apply to Fontaine--Laffaille representations.
  \end{enumerate}

\end{examples}

\begin{example}\label{ex:order-matters}
If $\rbar$ is peu ramifi\'ee, it is natural to ask whether $\rbar$ is
peu ramifi\'ee with respect to every (saturated) filtration on
$\rbar$. This is not the case. Suppose, for instance, that $K$ does not contain
the $p$-th roots of unity (so $\omega \ne 1$) 
and $$\rbar \cong
\begin{pmatrix}
  \omega & *_1 & *_2 \\ & 1 & 0 \\ & & 1
\end{pmatrix}$$
where the class of the cocycle $*_1$ is nontrivial and peu ramifi\'ee, and the cocycle $*_2$ is
tr\`es ramifi\'ee. For the filtration on $\rbar$ in which $\Ubar_i$ is
the span of the first $i$ vectors giving rise to the above matrix
representation (so that the action of $G_{\Qp}$ on $\Ubar_i$ is
given by the upper-left $i\times i$ block), the representation $\rbar$
is peu ramifi\'ee. This is clear at the first two steps in the
filtration, and for the third step one notes (as in Example~\ref{examples-pr}(3)) that there are no nontrivial
maps $\Ubar_2 \to \Vbar_3(1)$.

On the other hand, if one defines a new filtration on $\rbar$ by
replacing $\Ubar_2$ with the span of the first and third basis vectors
giving rise to the above matrix representation, then $\rbar$ is not
peu ramifi\'ee with respect to the new filtration, because the new
$\Ubar_2$ is tr\`es ramifi\'ee.
\end{example}

\begin{remark}
  One consequence of the preceding example is that the collection of peu ramifi\'ee
  representations is not closed under taking arbitrary
  subquotients. On the other hand, if $\rbar$ is peu ramifi\'ee with
  respect to the filtration $\{\Ubar_i\}$, then for any $a \le b$ it is
  not difficult to check that $\Ubar_b/\Ubar_a$ is peu ramifi\'ee with
  respect to the induced filtration $\{\Ubar_{a+i}/\Ubar_a\}_{0 \le i
    \le b-a}$.

 Using the preceding example one can similarly see that the collection of peu
 ramifi\'ee representations is not closed under contragredients.
\end{remark}

\begin{remark}
  In some sense we are making an arbitrary choice by demanding that we
  first lift $\Ubar_1$, then to $\Ubar_2$, then to $\Ubar_3$, and so
  forth. One could equally well lift in other orders, and as
  Example~\ref{ex:order-matters} shows, this can make a
  difference. However, since Definition~\ref{def:peu-ramif-fil} will
  suffice for our purposes, we do not elaborate further on this point.
\end{remark}

We say that a \emph{$\Zpbar$-lift} of  an $\Fpbar[G_K]$-module $\Vbar$ is a
$\Zpbar[G_K]$-module $V$ that is free as a $\Zpbar$-module,  together with a $\Fpbar[G_K]$-isomorphism $V
\otimes_{\Zpbar} \Fpbar \cong \Vbar$.
We have introduced the notion of a peu ramifi\'ee representation
(Definition~\ref{def:peu-ramif-fil})  in
order to prove the following result, to the effect that peu ramifi\'ee
representations have many $\Zpbar$-lifts.

\begin{thm}
  \label{thm: existence of lifts, prescribed on inertia, for
    peu ramifiee reps} Suppose that
  $K/\Qp$ is a finite extension. Consider a representation
  $\rbar:G_K\to\GL_n(\Fpbar)$ that is peu ramifi\'ee
  with respect to the increasing filtration $\{\Ubar_i\}$, so that $\rbar$
may be written as  \[\rbar=
  \begin{pmatrix}
    \overline{V}_1  &\dots & *\\
& \ddots &\vdots\\
&&\overline{V}_\ell
  \end{pmatrix},\]
where the $\overline{V}_i := \Ubar_i/\Ubar_{i-1}$ are the graded
pieces of the filtration.

For each~$i$, suppose that we are given a $\Zpbar$-representation
$V_i$ of~$G_K$ lifting $\Vbar_i$. 
Then there exist unramified characters $\psi_1,\dots,\psi_\ell$ with
trivial reduction such that  $\rbar$ may be lifted to a
representation~$r$ of the form \[r=
  \begin{pmatrix}
    V_1\otimes\psi_1  &\dots & *\\
& \ddots &\vdots\\
&&V_\ell \otimes\psi_\ell
  \end{pmatrix}.\]
More precisely, $r$ is equipped with an increasing filtration
$\{U_i\}$ by $\Zpbar$-direct summands such that  $U_i/U_{i-1} \cong V_i \otimes \psi_i$ and $r
\otimes_{\Zpbar} \Fpbar \cong \rbar$ induces $U_i \otimes_{\Zpbar}
\Fpbar  \cong \Ubar_i$, for each $1 \le i \le \ell$.

In fact, there are infinitely many choices
  of characters $(\psi_1,\dots,\psi_\ell)$ for which this is
  true, in the strong sense that for any $1 \le i \le \ell$, if
  $(\psi_1,\dots,\psi_{i-1})$ can be extended to an $\ell$-tuple of characters  for which such a lift
  exists, then there are infinitely many choices of $\psi_i$ such that
  $(\psi_1,\dots,\psi_i)$ can also  be extended to such an $\ell$-tuple. 
\end{thm}

\begin{proof} 
We proceed by induction on $\ell$, the case $\ell=1$ being trivial.
  From the induction hypothesis, we can find $\psi_1,\ldots,\psi_{\ell-1}$ so that
  $\Ubar := \Ubar_{\ell-1}$ can be lifted to some
 \[ U:=  \begin{pmatrix}
    V_1\otimes\psi_1  &\dots & *\\
& \ddots &\vdots\\
&&V_{\ell-1} \otimes\psi_{\ell-1}
  \end{pmatrix}.\]
as in the statement of the theorem. It suffices to prove that for each
such choice of $\psi_1,\ldots,\psi_{\ell-1}$, there
exist infinitely many choices of $\psi_{\ell}$ for which $\rbar$ lifts
to an extension of $V_{\ell} \otimes \psi_{\ell}$ by $U$ as in the
statement of the theorem.

Choose the field $E/\Qp$ large enough so that $U$ and $V_\ell$ 
are realisable over $\cO_E$, and so that $\rbar$ is
realisable over the residue field of $E$.   Suppose that $F/E$ is a
finite 
 extension with ramification degree $e(F/E) > (\dim
\Vbar_\ell)(\dim \Ubar)$,
write $\cO$ for the integers of $F$ and $\F$ for its residue field,
and let $\psi : G_K \to \cO^{\times}$ be an unramified character such that
$0 < v_E(\psi(\Frob_K)-1) < 1/(\dim
\Vbar_\ell)(\dim \Ubar)$.   In the
remainder of this argument, when we write $U$ and $V_\ell$ we will
mean \emph{their {\upshape(}chosen{\upshape)} realisations over
  $\cO$}, and similarly $\Ubar$ and $\Vbar_{\ell}$ will mean their
realisations over $\F$ obtained by reduction from $U$ and $V_\ell$.

Extensions of $V_\ell \otimes \psi$ by $U$
 correspond to elements
of~$H^1(G_K,\Hom_{\cO}(V_\ell\otimes \psi,U))$, while~$\rbar$ corresponds to an
element~$c$ of~$\Ext^1_{\F[G_K]}(\Vbar_{\ell},\Ubar)$, which we
identify with $H^1(G_K,\Hom_{\F}(\Vbar_{\ell},\Ubar))$. By hypothesis
(together with the remark about base change immediately following
Definition~\ref{def:peu-ramif-fil}) the class $c$ is annihilated by
$H^1_{\ur}(G_K,\Hom_{\F}(\Ubar,\Vbar_{\ell}(1)))$ under Tate local duality.  Taking the
cohomology of the exact sequence \[0\to
\Hom_{\cO}(V_{\ell}\otimes_{\cO}\psi,  U)\stackrel{\varpi}{\to}\Hom_{\cO}(V_{\ell}\otimes_{\cO}\psi,  U)\to\Hom_{\F}(\Vbar_{\ell},\Ubar),\]
we have in particular an exact
sequence \[H^1(G_K,\Hom_{\cO}(V_{\ell}\otimes_{\cO}\psi,  U))\to
H^1(G_K,\Hom_{\F}(\Vbar_{\ell},\Ubar))\stackrel{\delta}{\to}
H^2(G_K,\Hom_{\cO}(V_{\ell}\otimes_{\cO}\psi,  U)),\] so it is enough
to check that that~$c \in \ker(\delta)$ except for finitely many choices
of $\psi$.

From Tate duality, we have the dual map
\begin{eqnarray*}
H^0(G_K,\Hom_{\cO}(U,V_{\ell}(1) \otimes_{\cO}\psi)\otimes F/\cO) &
                                                                   \stackrel{\delta^\vee}{\to} &
H^1(G_K,\Hom_{\F}(\Ubar,\Vbar_{\ell}(1)).
\end{eqnarray*}
As $\ker(\delta)^{\perp}=\im(\delta^{\vee})$, it is enough to show
that $\im(\delta^{\vee})$ is contained in
$H^1_{\ur}(G_K,\Hom_{\F}(\Ubar,\Vbar_{\ell}(1)))$ except, again, for
possibly finitely many
choices of $\psi$.  Letting $X = \Hom_{\cO}(U,V_{\ell}(1))$,
we first claim that $(X \otimes_{\cO} \cO(\psi))^{G_K} = 0$ for all
but finitely many choices of $\psi$.  Indeed, if
 $$(X \otimes_{\cO} \cO(\psi))^{G_K} = \Hom_{\cO[G_K]}(U,V_{\ell}(1)
 \otimes_{\cO}\psi) \neq 0$$ then  we must have
$W \cong Z(1) \otimes_{\cO} \psi$ for some Jordan--H\"older factor $W$ of $U$
and $Z$ of $V_{\ell}$. This can happen for only finitely many choices
of $\psi$ (by determinant considerations applied to each of the finitely many
pairs $W,Z$). Now we are done by the following proposition.
\end{proof}

\begin{prop}
 \label{prop:unram-image}
Let $F/\Qp$ be a finite extension with ring of integers $\cO$ and
residue field $\F$.  Let $X$ be an $\cO[G_K]$-module that is free of finite rank as an
$\cO$-module. Suppose that there is a field lying $E$ lying between
$F$ and $\Qp$  such that $X$
 is realisable over $\cO_E$
 and with ramification index $e(F/E) > \rank_{\cO}(X)$. Let  $\psi : G_K \to \cO^{\times}$ be an
unramified character such that $0 < v_E(\psi(\Frob_K)-1) < 1/\rank_{\cO}(X)$.

 Assume further that
$(X\otimes_{\cO} \cO(\psi))^{G_K} = 0$.
Then the image of
$$ \delta^{\vee} : H^0(G_K, (X \otimes_{\cO} \cO(\psi)) \otimes_{\cO}
F/\cO) \to H^1(G_K, X \otimes_{\cO} \F)$$
is equal to the subspace of unramified classes, and in particular
depends only on $X \otimes_{\cO} \F$, and not on $X$, $F$, or $\psi$.
\end{prop}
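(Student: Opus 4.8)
The plan is to analyze the connecting map $\delta^\vee$ via the long exact sequence attached to the short exact sequence of $G_K$-modules
\[
0 \to X \otimes_\cO \cO(\psi) \xrightarrow{\ \varpi\ } X \otimes_\cO \cO(\psi) \to X \otimes_\cO \F \to 0,
\]
where $\varpi$ is a uniformizer of $\cO$; note $X \otimes_\cO \cO(\psi) \otimes_\cO F/\cO$ is the direct limit of $X \otimes_\cO \cO/\varpi^m(\psi)$, and $\delta^\vee$ is the composite $H^0(G_K, X \otimes \cO(\psi) \otimes F/\cO) \to H^1(G_K, X\otimes\cO(\psi))[\varpi] \to H^1(G_K, X\otimes\F)$, the first map being the usual boundary and the second reduction mod $\varpi$. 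First I would dispense with the easy inclusion: the image of $\delta^\vee$ lands in the unramified classes. This is because restriction to $I_K$ commutes with all the maps in sight, and since $\psi$ is unramified, $X \otimes \cO(\psi)|_{I_K} \cong X|_{I_K}$; so it suffices to see that the analogous boundary map for $I_K$ vanishes, i.e. that every element of $H^0(G_K, X\otimes\cO(\psi)\otimes F/\cO)$ restricted to $I_K$ lifts to $H^0(I_K, X\otimes\cO(\psi))$. Actually the cleaner route is: an element in the image of $\delta^\vee$ is killed by restriction to $I_K$ precisely when the corresponding $H^0$-class, viewed in $H^0(I_K, -\otimes F/\cO)$, lifts along $\varpi$; and since $(X\otimes\cO(\psi))^{G_K} = 0$ by hypothesis while we have made no such assumption over $I_K$, I should instead argue directly that $H^0(G_K, X\otimes\cO(\psi)\otimes F/\cO)$ is finite (see below) and that the image of $\delta^\vee$ has the same $\F$-dimension as $H^1_\ur(G_K, X\otimes\F)$, then combine with the inclusion. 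Let me reorganize accordingly.

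The key computation is a dimension count. By Lemma~\ref{lem:dim-of-ur}, $\dim_\F H^1_\ur(G_K, X\otimes\F) = \dim_\F H^0(G_K, X\otimes\F)$. On the other side, from the long exact sequence, $\im(\delta^\vee) \cong H^0(G_K, X\otimes\cO(\psi)\otimes F/\cO) / \varpi$ (the quotient by the image of multiplication by $\varpi$, equivalently the image in $H^1[\varpi]$), so I need
\[
\dim_\F \bigl( H^0(G_K, X\otimes_\cO \cO(\psi)\otimes_\cO F/\cO)/\varpi \bigr) = \dim_\F H^0(G_K, X\otimes_\cO\F).
\]
Write $M_m := (X \otimes_\cO \cO/\varpi^m(\psi))^{G_K}$, so the left-hand module is $\varinjlim_m M_m$. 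Since $(X\otimes\cO(\psi))^{G_K}=0$, the module $T := \varprojlim_m M_m = (X\otimes\cO(\psi))^{G_K}\otimes\cdots$ — more carefully, $\varprojlim M_m$ injects into $\varprojlim X\otimes\cO/\varpi^m(\psi) = X\otimes\cO(\psi)$ and lands in the $G_K$-invariants, hence is $0$; so $M_\infty := \varinjlim M_m$ is a cofinitely generated torsion $\cO$-module with trivial Tate module, i.e. it is finite. Its $\varpi$-torsion $M_\infty[\varpi]$ and its cokernel $M_\infty/\varpi$ then have the same $\F$-dimension (finiteness), and $M_\infty[\varpi] \hookrightarrow (X\otimes\F(\psi))^{G_K} = (X\otimes\F)^{G_K}$ (using $\psi \equiv 1$). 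The reverse inclusion $(X\otimes\F)^{G_K} \subseteq M_\infty[\varpi]$ holds because every $G_K$-invariant of $X\otimes\F = X\otimes\cO/\varpi(\psi)$ is by definition an element of $M_1 \subseteq M_\infty$ killed by $\varpi$. Hence $\dim_\F M_\infty/\varpi = \dim_\F M_\infty[\varpi] = \dim_\F (X\otimes\F)^{G_K} = \dim_\F H^0(G_K,X\otimes\F)$, which is exactly what was needed. Combined with the inclusion $\im(\delta^\vee) \subseteq H^1_\ur(G_K,X\otimes\F)$ and Lemma~\ref{lem:dim-of-ur}, equality follows, and since $H^1_\ur(G_K,X\otimes\F)$ manifestly depends only on $X\otimes\F$, so does $\im(\delta^\vee)$.

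I expect the main obstacle to be the proof of the inclusion $\im(\delta^\vee) \subseteq H^1_\ur(G_K, X\otimes\F)$ — i.e. that the classes produced by $\delta^\vee$ are unramified. This is where the hypotheses on $e(F/E)$ and on $v_E(\psi(\Frob_K)-1)$ must enter (they have played no role in the dimension count). The point should be: restricting to $G_{K_\infty}$ or rather to $I_K$, one wants to lift a given $G_K$-invariant of $X\otimes\cO(\psi)\otimes F/\cO$ to an $I_K$-invariant of $X\otimes\cO(\psi)$. Since $\psi|_{I_K}$ is trivial, this reduces to lifting an $I_K$-invariant torsion element of $X\otimes F/\cO$ to $X$; but $X^{I_K}$ need not surject onto $(X\otimes F/\cO)^{I_K}$ in general. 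The ramification hypothesis must force the relevant obstruction to vanish: concretely, $\psi(\Frob_K) - 1$ has $E$-valuation strictly between $0$ and $1/\rank_\cO X$, and on any Jordan–Hölder constituent where $\Frob_K$ could act by a scalar making $\cO(\psi)$-invariants appear over $I_K$ but not over $G_K$, the determinant / characteristic-polynomial estimate (exactly as in the last paragraph of the proof of Theorem~\ref{thm: existence of lifts, prescribed on inertia, for peu ramifiee reps}, comparing $v_E$ of eigenvalues against the $\rank$) rules this out for our chosen $\psi$. I would make this precise by decomposing over $G_K/I_K$-action on $X^{I_K}$ and tracking $\varpi$-divisibility of $(\Frob_K - \psi(\Frob_K)^{-1})$ on the torsion quotient, showing the snake-lemma connecting map into $H^1(I_K,-)$ is zero because the relevant $\cO$-linear operator is injective with cokernel of bounded length, while the $F/\cO$-invariants are forced to lift. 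This ramification bookkeeping is the technical heart; everything else is formal.
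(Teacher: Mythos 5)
Your overall architecture is sound and half of it matches the paper: the dimension count is correct and is essentially the paper's own computation. Indeed, since $(X_\psi)^{G_K}=0$ forces $(X_\psi\otimes_{\cO}F/\cO)^{G_K}$ to be finite, one gets $\dim(\im\delta^\vee)=\dim\bigl((X_\psi\otimes F/\cO)^{G_K}/\varpi\bigr)=\dim\bigl((X_\psi\otimes F/\cO)^{G_K}[\varpi]\bigr)=\dim\Xbar^{G_K}=\dim H^1_{\ur}(G_K,\Xbar)$, exactly as you argue. Given that count, either inclusion between $\im\delta^\vee$ and $H^1_{\ur}$ yields the proposition.

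The genuine gap is that you never prove your chosen inclusion $\im\delta^\vee\subseteq H^1_{\ur}(G_K,\Xbar)$; you only sketch a plan and explicitly defer ``the technical heart.'' This is not a routine verification: a class $\delta^\vee(e)$ restricted to $I_K$ vanishes iff $e|_{I_K}$ lies in $\varpi\cdot(X\otimes F/\cO)^{I_K}$, and since $(X\otimes F/\cO)^{I_K}$ is generally strictly larger than its maximal divisible subgroup $X^{I_K}\otimes F/\cO$, there is a real obstruction living in a finite quotient; killing it is precisely where the hypotheses $e(F/E)>\rank_{\cO}X$ and $0<v_E(\psi(\Frob_K)-1)<1/\rank_{\cO}X$ must be deployed, and your proposal contains no actual argument doing so. The paper avoids this by proving the \emph{opposite} inclusion $H^1_{\ur}\subseteq\im\delta^\vee$ by explicit construction: after reducing to $F/E$ totally ramified, for an unramified class represented by $\fbar\in\ker(N^{i+1})$ with $N=\Frob_K-1$ on $\Xbar^{I_K}$, it builds $f^*=\sum_{j=0}^{i}\alpha^j\iota(f_j)\in X_\psi$ with $\alpha=\psi(\Frob_K)^{-1}-1$, checks $G_K$-invariance of $f^*\otimes\alpha^{-i-1}$ modulo $\alpha^{i+1}$, and uses the valuation estimate $v_E(\alpha^{i+1})<1=v_E(\varpi_E)$ (which is exactly where $i+1\le\rank_{\cO}X$ and the bound on $v_E(\psi(\Frob_K)-1)$ enter) to see both that the element lies in $(X_\psi\otimes F/\cO)^{G_K}$ and that the resulting cocycle is unramified with $c_{\fbar}(\Frob_K)=-\fbar$. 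To complete your proof you would need either to reproduce a construction of this kind, or to carry out in detail the ``ramification bookkeeping'' for your inclusion; as written, the step where the hypotheses do all the work is missing.
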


\begin{proof} The statement is unchanged upon replacing $E$
  with the maximal unramified extension $E^{\ur}$ of $E$ contained in
  $F$. We are therefore
  reduced to the case where $F/E$ is totally ramified (so that in
  particular $\F$ is also the residue field of $E$).

Let $X_{\cO_E}$ be a realisation of $X$ over $\cO_E$.  Write $\Xbar = X_{\cO_E}
\otimes_{\cO_E} \F = X \otimes_{\cO} \F$ and $X_{\psi} =
  X_{\cO_E} \otimes_{\cO_E} \cO(\psi)$.
The inclusion $\iota : X_{\cO_E} \into X_\psi$ sending $x
\mapsto x \otimes 1$  is a map of
$\cO_E$-modules inducing an isomorphism of $\F[G_K]$-modules $\Xbar \cong X_\psi
\otimes_{\cO} \F$. Moreover for any $g \in G_K$ and $x \in X_{\cO_E}$ we have
$g \cdot \iota(x) = \psi(g)(\iota(g \cdot x))$, so that the map
$\iota$ is at least $I_K$-linear.

Define $\alpha = \psi(\Frob_K)^{-1} -1$ and write $N = \Frob_K - 1$, which acts on  $\Xbar^{I_K}$ with kernel
$\ker(N) = \Xbar^{G_K}$.  We have an isomorphism
$$H^1(G_K/I_K,\Xbar^{I_K}) \cong \Xbar^{I_K}/N \Xbar^{I_K}$$ induced
by evaluation at $\Frob_K$. Note that any class in this quotient space
has a representative in $\cup_{i=0}^{\infty} \ker(N^i)$, as can be seen for example
by writing $\Xbar^{I_K} = \overline{Y} \oplus \overline{Z}$ with $N$
nilpotent on $\overline{Y}$ and invertible on $\overline{Z}$. Hence to see
that the image of $\delta^{\vee}$ contains all unramified classes,
it suffices to exhibit for $\o f \in \cup_{i=0}^{\infty} \ker(N^i)$
an element $e_{\o f} \in (X_\psi \otimes_{\cO} F/\cO)^{G_K}$ such that $\delta^\vee(e_{\o f}) = [\o f]$
in $H^1(G_K/I_K,\Xbar^{I_K})$.

Suppose then that $\o f \in \cup_{i=0}^{\infty} \ker(N^i)$ is nonzero. Let $i \ge 0$
be the largest integer such that $N^i \o f \ne 0$, and let $\fbar_i := \o f$.
For each $0 \le j \le i$ let $f_j \in X_{\cO_E}$ be a lift of
$N^{i-j} \fbar_i$, and define
$$ f^* = \sum_{j=0}^i \alpha^j \cdot \iota(f_j) \in X_\psi.$$
Since $\fbar_j \in \Xbar^{I_K}$, it follows that for $g \in I_K$ we have $g(f_j)\equiv f_j \pmod{\varpi_E
  X_{\cO_E}}$ with $\varpi_E \in \cO_E$ a uniformiser, and so also $g(f^*) \equiv
f^* \pmod{\varpi_E X_\psi}$.

Now let us compute $(\Frob_K-1)(f^*)$. Noting that $(\Frob_K-1)f_j
\equiv f_{j-1} \pmod{\varpi_E X_{\cO_E}}$, with $f_{-1} := 0$, and recalling
that $(1+\alpha)(\Frob_K \cdot \iota(x)) = \iota(\Frob_K \cdot x)$, we have
\begin{eqnarray*}
  (1+\alpha)(\Frob_K(f^*)-f^*) & = & \sum_{j=0}^i \alpha^j
                                     \iota(\Frob_K(f_j)) -
                                     (1+\alpha)\sum_{j=0}^i \alpha^j
                                     \iota(f_j) \\
& = & \sum_{j=0}^i \alpha^j \iota((\Frob_K-1)f_j) - \sum_{j=0}^i
      \alpha^{j+1} \iota(f_j) \\
& \equiv & \sum_{j=0}^i \alpha^j \iota(f_{j-1}) - \sum_{j=0}^i
           \alpha^{j+1} \iota(f_j) \pmod{\varpi_E X_{\psi}} \\
& \equiv & -\alpha^{i+1} \iota(f_i) \pmod{\varpi_E X_\psi}.
\end{eqnarray*}
Note that $N^{i+1} \fbar_i= 0$ and $N^i \fbar_i \neq 0$, so that $i+1 \le
\dim_{\F} \Xbar^{I_K} \le \rank_{\cO} X$. Therefore
$v_E(\alpha^{i+1}) < 1$, and we deduce that $g(f^*) \equiv f^*
\pmod{\alpha^{i+1} X_{\psi}}$ for all $g \in G_K$, or in other words
$f^* \otimes \alpha^{-i-1} \in (X_\psi \otimes F/\cO)^{G_K}$.

Furthermore, if $c_{\fbar} := \delta^{\vee}(f^* \otimes \alpha^{-i-1}) \in
H^1(G_K,\Xbar)$, then $c_{\fbar}(g)$ is by definition the image in
$\Xbar$ of $\alpha^{-i-1}(g(f^*)-f^*)$.  So on the one hand $c_{\fbar}$ is unramified
(because $v_E(\alpha^{i+1}) < v_E(\varpi_E)=1$), while on the other
hand $c_{\fbar}(\Frob_K) = -\fbar_i$. Thus we can take $e_{\o f} := -f^* \otimes \alpha^{-i-1}$,
and we have shown that $H^1(G_K/I_K,\Xbar^{I_K}) \subset
\im \delta^{\vee}$.

On the other hand, since $X_\psi^{G_K}$ is assumed to be trivial, we
have that $(X_\psi \otimes F/\cO)^{G_K}$ is of finite length; so
if $\varpi_F$ is a uniformiser of $F$, then
\begin{eqnarray*}
  \dim(\im \delta^{\vee}) & = & \dim( (X_{\psi} \otimes
                                F/\cO)^{G_K}/\varpi_F) \\
 & = & \dim((X_{\psi} \otimes F/\cO)^{G_K}[\varpi_F]) = \dim
       \Xbar^{G_K} = \dim (\ker N).
\end{eqnarray*}
On the other hand $\dim (\ker N) = \dim (\coker N) = \dim
H^1(G_K/I_K,\Xbar^{I_K})$, and the result follows.
\end{proof}

Theorem~\ref{thm: existence of lifts, prescribed on inertia, for
    peu ramifiee reps} implies the following result on the existence
  of certain potentially crystalline Galois representations.

\begin{prop}
  \label{prop: existence of lifts, prescribed on inertia, for
    peu ram}Suppose that
  $K/\Qp$ is a finite extension. Consider a representation
  $\rbar:G_K\to\GL_n(\Fpbar)$ that  is peu ramifi\'ee
  with respect to the increasing filtration $\{\Ubar_i\}$, so that $\rbar$
may be written as  \[\rbar=
  \begin{pmatrix}
    \overline{V}_1  &\dots & *\\
& \ddots &\vdots\\
&&\overline{V}_\ell
  \end{pmatrix},\]
where the $\overline{V}_i := \Ubar_i/\Ubar_{i-1}$ are the graded
pieces of the filtration.

For each~$i$, suppose that we are given a $\Zpbar$-representation
$V_i$ of~$G_K$ lifting $\Vbar_i$ such that:
\begin{itemize}
\item each $V_i$ is potentially crystalline, and
\item for each $1 \le i < \ell$ and each $\kappa : K \into \Qpbar$,
  every element of~$\HT_{\kappa}(V_{i+1})$ is strictly greater than every
  element of~$\HT_{\kappa}(V_{i})$.
\end{itemize}
Then $\rbar$ may be lifted to a potentially crystalline
representation~$r$ of the form \[r=
  \begin{pmatrix}
    V_1\otimes\psi_1  &\dots & *\\
& \ddots &\vdots\\
&&V_\ell \otimes\psi_\ell
  \end{pmatrix},\]where each $\psi_i$ is an unramified character
  with trivial reduction, and if $K'/K$ is a finite extension such that each
  $V_i|_{G_{K'}}$ is crystalline, then $r|_{G_{K'}}$ is also crystalline. 

In fact, there are infinitely many choices
  of characters $(\psi_1,\dots,\psi_\ell)$ for which this is
  true, in the strong sense that for any $1 \le i \le \ell$, if
  $(\psi_1,\dots,\psi_{i-1})$ can be extended to an $\ell$-tuple of characters  for which such a lift
  exists, then there are infinitely many choices of $\psi_i$ such that
  $(\psi_1,\dots,\psi_i)$ can also  be extended to such an $\ell$-tuple.
\end{prop}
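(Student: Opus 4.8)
The plan is to bootstrap from Theorem~\ref{thm: existence of lifts, prescribed on inertia, for peu ramifiee reps}. That theorem already produces a lift $r$ of the required triangular shape, equipped with a filtration $\{U_i\}$ by $\Zpbar$-direct summands reducing to $\{\Ubar_i\}$ and with $U_i/U_{i-1}\cong V_i\otimes\psi_i$ for unramified characters $\psi_i$ of trivial reduction, and it provides the ``infinitely many choices, in the strong sense'' assertion. So the only genuinely new content is potential crystallinity: I must show that, after discarding at most finitely many values of each $\psi_i$ at each stage (which preserves the strong-sense assertion since the remaining set is still infinite), one may take $r$ with $r|_{G_{K'}}$ crystalline for a fixed finite $K'/K$ over which all of the $V_i$ become crystalline. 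Note first that each $V_i\otimes\psi_i$ is itself potentially crystalline and becomes crystalline over that same $K'$, since $\psi_i$ is unramified and hence crystalline over any base.

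I would argue by induction on $\ell$, the case $\ell=1$ being the hypothesis. Suppose $U_{\ell-1}|_{G_{K'}}$ is crystalline (this is the inductive statement applied to the sub-filtration $\{U_i\}_{i\le\ell-1}$, which lifts the peu ramifi\'ee representation $\Ubar_{\ell-1}$; cf.\ the Remark following Example~\ref{ex:order-matters}). Then $r|_{G_{K'}}$ is an extension $0\to U_{\ell-1}|_{G_{K'}}\to r|_{G_{K'}}\to (V_\ell\otimes\psi_\ell)|_{G_{K'}}\to 0$, classified after inverting $p$ by a class in $H^1(G_{K'},M)$, where $M:=\Hom((V_\ell\otimes\psi_\ell)|_{G_{K'}},U_{\ell-1}|_{G_{K'}})$ is a crystalline representation of $G_{K'}$. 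The key observation is that the Hodge--Tate weight hypothesis forces every element of every $\HT_\kappa(M)$ to be $\le -1$: indeed $\HT_\kappa(M)=\{\,u-v: u\in\HT_\kappa(U_{\ell-1}),\ v\in\HT_\kappa(V_\ell)\,\}$, and each such $u$ lies in $\HT_\kappa(V_j)$ for some $j<\ell$, so $u<v$ by the chain of strict inequalities $\HT_\kappa(V_j)<\HT_\kappa(V_{j+1})<\cdots<\HT_\kappa(V_\ell)$. Consequently $\Fil^0 D_{\dR}(M)=0$, and also $H^0(G_{K'},M)=0$, since a nonzero invariant vector would embed the trivial representation (of Hodge--Tate weight $0$) into $M$.

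It remains to dispose of $H^2$. We have $M^\vee(1)\cong\Hom(U_{\ell-1},V_\ell(1))\otimes\psi_\ell$ as $G_{K'}$-representations, so, exactly as in the final paragraph of the proof of Theorem~\ref{thm: existence of lifts, prescribed on inertia, for peu ramifiee reps}, a determinant argument applied to the finitely many pairs of Jordan--H\"older factors of $U_{\ell-1}|_{G_{K'}}$ and $V_\ell|_{G_{K'}}$ shows that $(M^\vee(1))^{G_{K'}}=\Hom_{G_{K'}}(U_{\ell-1},V_\ell(1)\otimes\psi_\ell)=0$ for all but finitely many $\psi_\ell$; discard the exceptions. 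For these $\psi_\ell$, Tate local duality gives $H^2(G_{K'},M\otimes\Qpbar)=0$, and combined with $H^0(G_{K'},M\otimes\Qpbar)=0$ and $\Fil^0 D_{\dR}(M)=0$ the Bloch--Kato dimension formulas yield $\dim H^1_f(G_{K'},M\otimes\Qpbar)=[K':\Qp]\dim M=\dim H^1(G_{K'},M\otimes\Qpbar)$, hence $H^1(G_{K'},M\otimes\Qpbar)=H^1_f(G_{K'},M\otimes\Qpbar)$. Therefore \emph{every} extension of $(V_\ell\otimes\psi_\ell)|_{G_{K'}}$ by $U_{\ell-1}|_{G_{K'}}$ is crystalline; in particular $r|_{G_{K'}}$ is, which closes the induction (and ``crystalline over $K'$'' is the definition of potential crystallinity; enlarging $K'$ handles larger fields).

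The only substantive step beyond Theorem~\ref{thm: existence of lifts, prescribed on inertia, for peu ramifiee reps} is the Bloch--Kato count in the previous paragraph: one has to recognise that the \emph{strict} separation of Hodge--Tate weights is precisely what makes $H^1_f$ exhaust all of $H^1$ once the Tate-dual invariants have been annihilated, and that the latter annihilation is arranged by exactly the same unramified-twisting freedom that powered Theorem~\ref{thm: existence of lifts, prescribed on inertia, for peu ramifiee reps} --- indeed the vanishing of $(M^\vee(1))^{G_{K'}}$ is the $G_{K'}$-analogue of the vanishing of $(X\otimes_{\cO}\cO(\psi))^{G_K}$ exploited there and in Proposition~\ref{prop:unram-image}. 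Everything else --- potential crystallinity of unramified twists, the identification of $H^1_f$ with classes of crystalline extensions of crystalline representations, and the bookkeeping of ``infinitely many choices in the strong sense'' --- is routine.
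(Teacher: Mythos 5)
Your proposal is correct, and it reaches the crystallinity conclusion by a genuinely different route from the paper. The paper's own proof is essentially a citation of Nekov\'a\v{r} \cite{MR1263527}: Prop.~1.28(2) there shows that the strict separation of Hodge--Tate weights forces $r|_{G_{K'}}$ to be \emph{semistable} for free, and then Prop.~1.24(2) and 1.26 reduce crystallinity to the non-existence of a $G_{K'}$-equivariant surjection $(V_j\otimes V_i^*)(\psi_j\psi_i^{-1})\twoheadrightarrow\cyclo$ for $j<i$, which is excluded by discarding finitely many $\psi_i$ at each stage. You instead run an induction on $\ell$ and prove the stronger statement that, once $(\Hom(U_{\ell-1},V_\ell(1))\otimes\psi_\ell)^{G_{K'}}$ has been killed by twisting, \emph{every} extension of $(V_\ell\otimes\psi_\ell)|_{G_{K'}}$ by $U_{\ell-1}|_{G_{K'}}$ is crystalline, via the Bloch--Kato dimension formula for $H^1_f$ together with the local Euler characteristic and Tate duality. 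The two arguments exclude literally the same finite sets of twists (your vanishing of $H^0(G_{K'},M^\vee(1))$ is dual to the paper's ``no surjection onto $\cyclo$'' condition), and both exploit the strict weight separation at the same point (you via $\mathrm{Fil}^0 D_{\mathrm{dR}}(M)=0$ and $H^0(M)=0$, the paper via automatic semistability of the extension). What the paper's route buys is brevity and a statement that is manifestly uniform in $K'$; what yours buys is a self-contained argument (modulo standard Bloch--Kato facts) and the extra information that $H^1=H^1_f$. One small point worth making explicit in your write-up: the proposition asserts crystallinity over \emph{every} $K'$ over which all the $V_i$ are crystalline, whereas your finite exclusion set is computed for one fixed such $K'_0$; this is harmless, since once $r|_{G_{K'_0}}$ is crystalline the monodromy operator on $D_{\mathrm{pst}}(r)$ vanishes and $I_{K'}$ acts on $D_{\mathrm{pst}}(r)$ through a finite group by a successive extension of trivial representations, hence trivially, so $r|_{G_{K'}}$ is crystalline for any other admissible $K'$. (The paper's write-up is equally terse on this point.)
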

\begin{proof}

This follows from Theorem~\ref{thm: existence of lifts, prescribed on inertia, for
    peu ramifiee reps} 
  along with
  standard facts about
  extensions of de Rham representations. Indeed, by
  \cite[Prop.~1.28(2)]{MR1263527} and
our assumption on the
Hodge--Tate weights of the~$V_i$, 
the representation $r|_{G_{K'}}$ is
  semistable for any $r$ as in Theorem~\ref{thm: existence of lifts, prescribed on inertia, for
    peu ramifiee reps} and any $K'$ as above. Then by repeated
  application of the third
  part of \cite[Prop.~1.24(2)]{MR1263527}, as well as \cite[Prop.~1.26]{MR1263527}, this semistable
  representation is guaranteed to be crystalline as long as there is
  no $G_{K'}$-equivariant surjection $(V_j \otimes V_i^{*})(\psi_j \psi_i^{-1})
  \twoheadrightarrow \cyclo$ for any $j < i$. Once
  $\psi_1,\dots,\psi_{i-1}$ have been determined, this can be arranged
  by avoiding finitely many possibilities for $\psi_i$. 
\end{proof}

We give two sample applications of Proposition~\ref{prop: existence of lifts, prescribed on inertia, for
    peu ram}.
The following Corollary will be used in forthcoming work of Arias de
Reyna and Dieulefait (in the special case where $\rbar$ is
Fontaine--Laffaille and the Hodge type $\lambda$ is $0$).
\begin{cor}
\label{cor: peu ramimplies HT type whatever and bounded tame type} Fix an integer $n\ge 1$. Then there is a
finite extension $K'/K$, depending only on $n$, with the following
property: if $\rbar:G_K\to\GL_n(\Fpbar)$ is peu ramifi\'ee and
$\lambda=(\lambda_{\kappa,i})\in(\Z^n_+)^{\Hom_\Qp(K,\Qpbar)}$, then
$\rbar$ has a potentially
diagonalisable lift $r:G_K\to\GL_n(\Zpbar)$ that is regular of Hodge type~$\lambda$,
with the property that $r|_{G_{K'}}$ is crystalline.
\end{cor}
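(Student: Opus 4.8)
The plan is to build the required lift as a successive extension of inductions of potentially crystalline characters and then to quote Proposition~\ref{prop: existence of lifts, prescribed on inertia, for peu ram}. Fix a uniformiser $\varpi$ of $K$ and let $q = \#k$ for $k$ the residue field of $K$. For $1 \le d \le n$ write $K_d/K$ for the unramified extension of degree $d$, and let $K'$ be a finite Galois extension of $\Qp$ containing the compositum of the fields $K_d(\varpi^{1/(q^d-1)})$ for $1\le d\le n$; this depends only on $n$ and $K$, and contains every $K_d$ with $d \le n$. Since $\rbar$ is peu ramifi\'ee, fix a saturated filtration $\{\Ubar_i\}_{0\le i\le\ell}$ with respect to which it is peu ramifi\'ee; the graded pieces $\Vbar_i=\Ubar_i/\Ubar_{i-1}$ are irreducible, of some dimension $d_i\le n$ with $\sum_i d_i=n$, and by the classification of irreducible $\Fpbar$-representations of $G_K$ we may write $\Vbar_i\cong\Ind_{G_{K_{d_i}}}^{G_K}\bar\chi_i$ for a character $\bar\chi_i:G_{K_{d_i}}\to\Fpbar^\times$ with pairwise distinct $\Gal(K_{d_i}/K)$-conjugates.

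Next I would choose the Hodge--Tate data. For each $\kappa:K\into\Qpbar$ set $\mu_\kappa:=(\lambda_{\kappa,1}+n-1,\lambda_{\kappa,2}+n-2,\dots,\lambda_{\kappa,n})$, a strictly decreasing tuple of integers, and partition $\{1,\dots,n\}$ into consecutive blocks by assigning the top $d_\ell$ indices to $\Vbar_\ell$, the next $d_{\ell-1}$ to $\Vbar_{\ell-1}$, and so on down to the bottom $d_1$ indices for $\Vbar_1$; let $S_{i,\kappa}\subset\Z$ be the corresponding set of $d_i$ entries of $\mu_\kappa$, so that every element of $S_{i+1,\kappa}$ is strictly greater than every element of $S_{i,\kappa}$. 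There are exactly $d_i$ embeddings of $K_{d_i}$ into $\Qpbar$ extending a given $\kappa$; fix a bijection between these and $S_{i,\kappa}$ for each $\kappa$. The key claim is then that there is a potentially crystalline character $\chi_i:G_{K_{d_i}}\to\Zpbar^\times$ lifting $\bar\chi_i$, with $\chi_i|_{G_{K'}}$ crystalline, and with $\HT_{\tilde\kappa}(\chi_i)$ equal to the element of $S_{i,\kappa}$ assigned to each $\tilde\kappa$ above $\kappa$. This holds because a genuinely crystalline character of $G_{K_{d_i}}$ realises any prescribed tuple of Hodge--Tate weights with no base change at all, whatever their size, while the quotient of $\bar\chi_i$ by the reduction of such a character is an $\Fpbar^\times$-valued --- hence tame --- character whose restriction to inertia has order dividing $q^{d_i}-1$, and so may be lifted (say by Teichm\"uller) to a character that becomes unramified, hence crystalline, over $K_{d_i}(\varpi^{1/(q^{d_i}-1)})\subseteq K'$, with unramified part lifted to an unramified (hence crystalline) character. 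Setting $V_i:=\Ind_{G_{K_{d_i}}}^{G_K}\chi_i$, each $V_i$ is a potentially crystalline lift of $\Vbar_i$ with $\HT_\kappa(V_i)=\bigsqcup_{\tilde\kappa\mid\kappa}\{\HT_{\tilde\kappa}(\chi_i)\}=S_{i,\kappa}$; and since $K'$ is Galois over $\Qp$ and contains $K_{d_i}$, the restriction $V_i|_{G_{K'}}$ is a direct sum of $G_K$-conjugates of $\chi_i|_{G_{K'}}$, hence a direct sum of crystalline characters.

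Finally, I would apply Proposition~\ref{prop: existence of lifts, prescribed on inertia, for peu ram} to $\rbar$ with the filtration $\{\Ubar_i\}$ and these $V_i$: the two hypotheses hold by construction, as each $V_i$ is potentially crystalline and every element of $\HT_\kappa(V_{i+1})=S_{i+1,\kappa}$ exceeds every element of $\HT_\kappa(V_i)=S_{i,\kappa}$. This produces unramified characters $\psi_i$ with trivial reduction and a potentially crystalline lift $r:G_K\to\GL_n(\Zpbar)$ of $\rbar$, equipped with a filtration having graded pieces $V_i\otimes\psi_i$, with $r|_{G_{K'}}$ crystalline (since each $V_i|_{G_{K'}}$ is crystalline). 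Unramified twists leave Hodge--Tate weights unchanged, so $\HT_\kappa(r)=\bigsqcup_i S_{i,\kappa}=\{\lambda_{\kappa,1}+n-1,\dots,\lambda_{\kappa,n}\}$, i.e.\ $r$ is regular of Hodge type $\lambda$; in particular its Hodge--Tate weights are distinct. Refining the filtration on $r|_{G_{K'}}$ using the character decompositions of the $V_i|_{G_{K'}}$ realises $r|_{G_{K'}}$ as a successive extension of crystalline characters; being itself crystalline and having distinct Hodge--Tate weights, it is potentially diagonalisable (any potentially crystalline successive extension of characters being so), and hence $r$ is potentially diagonalisable, as required.

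The step I expect to be the main obstacle is the middle one: arranging that a \emph{single} finite extension $K'$, depending only on $n$ and $K$, works uniformly for every peu ramifi\'ee $\rbar$ of rank $n$ and every $\lambda$. The crucial observation making this possible is that Hodge--Tate weights of arbitrary size are carried by honestly crystalline characters and therefore cost no base change, while the tame defect of $\bar\chi_i$ has order bounded in terms of $n$ and $K$ alone and so is absorbed by one fixed tamely ramified base change; everything else is a routine assembly via Proposition~\ref{prop: existence of lifts, prescribed on inertia, for peu ram} and the bookkeeping of Hodge--Tate weights.
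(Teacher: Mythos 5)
Your argument is correct and follows essentially the same route as the paper: decompose $\rbar$ along a saturated filtration, lift each irreducible graded piece as the induction of (a crystalline character with the prescribed Hodge--Tate weights) times (a Teichm\"uller lift of the tame defect), assemble via Proposition~\ref{prop: existence of lifts, prescribed on inertia, for peu ram}, and conclude potential diagonalisability from the fact that potentially crystalline successive extensions of characters are potentially diagonalisable (\cite[Lem.~1.4.3]{BLGGT}). The only difference is that you prove inline what the paper isolates as Lemma~\ref{lem:existence of a bounded lift for an irreducible rep over an unramified field}.
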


\begin{proof}
Write $\rbar$ as in Proposition~\ref{prop: existence of lifts, prescribed on inertia, for
    peu ram} with
$\Vbar_i$ irreducible for all $i$, and set $d_i = \dim_{\Fpbar} \Ubar_i$.
By Proposition~\ref{prop: existence of lifts, prescribed on inertia, for
    peu ram} and~\cite[Lem.\ 1.4.3]{BLGGT}, it is enough to show that there is a finite extension $K'/K$
  depending only on~$n$, with the property that  we may lift each~$\Vbar_i$ to a
  potentially crystalline representation~$V_i$, such that for all
  $i,\kappa$ the set $\HT_{\kappa}(V_i)$ is equal to $\{
  \lambda_{\kappa,n-j} + j \, : \, j \in [d_{i-1},d_i-1]\},$
  with the additional property that $V_i|_{G_{K'}}$ is isomorphic to a
  direct sum of crystalline characters. This
  is immediate from Lemma~\ref{lem:existence of a bounded lift for an irreducible rep over
    an unramified field} below.
\end{proof}
\begin{lem}
  \label{lem:existence of a bounded lift for an irreducible rep over
    an unramified field} Let $d\ge 1$ be an
  integer. Let $K_d$ be the unramified extension of $K$ of degree $d$,
  and define $L$ to be any totally ramified extension of $K_d$ of
  degree $|k_d^{\times}|$, where $k_d$ is the residue field of $K_d$.
  Let
  $\rbar:G_K\to\GL_d(\Fpbar)$  be an irreducible representation. Then
  for any collection of multisets of $d$ integers
  $\{h_{\kappa,1},\dots,h_{\kappa,d}\}$, one for each continuous embedding
  $\kappa : K \into \Qpbar$, there is a lift of $\rbar$ to a representation
  $r:G_{K}\to\GL_d(\Zpbar)$, such that $r|_{G_{L}}$ is isomorphic to
  a direct sum of crystalline characters,
  and for each $\kappa$ we have $\HT_{\kappa}(r) = \{h_{\kappa,1},\dots,h_{\kappa,d}\}$.
\end{lem}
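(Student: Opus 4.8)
The starting point is the standard fact that an irreducible $\rbar : G_K \to \GL_d(\Fpbar)$ is induced from a character: concretely, $\rbar \cong \Ind_{G_{K_d}}^{G_K} \chibar$ for some character $\chibar : G_{K_d} \to \Fpbarx$, where $K_d/K$ is the degree-$d$ unramified extension. (This uses that $\rbar|_{I_K}$ is semisimple and that an irreducible mod~$p$ representation of $G_K$ of dimension $d$ has image whose projective image is cyclic of order prime to~$p$, so that the representation becomes abelian on a finite unramified extension and one can detect irreducibility via the action of a lift of Frobenius; alternatively quote the classification of irreducible $\Fpbar[G_K]$-modules.) The plan is therefore to lift $\chibar$ appropriately and then induce.

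Next I would arrange the Hodge--Tate weights. Each embedding $\kappa : K \into \Qpbar$ has exactly $d$ extensions to $K_d$; fix a bijection between these extensions and the multiset $\{h_{\kappa,1},\dots,h_{\kappa,d}\}$. I want a potentially crystalline character $\chi : G_{K_d} \to \Zpbarx$ lifting $\chibar$ whose Hodge--Tate weight with respect to the $j$-th extension of $\kappa$ is the prescribed integer, and which becomes crystalline over $L$ (the totally ramified extension of $K_d$ of degree $|k_d^\times|$). Since $\HT_\kappa(\Ind \chi)$ is, by the formula for Hodge--Tate weights of an induction, precisely the union over the $d$ extensions $\kappa'$ of $\kappa$ of $\HT_{\kappa'}(\chi)$, inducing such a $\chi$ gives $r := \Ind_{G_{K_d}}^{G_K}\chi$ with $\HT_\kappa(r) = \{h_{\kappa,1},\dots,h_{\kappa,d}\}$ as required, and $r|_{G_L} = \bigoplus_{g} \chi^g$ is a direct sum of crystalline characters (crystallinity of each conjugate $\chi^g$ over $L$ follows since $L/K_d$ can be taken Galois over $K$, or one enlarges $L$ harmlessly — but in fact $L$ as defined suffices because $\chi$ will be built to be crystalline over the unique such extension, which is Galois-stable). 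Finally one checks $r \otimes \Fpbar \cong \rbar$, which is immediate from $\chi \otimes \Fpbar \cong \chibar$ and compatibility of induction with reduction.

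The real content is the construction of $\chi$, and this is where I expect the main work. By local class field theory $\chi$ is a character of $K_d^\times$, and the condition ``crystalline over $L$ with prescribed Hodge--Tate weights'' translates, via the theory of Lubin--Tate characters / the description of crystalline characters of $G_{K_d}$, into a condition on the restriction of $\chi$ to $\cO_{K_d}^\times$: namely that on a suitable finite-index subgroup (the units congruent to $1$, which after class field theory corresponds to $\Gal(\bar K/L_{\mathrm{ab}})$-type conditions) the character is given by a product of the embeddings raised to the prescribed exponents. One then needs to check two things: (i) such a character of the open subgroup extends to all of $K_d^\times$ — this is automatic since we are extending a character of a subgroup of a locally profinite abelian group to the whole group, after possibly enlarging the coefficient field; and (ii) among all such extensions one can choose the reduction mod~$p$ to be the prescribed $\chibar$ — this works because the prescribed Hodge--Tate exponents determine $\chi$ on $\cO_{K_d}^\times$ only up to a finite-order ``tame'' twist and up to an unramified twist, and both of these have enough freedom (the unramified twist by an arbitrary element of $\Zpbarx$, and the tame twist by characters of $k_d^\times$) to match any prescribed lift $\chibar$ of the fixed algebraic part. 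The choice of $L$ as a totally ramified degree $|k_d^\times|$ extension of $K_d$ is exactly what is needed so that the tame part of $\chi$ (a character of $k_d^\times$) becomes trivial on $I_L$, making $\chi|_{G_L}$ crystalline; that $L$ depends only on $d$ and $K$, not on $\rbar$, is then clear. The one subtlety to be careful about is that $L/K$ need not be Galois, so to make sense of ``$r|_{G_L}$ is a direct sum of crystalline characters'' one should note that each conjugate $\chi^g$ of $\chi$ by $g \in G_K$ is a crystalline character of $G_{gL g^{-1}}$, and $g L g^{-1}$ is again a totally ramified degree $|k_d^\times|$ extension of $K_d$; since all such extensions are contained in a common abelian (even Galois over $K$) extension $\widetilde L$ of bounded degree, one may replace $L$ by $\widetilde L$ throughout, or simply observe that the Lubin--Tate construction can be carried out so that $\chi$ is crystalline over the maximal totally ramified abelian extension of $K_d$ of exponent dividing $|k_d^\times|$, which is Galois-stable.
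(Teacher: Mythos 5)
Your proposal is correct and follows essentially the same route as the paper: write $\rbar \cong \Ind_{G_{K_d}}^{G_K}\psibar$, lift $\psibar$ to a potentially crystalline character of $G_{K_d}$ with the prescribed Hodge--Tate weights that becomes crystalline over $L$, and induce. The paper packages your class-field-theoretic construction more briefly as a crystalline character with the right weights (quoting Serre for its existence) multiplied by the Teichm\"uller lift of the residual discrepancy, whose restriction to inertia has order dividing $|k_d^\times|$ and hence dies on $I_L$ --- and likewise for all its $G_K$-conjugates, since the tame inertia quotient of $I_{K_d}$ is procyclic and so any totally ramified degree-$|k_d^\times|$ extension kills every tame character of that order, which disposes of your worry about $L/K$ not being Galois without enlarging $L$.
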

\begin{proof}
  Since $\rbar$ is irreducible, we can write
  $\rbar\cong\Ind_{G_{K_d}}^{G_K}\psibar$, and~$\psibar:G_{K_d} \to\Fpbar^\times$ is a
  character. Choose a crystalline character
  $\chi:G_{K_d} \to\Qpbar^\times$ with the property that for each
  continuous embedding
  $\kappa:K\into\Qpbar$ we have  $$\bigcup_{\tilde{\kappa}|_K = \kappa}
  \HT_{\tilde{\kappa}}(\chi) = \{h_{\kappa,1},\dots,h_{\kappa,d}\},$$
  where the union is taken as multisets. (That such a
  character exists is well-known; see e.g.\ \cite[\S2.3,
  Cor.~2]{MR563476}.)
If we let $\theta:G_{K_d} \to\Zpbar^\times$ be the
  Teichm\"uller lift of $\psibar\chibar^{-1}$, then we may take
  $r:=\Ind_{G_{K_d}}^{G_K}\chi\theta$, which has the correct
  Hodge--Tate weights by \cite[Cor.~7.1.3]{EGHS}. (Note that
  ${}^g \theta |_{G_{L}}$ is unramified for any $g \in G_K$.)
\end{proof}

As a second application of Proposition~\ref{prop: existence of lifts, prescribed on inertia, for
    peu ram}, we show that each peu ramifi\'e representation has a
  crystalline lift of some Serre weight.

  \begin{cor}\label{cor:crys-lift-of-serre-weight}
    Suppose that $K/\Qp$ is a finite extension, and that $\rbar: G_K
    \to \GL_n(\Fpbar)$ is peu ramifi\'ee. Then $\rbar$ has a
    crystalline lift of some Serre weight.
  \end{cor}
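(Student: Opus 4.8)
The natural plan is to combine Proposition~\ref{prop: existence of lifts, prescribed on inertia, for peu ram} with a crystalline lift of a Serre weight for each irreducible constituent of $\rbar$, glued together so that the pieces occupy ``adjacent'' Hodge--Tate weight ranges. The one point requiring care is that Proposition~\ref{prop: existence of lifts, prescribed on inertia, for peu ram} yields a lift $r$ that is crystalline over $K$ only if the constituent lifts are crystalline over $K$ (not merely potentially crystalline); so, unlike in Lemma~\ref{lem:existence of a bounded lift for an irreducible rep over an unramified field}, we must produce genuine crystalline lifts of the pieces.

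First I would fix a saturated filtration $\{\Ubar_i\}$ with respect to which $\rbar$ is peu ramifi\'ee, with irreducible graded pieces $\Vbar_i := \Ubar_i/\Ubar_{i-1}$; write $e_i = \dim\Vbar_i$ and $d_i = e_1+\dots+e_i$. Since $\Vbar_i$ is irreducible it is of the form $\Ind_{G_{K_{e_i}}}^{G_K}\psibar_i$, where $K_{e_i}/K$ is the unramified extension of degree $e_i$ and the $e_i$ conjugates of the character $\psibar_i$ over $G_K$ are distinct. For each $i$ I would choose a crystalline character $\chi_i$ of $G_{K_{e_i}}$ whose reduction agrees with $\psibar_i$ on inertia; twisting $\chi_i$ by the Teichm\"uller lift of the unramified character $\psibar_i\overline{\chi}_i^{-1}$ (as in the proof of Lemma~\ref{lem:existence of a bounded lift for an irreducible rep over an unramified field}) we may assume $\overline{\chi}_i=\psibar_i$, and then $V_i := \Ind_{G_{K_{e_i}}}^{G_K}\chi_i$ is a crystalline (over $K$) lift of $\Vbar_i$, with $\HT_\kappa(V_i)=\{\,b_{i,\tilde\kappa}:\tilde\kappa|_K=\kappa\,\}$, where $(b_{i,\tilde\kappa})$ are the Hodge--Tate weights of $\chi_i$. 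The point is then to choose the $b_{i,\tilde\kappa}$ so that: (a) for each $\kappa$ every element of $\HT_\kappa(V_{i+1})$ exceeds every element of $\HT_\kappa(V_i)$; (b) there is one embedding $\kappa_0(\kappabar)$ above each $\kappabar\in\Hom(k,\Fpbar)$ with $\HT_\kappa(V_i)=\{d_{i-1},\dots,d_i-1\}$ for all \emph{other} $\kappa$ above $\kappabar$; and (c) at $\kappa_0(\kappabar)$ the multiset $\bigsqcup_i\HT_{\kappa_0(\kappabar)}(V_i)$, in decreasing order, has consecutive differences $\le p$. The only constraint that $\overline{\chi}_i=\psibar_i$ places on the $b_{i,\tilde\kappa}$ is a single congruence, modulo $p^{e_if}-1$ with $f=[k:\Fp]$, coming from the standard formula for the restriction to inertia of the reduction of a crystalline character; the freedom to reorder the values $\{d_{i-1},\dots,d_i-1\}$ above each residue embedding (when $K/\Qp$ is ramified), together with a subsequent twist of each $V_i$ by a power of $\cyclo$, leaves room to arrange (a)--(c). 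When $K/\Qp$ is unramified condition (b) is vacuous and one simply picks small distinct weights realizing the prescribed residue class. Alternatively one may invoke the existence of crystalline lifts of suitable Serre weights of the $\Vbar_i$ from \cite{EGHS} directly.

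Given such $V_i$, Proposition~\ref{prop: existence of lifts, prescribed on inertia, for peu ram} (applied with $K'=K$, which is legitimate since each $V_i$ is crystalline over $K$ and each $\psi_i$ is unramified) produces a lift $r$ of $\rbar$ that is crystalline over $K$; by (a) and (b) its Hodge--Tate weights are distinct and equal to $\{0,1,\dots,n-1\}$ at every embedding except one above each $\kappabar$, and by (c) the exceptional weights are $p$-restricted, so $r$ is regular of Hodge type a lift of a $p$-restricted weight $a$, i.e.\ $r$ is a crystalline lift of the Serre weight $F_a$. The main obstacle is the bookkeeping just indicated --- checking that the single reduction-of-inertia congruence genuinely does not obstruct the simultaneous realization of (a), (b), (c) --- which is delicate when $K/\Qp$ is ramified; everything else is formal.
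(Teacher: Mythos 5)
Your proposal is correct and follows essentially the same route as the paper: lift each irreducible graded piece of the saturated filtration to a crystalline representation of Serre-weight type, twist so the Hodge--Tate weights of successive pieces are strictly increasing and their union is a lift of a $p$-restricted weight, and then apply Proposition~\ref{prop: existence of lifts, prescribed on inertia, for peu ram} with $K'=K$. The congruence bookkeeping for the irreducible pieces that you flag as the delicate point is exactly what the paper outsources to \cite[Thm~B.1.1]{EGHS} (and the fourth paragraph of its proof for the weight arrangement), which is the alternative you yourself mention.
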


  \begin{proof}
    When $\rbar$ is irreducible, this is straightforward from
    \cite[Thm~B.1.1]{EGHS}. (One only has to note that when $\rbar$ is
    irreducible, an obvious lift of $\rbar$ in the terminology of
    \cite[\S 7]{EGHS} is always an unramified twist of a true lift of
    $\rbar$.)

In the general case, suppose that $\rbar$ is peu
    ramifi\'ee with respect to the filtration  $\{\Ubar_i\}$, and as
    usual set $\Vbar_i := \Ubar_i/\Ubar_{i-1}$. By the previous
    paragraph, for each $\Vbar_i$ we are able to choose a crystalline lift
    $V_i$ of some Serre weight. By an argument as in the fourth
    paragraph of the proof of \cite[Thm~B.1.1]{EGHS} it is possible to
    arrange that every element of $\HT_{\kappa}(V_{i+1})$ is strictly greater
    than every element of $\HT_{\kappa}(V_i)$, and that $\oplus_i V_i$
    is a crystalline lift of $\oplus_i \Vbar_i$ of some Serre weight. 
 (This is just a matter of
    replacing each $V_i$ with a twist by a suitably-chosen crystalline
    character of trivial reduction.) Now the Corollary follows directly from Proposition~\ref{prop: existence of lifts, prescribed on inertia, for
    peu ram} (with $K' = K$).
\end{proof}

\subsection{Highly twisted lifts}\label{subsec:
  universally twisted lifts}

In this section we give a criterion (Proposition~\ref{prop:pr-htl}) for checking that a representation
is peu ramifi\'ee, which we will apply to show in Section~\ref{subsec: FL
  representations are universally twisted} that Fontaine--Laffaille
representations are peu ramifi\'ee.

\begin{defn}
  \label{defn: universally twisted lift} Suppose that $K/\Qp$ is a
finite extension.  Consider a representation
  $\rbar:G_K\to\GL_n(\Fpbar)$, let $\Vbar$ be the underlying
  $\Fpbar[G_K]$-module of $\rbar$, and suppose that $0 = \Ubar_0
  \subset \Ubar_1 \subset \dots \subset \Ubar_{\ell} = \Vbar$ is an
  increasing filtration on $\Vbar$ by $\Fpbar[G_K]$-submodules. Denote $\Vbar_i :=
  \Ubar_i/\Ubar_{i-1}$ for $1 \le i \le \ell$, the graded pieces of the filtration.

We say that \emph{$\rbar$ admits highly twisted lifts with
  respect to the filtration  $\{\Ubar_i\}$} if
there exist $\Zpbar$-lifts $V_i$ of the $\Vbar_i$, and a family
of $\Zpbar$-lifts $V(\psi_1,\ldots,\psi_\ell)$ of $\Vbar$ indexed by a nonempty
set $\Psi$ of
$\ell$-tuples of unramified characters
$\psi_i:G_K\to\Zpbartimes$
with trivial reduction modulo~$\m_{\Zpbar}$, having the
following additional properties:
\begin{itemize}
\item Each $V(\psi_1,\ldots,\psi_\ell)$ is equipped with an increasing filtration
  $\{U(\psi_1,\ldots,\psi_\ell)_i\}$ by $\Zpbar[G_K]$-submodules that are $\Zpbar$-direct summands.
\item  We have $U(\psi_1,\ldots,\psi_\ell)_i/U(\psi_1,\ldots,\psi_\ell)_{i-1}
  \cong  V_i \otimes \psi_i$ for each $1 \le i \le \ell$.
\item The isomorphism $V(\psi_1,\ldots,\psi_\ell) \otimes_{\Zpbar}
  \Fpbar \cong  \Vbar$ induces isomorphisms $U(\psi_1,\ldots,\psi_\ell)_i \otimes_{\Zpbar}
  \Fpbar \cong \Ubar_i$ for each $1 \le i \le \ell$.
\item $U(\psi_1,\ldots,\psi_{\ell})_i$ depends up to isomorphism only
  on $\psi_1,\ldots,\psi_i$ (that is, it does not depend on
  $\psi_{i+1},\ldots,\psi_\ell$).

\item For each $(\psi_1,\ldots,\psi_i)$ that extends to an element of
  $\Psi$ and for each $\epsilon > 0$,  there exists $\psi_{i+1}$ such
  that $(\psi_1,\ldots,\psi_{i+1})$ extends to an element of $\Psi$,
  with the further property that $0 < v_{\Qp}(\psi_{i+1}(\Frob_K)-1) < \epsilon$.
 \end{itemize}

If moreover the set $\Psi$ can be taken to be the set of all $\ell$-tuples of
unramified characters $\psi_i : G_K \to \Zpbar^{\times}$ with trivial
reduction modulo $\mathfrak{m}_{\Zpbar}$, we say that $\rbar$
\emph{admits universally twisted lifts with respect to the
  filtration $\{\Ubar_i\}$.}
\end{defn}

As with Definition~\ref{def:peu-ramif-fil}, the preceding definition is
most interesting in the case where the filtration
$\{\Ubar_i\}$ is saturated, and so we make the following further definition.

\begin{defn}
  \label{defn:utl-saturated}
We say that \emph{$\rbar$ admits highly {\upshape(}resp.\
  universally{\upshape)} twisted lifts} if it
admits highly (resp.\ universally) twisted lifts as in Definition~\ref{defn: universally twisted lift} with
  respect to some saturated filtration.
\end{defn}

\begin{rem}
  \label{rem:any filtration will do?} 
It is natural to ask whether, if $\rbar$
admits highly (resp.\ universally) twisted lifts with respect to some saturated filtration as
in Definition~\ref{defn:utl-saturated}, it admits
highly (resp.\ universally) twisted lifts with respect to any such
filtration. Proposition~\ref{prop:pr-htl} below, in combination with
Example~\ref{ex:order-matters}, gives a negative answer to this
question in the highly twisted case. 

In fact, Example \ref{ex:order-matters} also shows that the above question has a negative answer in the universally twisted
case.  Suppose for simplicity that $K/\Qp$ is unramified and that $p > 2$. Then $\rbar$ in Example \ref{ex:order-matters}
admits universally twisted lifts for the first filtration considered there. To see this, we first note that the first block $ \Ubar_2
= \begin{pmatrix}\omega & *_1 \\ & 1 \end{pmatrix}$ admits universally twisted lifts for $V_1 = \varepsilon$ and $V_2 =1$
by Proposition \ref{prop:fllift} below, because $ \Ubar_2$ is Fontaine--Laffaille. Since there is no nontrivial map $ \Ubar_2 \to \Vbar
_3 (1)$, one easily checks that $\bar r$ admits universally twisted lifts for this filtration. However, $\rbar$ does not admit
universally twisted lifts for the second filtration considered in Example \ref{ex:order-matters}. This is because
the first block $ \Ubar'_2 = \begin{pmatrix}\omega & *_2 \\ & 1 \end{pmatrix}$ does not admit universally twisted lifts
(e.g.\ by Proposition~\ref{prop:pr-htl}).
\end{rem}

\begin{prop}\label{prop:pr-htl}
  Let $K/\Qp$ be a finite extension, and let $\{\Ubar_i\}$ be an
  increasing filtration on the representation $\rbar : G_K \to
  \GL_n(\Fpbar)$. Then $\rbar$ is peu ramifi\'ee with respect to
  $\{\Ubar_i\}$ if and only if it admits highly twisted lifts with respect to $\{\Ubar_i\}$.
\end{prop}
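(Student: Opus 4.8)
The plan is to prove the two implications separately, with the ``peu ramifi\'ee $\Rightarrow$ highly twisted lifts'' direction essentially a repackaging of Theorem~\ref{thm: existence of lifts, prescribed on inertia, for peu ramifiee reps} (together with the quantitative control on the $\psi_i$ afforded by Proposition~\ref{prop:unram-image}), and the converse being the genuinely new content. For the forward direction, I would first choose $\Zpbar$-lifts $V_i$ of the graded pieces $\Vbar_i$ arbitrarily (say Teichm\"uller or any lifts), and then apply Theorem~\ref{thm: existence of lifts, prescribed on inertia, for peu ramifiee reps} to produce a nonempty set $\Psi$ of $\ell$-tuples of unramified characters $(\psi_1,\dots,\psi_\ell)$, with trivial reduction, for which $\rbar$ lifts to an $r$ filtered with graded pieces $V_i\otimes\psi_i$ exactly as demanded. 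The filtration $\{U_i\}$ by $\Zpbar$-direct summands is part of the output of that theorem, as is the fact that $U_i$ depends only on $\psi_1,\dots,\psi_i$ (since the construction is inductive, lifting $\Ubar_i$ before dealing with $\Vbar_{i+1}$), and that reduction mod $\m_{\Zpbar}$ recovers $\Ubar_i$. The only extra point to check is the last bullet of Definition~\ref{defn: universally twisted lift}, namely that $\psi_{i+1}$ can be taken with $0 < v_{\Qp}(\psi_{i+1}(\Frob_K)-1) < \epsilon$: but this is visible from the proof of Theorem~\ref{thm: existence of lifts, prescribed on inertia, for peu ramifiee reps}, where $\psi_{\ell}$ is chosen precisely as an unramified character with $v_E(\psi(\Frob_K)-1)$ small and nonzero (and the set of admissible such characters is cofinal as the bound shrinks, by the ``infinitely many choices, in the strong sense'' clause). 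So $\Psi$ witnesses that $\rbar$ admits highly twisted lifts with respect to $\{\Ubar_i\}$.

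For the converse, suppose $\rbar$ admits highly twisted lifts with respect to $\{\Ubar_i\}$, witnessed by lifts $V_i$ and a family $V(\psi_1,\dots,\psi_\ell)$ indexed by $\Psi$; I must show $\rbar$ is peu ramifi\'ee with respect to $\{\Ubar_i\}$, i.e. that for each $i$ the extension class $c_i \in H^1(G_K,\Hom_{\Fpbar}(\Vbar_i,\Ubar_{i-1}))$ of $\Ubar_i$ over $\Vbar_i$ by $\Ubar_{i-1}$ is annihilated by $H^1_{\ur}(G_K,\Hom_{\Fpbar}(\Ubar_{i-1},\Vbar_i(1)))$ under Tate duality. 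Fix $i$, and pick $(\psi_1,\dots,\psi_{i-1})$ extending to an element of $\Psi$, so that $\Ubar_{i-1}$ lifts to $U := U(\psi_1,\dots,\psi_{i-1})_{i-1}$. By the last bullet of the definition, I may choose $\psi_i$ with $0 < v_{\Qp}(\psi_i(\Frob_K)-1)$ arbitrarily small so that $(\psi_1,\dots,\psi_i)$ extends to an element of $\Psi$; then $U_i := U(\psi_1,\dots,\psi_\ell)_i$ lifts $\Ubar_i$ and is an extension of $V_i\otimes\psi_i$ by $U$. The existence of this $\Zpbar$-lift means that $c_i$ lies in the kernel of the connecting map
\[
\delta\colon H^1(G_K,\Hom_{\Fpbar}(\Vbar_i,\Ubar_{i-1})) \to H^2(G_K,\Hom_{\Zpbar}(V_i\otimes\psi_i,U))
\]
attached to multiplication by $p$ on $\Hom_{\Zpbar}(V_i\otimes\psi_i,U)$ — this is the standard obstruction-theoretic translation of ``the mod-$p$ extension lifts to a $\Zpbar$-extension of the prescribed graded pieces''. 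Dualising as in the proof of Theorem~\ref{thm: existence of lifts, prescribed on inertia, for peu ramifiee reps}, $\ker(\delta)^{\perp} = \im(\delta^{\vee})$ where
\[
\delta^{\vee}\colon H^0\!\big(G_K,\Hom_{\Zpbar}(U,V_i(1))\otimes\psi_i \otimes \Qpbar/\Zpbar\big) \to H^1(G_K,\Hom_{\Fpbar}(\Ubar_{i-1},\Vbar_i(1))).
\]
Now invoke Proposition~\ref{prop:unram-image}: for $\psi_i$ chosen with $v_{\Qp}(\psi_i(\Frob_K)-1)$ sufficiently small (smaller than $1/\mathrm{rank}$ of the relevant $\Hom$-module, after enlarging the coefficient field so $U$, $V_i$ are defined over a subfield with large ramification — which one is free to do, shrinking $\epsilon$ correspondingly), and after discarding the finitely many bad $\psi_i$ for which $(\Hom_{\Zpbar}(U,V_i(1))\otimes\psi_i)^{G_K}\ne 0$, the image of $\delta^{\vee}$ is exactly the unramified subspace $H^1_{\ur}(G_K,\Hom_{\Fpbar}(\Ubar_{i-1},\Vbar_i(1)))$. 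Since the last bullet of Definition~\ref{defn: universally twisted lift} guarantees arbitrarily small such $\psi_i$ inside $\Psi$, we can avoid all the finitely many bad choices; hence $\ker(\delta)^{\perp} \supseteq \im(\delta^{\vee}) = H^1_{\ur}$, and since $c_i \in \ker(\delta)$ we conclude $c_i$ is annihilated by $H^1_{\ur}(G_K,\Hom_{\Fpbar}(\Ubar_{i-1},\Vbar_i(1)))$, which is what ``peu ramifi\'ee'' demands at step $i$. Doing this for each $i$ finishes the proof.

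The main obstacle is the converse direction, and within it the need to run Proposition~\ref{prop:unram-image} ``at the right place'': one must check that the highly twisted lift one is handed can be taken to have $v_{\Qp}(\psi_i(\Frob_K)-1)$ below the threshold that makes that proposition applicable, and simultaneously avoid the finitely many $\psi_i$ with nonvanishing $G_K$-invariants — but this is precisely what the last, quantitative bullet of Definition~\ref{defn: universally twisted lift} was designed to supply, so it goes through. A minor bookkeeping point is matching up coefficient fields (passing from $\Zpbar$ to the ring of integers $\cO$ of a suitably ramified finite extension, and back), exactly as in the proofs of Theorem~\ref{thm: existence of lifts, prescribed on inertia, for peu ramifiee reps} and Proposition~\ref{prop:unram-image}; since the peu ramifi\'ee condition is insensitive to field extension (the base-change remark after Definition~\ref{def:peu-ramif-fil}), this causes no trouble.
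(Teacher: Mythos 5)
Your proposal is correct and follows essentially the same route as the paper: the forward direction by inspecting the proof of Theorem~\ref{thm: existence of lifts, prescribed on inertia, for peu ramifiee reps}, and the converse by showing the extension class at each step lies in $\ker(\delta)$ and then using the quantitative bullet of Definition~\ref{defn: universally twisted lift} to choose $\psi_i$ small and outside the finite bad set so that Proposition~\ref{prop:unram-image} identifies $\im(\delta^\vee)$ with the unramified classes. The only cosmetic difference is that the paper organises the converse as an induction on the filtration length rather than treating each $i$ directly, which amounts to the same argument.
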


\begin{proof}
An inspection of the proof of Theorem~\ref{thm: existence of lifts, prescribed on inertia, for
    peu ramifiee reps} already gives the ``only if'' implication (for
  \emph{any} choice of $V_i$'s lifting $\Vbar_i$).

For the other direction, we assume that $\rbar$ admits highly
twisted lifts with respect to the filtration $\{\Ubar_i\}$ and some $\Zpbar$-lifts $V_i$ of the $\Vbar_i$.
We proceed by induction on $\ell$, the length
of the filtration. By the induction hypothesis we may assume that for
all $i < \ell$ the class in
  $H^1(G_K,\Hom_{\F}(\Vbar_i,\Ubar_{i-1}))$  defined by $\Ubar_i$ is annihilated under Tate local
  duality by
  $H^1_{\ur}(G_K,\Hom_{\F}(\Ubar_{i-1},\Vbar_i(1)))$, and it remains
  to prove this for $i = \ell$. Choose any
  $(\psi_1,\ldots,\psi_{\ell-1})$ that extends to an element of the set $\Psi$
  (as in Definition~\ref{defn: universally twisted lift} for $\rbar$), and let
  $U := U(\psi_1,\dots,\psi_{\ell})_{\ell-1}$ (where $\psi_{\ell}$ is any
  character such that $(\psi_1,\ldots,\psi_\ell) \in \Psi$); note that
  this is independent of $\psi_{\ell}$.

Let
  $\mathcal{S}$ be the set of characters $\psi : G_K \to
  \Zpbar^{\times}$ such that $(\Hom_{\Zpbar}(U,V_{\ell}(1)) \otimes_{\Zpbar} \Zpbar(\psi))^{G_K} \neq
  0$.  As in the proof of Theorem~\ref{thm: existence of lifts, prescribed on inertia, for
    peu ramifiee reps} we see that $\mathcal{S}$ is finite. Let $E/\Qp$ be a finite extension such
  that $U$ and $V_{\ell}$ are realisable over $\cO_E$.   It
  follows from the
  highly twisted lift condition on $\rbar$ that there exists
  $\psi_\ell$ having the following properties:
  \begin{enumerate}
  \item[(i)]  $(\psi_1,\ldots,\psi_\ell) \in \Psi$,
 \item[(ii)] $\psi_\ell \not \in \mathcal{S}$, and
\item[(iii)] $0 < v_E(\psi_\ell(\Frob_K)-1) < 1/(\dim \Ubar_{\ell-1})(\dim \Vbar_{\ell})$.
  \end{enumerate}

Let $F/E$ be a finite extension over which $\psi_\ell$ and
$V(\psi_1,\ldots,\psi_{\ell})$ are both realisable.  Write $\cO$ for the ring of integers of $F$, and $\F$ for its
residue field.   For the remainder of this proof, when we
  write $U$, $V_{\ell}$, $\psi_{\ell}$  we will mean their
  chosen realisations over $F$, and similarly for
$\Ubar$, $\Vbar_{\ell}$ over $\F$ (obtained by reduction).

Set $X =
  \Hom_{\cO}(U,V_{\ell}(1))$.
 As in the proof of Theorem~\ref{thm: existence of lifts, prescribed on inertia, for
    peu ramifiee reps},  write $\delta$ for the connection map
\[ H^1(G_K,\Hom_{\F}(\Vbar_{\ell},\Ubar))\stackrel{\delta}{\to}
H^2(G_K,\Hom_{\cO}(V_{\ell}\otimes_{\cO}\psi_\ell,  U)).\]
The existence of the lift
$V(\psi_1,\ldots,\psi_{\ell})$ (i.e.\ the property (i) of $\psi_\ell$) shows that the class
 $c \in H^1(G_K,\Hom_{\F}(\Vbar_\ell,\Ubar))$ defining $\rbar$ lies in
 $\ker(\delta)$. On the other hand, the properties (ii) and (iii) of
 $\psi_\ell$ mean that Proposition~\ref{prop:unram-image} applies (with
$\psi_\ell$ playing the role of $\psi$) to show that the dual map
$\delta^\vee$ has image
$H^1_{\ur}(G_K,X\otimes_{\cO} \F)$.  Since $c\in \ker(\delta)$ it is annihilated under
Tate local duality by this image, and we deduce that $\rbar$ is peu ramifi\'ee.
\end{proof}

  \begin{cor}
Suppose that $\rbar$ admits highly twisted lifts with respect to
the filtration $\{\Ubar_i\}$. Then $\rbar$ satisfies the definition of
admitting highly twisted lifts with respect to
the filtration $\{\Ubar_i\}$ for \emph{any} lifts $V_i$ of
the $\Vbar_i$.
  \end{cor}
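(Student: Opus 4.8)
The plan is to route the argument through Proposition~\ref{prop:pr-htl}, which decouples the peu ramifi\'ee condition (involving no choice of lifts of the graded pieces) from the highly twisted lift condition. First I would apply the implication ``admits highly twisted lifts $\Rightarrow$ peu ramifi\'ee'' of Proposition~\ref{prop:pr-htl} to the hypothesis, concluding that $\rbar$ is peu ramifi\'ee with respect to $\{\Ubar_i\}$. Now fix \emph{any} collection of $\Zpbar$-lifts $V_i$ of the $\Vbar_i$; I would then invoke Theorem~\ref{thm: existence of lifts, prescribed on inertia, for peu ramifiee reps} with precisely this choice of $V_i$, and take $\Psi$ to be the set of all $\ell$-tuples of unramified characters $\psi_i : G_K \to \Zpbartimes$ with trivial reduction for which the conclusion of that theorem holds. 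Nonemptiness of $\Psi$ and the existence, for $(\psi_1,\dots,\psi_\ell) \in \Psi$, of a lift $V(\psi_1,\dots,\psi_\ell)$ of $\Vbar$ equipped with a filtration by $\Zpbar$-direct summands having graded pieces $V_i \otimes \psi_i$ and inducing the filtration $\{\Ubar_i\}$ on reduction, are exactly the content of the theorem.

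Next I would verify the remaining bullet points of Definition~\ref{defn: universally twisted lift}. That $U(\psi_1,\dots,\psi_\ell)_i$ depends up to isomorphism only on $\psi_1,\dots,\psi_i$ is visible from the inductive construction in the proof of Theorem~\ref{thm: existence of lifts, prescribed on inertia, for peu ramifiee reps}: there one first lifts $\Ubar_{i-1}$ and then builds the $i$-th step as an extension on top of that lift, so the later characters play no role. The bullet asserting that each partial tuple $(\psi_1,\dots,\psi_{i-1})$ which extends to an element of $\Psi$ does so for infinitely many choices of $\psi_i$ is, verbatim, the ``strong sense'' clause in the statement of the theorem.

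The one point that does not come for free from the \emph{statement} of Theorem~\ref{thm: existence of lifts, prescribed on inertia, for peu ramifiee reps}, and the step I would flag as requiring a little care, is the final bullet of Definition~\ref{defn: universally twisted lift}: that $\psi_{i+1}$ can be chosen with $v_{\Qp}(\psi_{i+1}(\Frob_K)-1)$ smaller than a prescribed $\epsilon$. For this I would look inside the inductive step of the proof of the theorem, where, having fixed $\psi_1,\dots,\psi_i$ (hence the module $U$), one is free to enlarge the finite extension $F/E$, and then for all but finitely many unramified $\psi$ with $0 < v_E(\psi(\Frob_K)-1) < 1/(\dim\Vbar_{i+1})(\dim\Ubar_i)$ the construction succeeds. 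Since $E$ is fixed and $v_{\Qp} = v_E/e(E/\Qp)$, enlarging $F$ drives the smallest available positive value of $v_{\Qp}(\psi(\Frob_K)-1)$ to $0$; so for a given $\epsilon$ I would take $F$ with $e(F/\Qp)$ large, pick $\psi_{i+1}$ with $\psi_{i+1}(\Frob_K)-1$ of $v_E$-valuation $1/e(F/E)$ and avoiding the finitely many bad characters, and check that $0 < v_{\Qp}(\psi_{i+1}(\Frob_K)-1) < \epsilon$. With this last bullet in hand, Definition~\ref{defn: universally twisted lift} is satisfied for the arbitrary chosen $V_i$, which is the assertion of the corollary. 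I do not anticipate a genuine obstacle; the only subtlety is that the $\epsilon$-smallness is a feature of the \emph{proof} of Theorem~\ref{thm: existence of lifts, prescribed on inertia, for peu ramifiee reps} rather than of its statement.
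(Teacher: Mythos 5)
Your proposal is correct and is essentially the paper's own argument, just spelled out: the paper's proof reads ``immediate from Proposition~\ref{prop:pr-htl} along with the first sentence of its proof,'' i.e.\ one first deduces that $\rbar$ is peu ramifi\'ee with respect to $\{\Ubar_i\}$ and then re-runs the peu ramifi\'ee~$\Rightarrow$~highly twisted direction, which (by inspection of the proof of Theorem~\ref{thm: existence of lifts, prescribed on inertia, for peu ramifiee reps}) works for any choice of the $V_i$. Your careful remark that the $\epsilon$-smallness in the last bullet of Definition~\ref{defn: universally twisted lift} comes from the proof of that theorem rather than its statement is exactly the content of the phrase ``an inspection of the proof.''
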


  \begin{proof}
    This is immediate from Proposition~\ref{prop:pr-htl} along with     the first sentence of its proof.
  \end{proof}
\begin{remark} 
The above corollary fails if we replace `highly twisted' with
`universally twisted'. For instance, consider Example
\ref{examples-pr}(1) with $K/\Qp$ unramified, the extension class $*$ peu ramifi\'ee, and
$\chi =1$. It admits universally twisted lifts if we set $V_1 =
\varepsilon$ and $V_2 =1$. (This will follow from Proposition~\ref{prop:fllift} below.) But it does not admit universally twisted lifts for $V_1 = \varepsilon^p$ and $V_2=1$.
\end{remark}
\begin{remark}
We do not know whether there exist representations that admit highly twisted lifts but not universally twisted lifts.
  \end{remark}

\subsection{Fontaine--Laffaille representations}\label{subsec: FL
  representations are universally twisted}In this section we will
prove that representations which admit a Fontaine--Laffaille
lift also admit universally twisted lifts, and so by Proposition~\ref{prop:pr-htl} are
peu ramifi\'ee. We begin by
recalling the formulation of unipotent Fontaine--Laffaille theory
in~\cite[\S1.1.2]{MR2103471}. Throughout this section let $K/\Qp$ be a finite
unramified extension with integer ring $\cO_K$, and write $\Frob_p$
for the absolute geometric Frobenius on~$K$.

Let~$\cO$ be the ring of integers in~$E$, a
finite extension of~$\Qp$ with residue field~$\F$. We assume that $E$
is sufficiently large as to contain the image of some (hence any) continuous embedding of $K$ into an algebraic closure of $E$.
Fix an integer $0 \le h \le p-1$, and let $\mc{MF}^h_{\cO}$
denote the category of finitely generated $\mc{O}_K \otimes_{\Zp}
\cO$-modules $M$ together with
\begin{itemize}
\item a decreasing filtration $\Fil^s M$ by $\mc{O}_K
  \otimes_{\Zp} \cO$-submodules which are $\mc{O}_K$-direct
  summands with $\Fil^0 M = M$ and $\Fil^{h+1}M=\{0\}$;
\item and $\Frob_p^{-1} \otimes 1$-linear maps $\Phi^s: \Fil^s M \rightarrow
  M$ with $\Phi^s|_{\Fil^{s+1}M} = p \Phi^{s+1}$ and $\sum_s \Phi^s
  (\Fil^s M) = M$.
\end{itemize}
We say that an object $M$ of $\mc{MF}^{p-1}_{\cO}$ is \emph{\'etale} if
$\Fil^{p-1} M = M$, and define $\mc{MF}^{p-1,u}_{\cO}$ to be the full
subcategory of $\mc{MF}^{p-1}_{\cO}$ consisting of objects with no nonzero
\'etale quotients. 
Such objects are said to be \emph{unipotent}.
Note that $\mc{MF}_{\cO}^{p-2}$ is a subcategory of $\mc{MF}_{\cO}^{p-1,u}$.

In the following paragraphs, let $\mc{MF}_{\cO}$ denote either
$\mc{MF}_{\cO}^h$ (for $0 \le h \le p-2$) or $\mc{MF}_{\cO}^{p-1,u}$
(for $h=p-1$). 
Let $\Rep_{\cO}(G_K)$ denote the category of finitely generated
$\cO$-modules with a continuous $G_K$-action.  There is an exact,
fully faithful, covariant functor of $\cO$-linear categories
$T_K : \mc{MF}_{\cO} \rightarrow \Rep_{\cO}(G_K)$. This is the functor
denoted $\mathbb{V}$ in \cite[\S1.1.2]{MR2103471}. 
The essential image of $T_K$ is closed under taking
subquotients. 
If $M$ is an object of $\mc{MF}_{\cO}$,
then the length of $M$ as an $\cO$-module is $[K : \Qp]$
times the length of $T_K(M)$ as an $\cO$-module.

Let $\mc{MF}_{\F}$ denote the full subcategory of $\mc{MF}_\cO$
consisting of objects killed by the maximal ideal of $\cO$ and let
$\Rep_{\F}(G_K)$ denote the category of finite $\F$-modules with a
continuous $G_K$-action. Then $T_K$ restricts to a functor
$\mc{MF}_{\F} \rightarrow \Rep_{\F}(G_K)$.  If $M$ is an object of
$\mc{MF}_{\F}$ and $\kappa$ is a continuous embedding $K \into \Qbar_p$,
we let $\FL_\kappa(M)$ denote the multiset of integers $i$ such that
$ \gr^i M\otimes_{\cO_K\otimes_{\Zp}\cO, \kappa\otimes 1}\cO \neq \{0\}$
and $i$ is counted with multiplicity equal to the $\F$-dimension of
this space. If $M$ is a $p$-torsion free object of $\mc{MF}_{\cO}$
then $T_K(M) \otimes_{\Z_p}\Q_p$ is crystalline and for every
continuous embedding $\kappa:K \into \Qbar_p$ we have
\[ \HT_\kappa(T_K(M) \otimes_{\Z_p}\Q_p) = \FL_\kappa(M
\otimes_{\cO} \F). \]

Moreover, if $\Lambda$ is a $G_K$-invariant lattice in a crystalline representation $V$ of $G_K$
with all its Hodge--Tate numbers in the range $[0,h]$, having (when
$h=p-1$) no nontrivial
quotient isomorphic to a twist of an unramified
representation by $\cyclo^{-(p-1)}$, then $\Lambda$
is in the essential image of $T_K$.  If some twist of $\rbar:G_K\to\GL_n(\F)$ lies in the essential image
of $T_K$ on $\mc{MF}_{\cO}^{p-2}$, we say that $\rbar$ \emph{admits  a
Fontaine--Laffaille lift}, while if some twist of  $\rbar$ lies in the essential image of
$T_K$ on $\mc{MF}_{\cO}^{p-1,u}$ we say that it  \emph{admits a unipotent extended Fontaine--Laffaille
lift}. 

The proof of the following result is essentially the same as that of
\cite[Lem.~1.4.2]{BLGGT}. (We remark that \cite[\S1.4]{BLGGT} uses the
formulation of Fontaine--Laffaille theory as \cite[\S2.4.1]{cht},
which in fact is equivalent to that of \cite[\S1.1.2]{MR2103471} (at
least on $\mc{MF}^{p-2}_{\cO}$), although this equivalence is not
needed for the following argument.)

\begin{prop}
  \label{prop:fllift} Let $K/\Qp$ be
  unramified.  Consider a representation
  $\rbar:G_K\to\GL_n(\Fpbar)$ with an increasing filtration
  $\{\Ubar_i\}$ such that $\Ubar_0=0$ and $\Ubar_{\ell} = \rbar$, so that
$\rbar$ may be written as  \[\rbar=
  \begin{pmatrix}
    \overline{V}_1  &\dots & *\\
& \ddots &\vdots\\
&&\overline{V}_{\ell}
  \end{pmatrix},\]where the $\overline{V}_i = \Ubar_{i}/\Ubar_{i-1}$
  are the graded pieces of the filtration.

Suppose that~$\rbar$ admits a Fontaine--Laffaille \emph{(}resp.\
unipotent extended Fontaine--Laffaille\emph{)} lift.
 Then $\rbar$ admits universally twisted lifts with
  respect to the filtration $\{\Ubar_i\}$; indeed, it admits universally twisted
  Fontaine--Laffaille \emph{(}resp.\ unipotent extended
  Fontaine--Laffaille\emph{)} lifts. In either case $\rbar$ is peu
  ramifi\'ee. 
  \end{prop}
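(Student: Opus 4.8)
The plan is to reduce the statement to Fontaine--Laffaille theory applied at the level of the full module $M$ giving rise to $\rbar$ (or a twist thereof), and then to produce the twisted lifts by twisting $M$ by rank-one objects of $\mathcal{MF}$ that realise unramified characters. First I would use the hypothesis that $\rbar$ (after twisting by some $\cyclo^m$, which does not affect the statement since twisting by a character commutes with everything in sight) lies in the essential image of $T_K$ on $\mathcal{MF}^{p-2}_{\cO}$ (resp.\ $\mathcal{MF}^{p-1,u}_{\cO}$); pick an object $\Mbar$ of $\mathcal{MF}_{\F}$ with $T_K(\Mbar) \cong \rbar$ (up to the chosen twist). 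Since the essential image of $T_K$ is closed under subquotients, the filtration $\{\Ubar_i\}$ on $\rbar$ corresponds to a filtration $\{\Mbar_i\}$ on $\Mbar$ by subobjects in $\mathcal{MF}_{\F}$, with graded pieces $\Mbar_i/\Mbar_{i-1}$ mapping to $\Vbar_i$ (again up to twist). One then lifts $\Mbar$ to a $p$-torsion free object $M$ of $\mathcal{MF}_{\cO}$: this is possible because $\mathcal{MF}_{\cO}$ is (in the relevant range) an abelian-like category in which torsion-free lifts of finite-length objects exist — concretely, one lifts the underlying module together with its filtration and $\Phi$-structure, a routine deformation-theoretic argument since the relevant obstruction groups over $\cO_K \otimes \cO$ vanish (it is free). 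This gives a crystalline lift $V := T_K(M)\otimes\Qp$ of the appropriate twist of $\rbar$, filtered by $V_i := T_K(M_i/M_{i-1})\otimes\Qp$.

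The second, and main, ingredient is the twisting construction. For an unramified character $\psi : G_K \to \cO^\times$ (or $\Zpbar^\times$) with trivial reduction, I would exhibit a rank-one object $M(\psi)$ of $\mathcal{MF}^0_{\cO}$ — i.e.\ a free rank-one $\cO_K\otimes\cO$-module with trivial filtration (so $\Fil^0 = $ everything, $\Fil^1 = 0$) and with $\Phi^0$ given by a suitable unit — such that $T_K(M(\psi)) \cong \psi$ (or its reciprocal, depending on normalisations; one should check the cyclotomic normalisation carefully here). The existence and shape of such $M(\psi)$ is standard: unramified characters correspond to the Frobenius eigenvalue, and any unit in $\cO_K\otimes\cO$ with the right norm is realisable, so \emph{every} unramified $\psi$ of trivial reduction arises this way — this is precisely what makes the lifts \emph{universally} (not just highly) twisted. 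Then $M \otimes_{\cO_K\otimes\cO} (\text{iterated extension of the } M(\psi_i)\text{'s along the filtration})$ — more precisely, one builds, for any tuple $(\psi_1,\dots,\psi_\ell)$, an object $M(\psi_1,\dots,\psi_\ell)$ of $\mathcal{MF}_{\cO}$ by keeping the underlying module and filtration of $M$ but modifying $\Phi$ on the graded pieces so that $M_i/M_{i-1}$ gets twisted by $M(\psi_i)$; since the $\Phi$-structure is only required to satisfy $\sum_s \Phi^s(\Fil^s) = M$ and the compatibility $\Phi^s|_{\Fil^{s+1}} = p\Phi^{s+1}$, and multiplying the block-diagonal part by units of trivial reduction preserves these, this is legitimate, and it reduces mod $\varpi$ to $\Mbar$. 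Applying $T_K$ and untwisting by $\cyclo^m$ gives $V(\psi_1,\dots,\psi_\ell)$, a (unipotent extended) Fontaine--Laffaille lift of $\rbar$ with the prescribed filtration structure $U(\psi_1,\dots,\psi_\ell)_i$, and $U(\psi_1,\dots,\psi_\ell)_i$ depends only on $\psi_1,\dots,\psi_i$ since it is $T_K$ of $M_i$ with its first $i$ graded pieces twisted.

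It then remains to verify the axioms of Definition~\ref{defn: universally twisted lift}: that each $V(\psi_1,\dots,\psi_\ell)\otimes_{\Zpbar}\Fpbar \cong \Vbar$ compatibly with the filtrations (clear from the construction, since everything reduces to $\Mbar$), that $U_i/U_{i-1} \cong V_i\otimes\psi_i$ (clear, as $T_K$ is exact and $T_K(M(\psi_i)) = \psi_i$), and the approximation property that for any $(\psi_1,\dots,\psi_i)$ and any $\epsilon > 0$ one can choose $\psi_{i+1}$ with $0 < v_{\Qp}(\psi_{i+1}(\Frob_K) - 1) < \epsilon$ extending to an element of $\Psi$ — but since $\Psi$ is \emph{all} tuples of unramified characters of trivial reduction, this is trivial. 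Finally, since the set $\Psi$ is everything, $\rbar$ admits universally twisted lifts with respect to $\{\Ubar_i\}$, and by Proposition~\ref{prop:pr-htl} it is peu ramifi\'ee (universally twisted implies highly twisted implies, by that proposition, peu ramifi\'ee). The main obstacle I anticipate is bookkeeping with normalisations and twists: one must track the global twist $\cyclo^m$ needed to bring $\rbar$ into the Fontaine--Laffaille range, confirm that the resulting Hodge--Tate weights after untwisting still realise a Fontaine--Laffaille (or extended, unipotent) lift as claimed, and make sure the rank-one objects $M(\psi)$ are genuinely in $\mathcal{MF}^0$ (so that the tensor product lands in $\mathcal{MF}^{p-2}$ resp.\ $\mathcal{MF}^{p-1,u}$ — twisting by a Hodge--Tate weight $0$ object doesn't enlarge the filtration range, and preserves the no-\'etale-quotient condition since $\psi$ has trivial reduction). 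The actual verification of functoriality and exactness is precisely the content of the cited \cite[Lem.~1.4.2]{BLGGT}, whose proof transfers verbatim.
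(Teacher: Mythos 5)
Your proposal is correct and follows essentially the same route as the paper's proof: twist $\rbar$ into the image of $T_K$, transport the filtration $\{\Ubar_i\}$ to a filtration by subobjects of the corresponding object $\Mbar$ of $\mc{MF}_{\F}$, lift to a $p$-torsion free filtered object $M$ (the paper does this explicitly via a basis adapted simultaneously to $\{\Mbar_j\}$ and to $\Fil^s$, rather than by an abstract obstruction-vanishing argument, but this is the same content as \cite[Lem.~1.4.2]{BLGGT} which you cite), and realise $V(\psi_1,\dots,\psi_\ell)$ by rescaling the maps $\Phi^s$ on the $j$-th graded piece by the unit $\psi_j(\Frob_K)$. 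The only cosmetic difference is that the paper rescales the full vectors $\Phi^s e_{i,\kappa}$ rather than literally tensoring graded pieces with rank-one objects, but both constructions reduce to $\Mbar$ modulo $\varpi$ and yield the required universally twisted family.
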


  \begin{rem}
  \label{rem:nilpotent-FL}
  By duality, the same result holds when $\rbar$ admits a
  \emph{nilpotent} extended Fontaine--Laffaille lift, i.e.,
  if some twist of $\rbar$ lies in the essential image of $T_K$ on $\mc{MF}_{\cO}^{p-1,n}$,
  the full subcategory of $\mc{MF}_{\cO}^{p-1}$ whose objects admit no nonzero subobject $M$ with
  $\Fil^{1} M = 0$. We refer the reader to \cite{GaoLiu12} for a
  further discussion of nilpotent Fontaine--Laffaille theory.

Similar arguments (which we omit to keep the paper at a reasonable
length) can be used to show that the same result holds when 
$\rbar$ is finite
flat (for arbitrary $K/\Qp$; in this case the argument uses Kisin
modules).
  \end{rem}

  \begin{proof}[Proof of Proposition~\ref{prop:fllift}] Since the
    truth of this proposition for $\rbar$ evidently implies its truth
    for any twist of $\rbar$ (using the fact that every character of $G_K$ admits
    a crystalline lift), we reduce to the case that $\rbar$ lies in
    the essential image of $T_K$ on $\mc{MF}_{\cO}^{p-2}$ (or on
    $\mc{MF}_{\cO}^{p-1,u}$, in the unipotent extended case).

  The case that each~$\overline{V}_i$ is one-dimensional
    is essentially found in~\cite[Lem.\ 1.4.2]{BLGGT} and, as
    previously remarked, we will follow the
    proof of that result closely. We can and do suppose that $\rbar$
    is defined over some finite field~$\F$, and we fix a finite
    extension~$E$ of~$\Qp$ with ring of integers~$\cO$ and residue
    field~$\F$. 

Let $\overline{V}$ be the underlying $\F$-vector space of~$\rbar$, and
let $\barM$ denote the object of $\mc{MF}_{\F}$ corresponding
to~$\Vbar$, which exists by our assumption that~$\rbar$ has a
(possibly unipotent extended) Fontaine--Laffaille lift. Then we have a filtration
  \[ \barM=\barM_\ell \supset \barM_{\ell-1} \supset \dots \supset \barM_{1} \supset \barM_0=(0) \]
  by $\mc{MF}_{\F}$-subobjects such that $\barM_{i}$ corresponds to
    $\Ubar_i$ and  so $\barM_{i}/\barM_{i-1}$
  corresponds to~$\Vbar_i$. Then we claim that we can find an object $M$ of $\mc{MF}_{\cO}$ which
  is $p$-torsion free together with a filtration by $\mc{MF}_{\cO}$-subobjects
  \[ M=M_\ell \supset M_{\ell-1} \supset \dots \supset M_{1} \supset M_0=(0) \]
  and an isomorphism
  \[ M \otimes_{\cO} \F \cong \barM \]
  under which $M_i \otimes_{\cO} \F$ maps isomorphically to $\barM_i$ for all $i$.

Write $d_i:=\dim \Vbar_i$. We note first that $\barM$ has an
$\F$-basis $\bare_{i,\kappa}$ for $i=1,\dots,n$ and $\kappa \in \Hom_{\Qp}(K,
\Qpbar)$ such that
  \begin{itemize}
  \item the residue field $k$ of $K$ acts on $\bare_{i,\kappa}$ via $\kappa$;
  \item $\barM_j$ is spanned over $\F$ by the $\bare_{i,\kappa}$ for $i
    \le d_1+\dots+d_j$;
  \item  and for each $j,s$ there is a subset $\Omega_{j,s} \subset \{1,\dots,n\} \times \Hom_{\Qp}(K,\Qbar_p)$ such that
  $\barM_j \cap \Fil^s \barM$ is spanned over $\F$ by the $\bare_{i,\kappa}$ for $(i,\kappa) \in \Omega_{j,s}$.
  \end{itemize}
(Such a basis is easily constructed recursively in $j$. The
case~$j=1$ is trivial, and it is straightforward to extend a basis of
this kind for~$\barM_{j-1}$ to one for~$\barM_j$.)  We put $\Omega_s := \Omega_{\ell,s}$.

  Then we define $M$ to be the free $\cO$-module with basis $e_{i,\kappa}$ for $i=1,\dots,n$ and $\kappa \in \Hom_{\Qp}(K, \Qbar_p)$.
  \begin{itemize}
  \item We let $\cO_K$ act on $e_{i,\kappa}$ via $\kappa$;
  \item we define $M_j$ to be
  the  $\cO$-submodule generated by the  $e_{i,\kappa}$ with $i \le d_1+\dots+d_j$;
  \item and
  we define $\Fil^s M$ to be the $\cO$-submodule spanned by the $e_{i,\kappa}$ for $(i,\kappa) \in \Omega_s$.
  \end{itemize}
  We define $\Phi^s:\Fil^s M \to M$ by reverse induction on $s$. If we have defined $\Phi^{s+1}$
  we define $\Phi^s$ as follows:
  \begin{itemize}
  \item If $(i,\kappa) \in \Omega_{s+1}$ then $\Phi^s e_{i,\kappa}=p\Phi^{s+1}e_{i,\kappa}$.
  \item If $(i,\kappa) \in \Omega_s-\Omega_{s+1}$ then $\Phi^s
    e_{i,\kappa}$ is chosen to be any lift of $\barPhi^s\bare_{i,\kappa}$
    in $\sum_{i' \le d_1 + \dots + d_j} \cO \cdot e_{i',\kappa \circ
      \Frob_p}$, where $j$ is minimal such that $i
    \le d_1 + \dots + d_j$.
  \end{itemize}
  It follows from Nakayama's lemma that $M$ is an object of
  $\mc{MF}^{h}_{\cO}$.  When $h=p-1$, suppose that $M \to M'$ is
  a nontrivial \'etale quotient of $M$. We can without loss of
  generality replace $M'$ with $M'
  \otimes_{\cO} \F$; but then the map $M \to M'$ would factor through
  $\barM$, contradicting the assumption that $\barM$ is an object of
  $\mc{MF}_{\cO}$ (and not just $\mc{MF}_{\cO}^{p-1}$). It follows that
  $M$ is also an object of $\mc{MF}_{\cO}$. In the same way we see that
  $\{M_i\}$ is an increasing filtration of subobjects of $M$ in $\mc{MF}_{\cO}$.

It is immediate that
  $M$ verifies the desired property that $M_i
  \otimes_{\cO} \F$ maps isomorphically to $\Mbar_i$ under the
  isomorphism $M \otimes_{\cO} \F \cong \Mbar$.

Set $V_i:= T_K(M_{i}/M_{i-1}) \otimes_{\cO} \Zpbar$. We claim that for this choice
of~$V_i$, the conditions of Definition~\ref{defn: universally
  twisted lift} are satisfied. Since Fontaine--Laffaille theory is
compatible in an obvious fashion with extension of scalars from~$E$ to
a finite extension of~$E$, we can and do suppose that the
characters~$\psi_i$ are valued in~$\cO^\times$. Then the objects
of~$\mc{MF}_{\cO}$ corresponding to the desired lifts $V(\psi_1,\ldots,\psi_\ell)$
are obtained from $M$ by rescaling the maps~$\Phi^s$. More precisely,
if we
let $M(\psi_1,\dots,\psi_\ell)$ be defined from~$M$ by  rescaling $\Phi^s e_{i,\kappa}$ by
$\psi_j(\Frob_K)$ for  $(i,\kappa) \in
\Omega_{j,s} \setminus \Omega_{j-1,s}$,
then one can take $V(\psi_1,\dots,\psi_\ell) =
T_K(M(\psi_1,\dots,\psi_\ell)) \otimes_{\cO} \Zpbar$. 
(To establish the second bullet point in Definition~\ref{defn:
  universally twisted lift}, note from \cite[p.~670]{MR2103471} that
$T_K$ is compatible with tensor products, and use (1) of \emph{loc.\
  cit.}\ to compute the Fontaine--Laffaille module corresponding to
each $\psi_i$.)
  \end{proof}

\begin{cor}
  \label{cor: existence of lifts, prescribed on inertia, for
    FL} Suppose that $\rbar$ admits a {\upshape(}possibly unipotent extended{\upshape)}
  Fontaine--Laffaille lift. Then the conclusions of Theorem~\ref{thm: existence of lifts, prescribed on inertia, for
    peu ramifiee reps} and Proposition~\ref{prop: existence of lifts, prescribed on inertia, for
    peu ram} hold for $\rbar$ with respect to any separated,
  exhaustive increasing filtration
  $\{\Ubar_i\}$ on $\rbar$.
\end{cor}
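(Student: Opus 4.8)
The plan is to deduce this immediately from the results already proved for peu ramifi\'ee representations; the only work is to check that a Fontaine--Laffaille lift supplies the required input for an \emph{arbitrary} separated, exhaustive increasing filtration. (Note that $K/\Qp$ is unramified here, as is implicit in the hypothesis that $\rbar$ admits a Fontaine--Laffaille lift.) First I would note that, since $\rbar$ is finite-dimensional, any such filtration $\{\Ubar_i\}$ stabilises at both ends; after re-indexing and discarding repeated terms it may be taken of the form $0 = \Ubar_0 \subset \Ubar_1 \subset \dots \subset \Ubar_\ell = \Vbar$, which is precisely the shape of filtration treated in Proposition~\ref{prop:fllift}.

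Next I would invoke Proposition~\ref{prop:fllift}: since $\rbar$ admits a (possibly unipotent extended) Fontaine--Laffaille lift, it admits universally --- hence a fortiori highly --- twisted lifts with respect to $\{\Ubar_i\}$, and in particular (by the last sentence of that proposition, or equivalently by feeding these highly twisted lifts into Proposition~\ref{prop:pr-htl}) it is peu ramifi\'ee with respect to $\{\Ubar_i\}$.

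Finally I would observe that the only condition imposed on $\rbar$ in Theorem~\ref{thm: existence of lifts, prescribed on inertia, for peu ramifiee reps} and in Proposition~\ref{prop: existence of lifts, prescribed on inertia, for peu ram} is precisely that $\rbar$ be peu ramifi\'ee with respect to the given increasing filtration; the remaining hypotheses of the Proposition (potential crystallinity of the $V_i$ and strictly increasing Hodge--Tate weights along the filtration) constrain the chosen lifts $V_i$, not $\rbar$ itself. Thus both conclusions transfer verbatim to $\rbar$ equipped with $\{\Ubar_i\}$.

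I do not anticipate a genuine obstacle: the content of the corollary is not a new argument but the observation that the Fontaine--Laffaille property upgrades ``peu ramifi\'ee with respect to some saturated filtration'' to ``peu ramifi\'ee with respect to every separated, exhaustive increasing filtration,'' which is exactly what Theorem~\ref{thm: existence of lifts, prescribed on inertia, for peu ramifiee reps} and Proposition~\ref{prop: existence of lifts, prescribed on inertia, for peu ram} require. The one mildly fussy point is the normalisation of the filtration in the first step, which is purely bookkeeping.
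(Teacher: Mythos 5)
Your proposal is correct and matches the paper's intended (and unwritten) proof: the corollary is treated as an immediate consequence of Proposition~\ref{prop:fllift}, which establishes universally (hence highly) twisted lifts with respect to an \emph{arbitrary} increasing filtration, whence $\rbar$ is peu ramifi\'ee with respect to that filtration by Proposition~\ref{prop:pr-htl}, and the Theorem and Proposition then apply. Your normalisation of the filtration and the observation that the remaining hypotheses constrain only the chosen lifts $V_i$ are exactly the right bookkeeping.
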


\begin{cor}\label{cor:ld-sadr}
  Suppose that $\rbar$ admits a {\upshape(}possibly unipotent extended{\upshape)}
  Fontaine--Laffaille lift. Then the conclusion of Corollary~\ref{cor: peu ramimplies HT type whatever
    and bounded tame type} holds for $\rbar$.
\end{cor}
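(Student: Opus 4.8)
The plan is to deduce this directly from the two main results of the section, since essentially all of the work has already been done. First I would invoke Proposition~\ref{prop:fllift}: any $\rbar : G_K \to \GL_n(\Fpbar)$ that admits a (possibly unipotent extended) Fontaine--Laffaille lift is peu ramifi\'ee. (The existence of such a lift already forces $K/\Qp$ to be unramified, which is the standing assumption of Proposition~\ref{prop:fllift}, so there is nothing to check on that front.) Strictly speaking Proposition~\ref{prop:fllift} gives more --- namely that $\rbar$ admits universally twisted Fontaine--Laffaille lifts with respect to \emph{any} separated, exhaustive increasing filtration --- but for the present statement the bare conclusion that $\rbar$ is peu ramifi\'ee (in the sense of Definition~\ref{def:peu-ramif}, via Proposition~\ref{prop:pr-htl}) is all that is needed.

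Then I would simply apply Corollary~\ref{cor: peu ramimplies HT type whatever and bounded tame type} to $\rbar$. Fixing $n$, that corollary produces a finite extension $K'/K$ depending only on $n$ (and on $K$) such that, for \emph{any} $\lambda \in (\Z^n_+)^{\Hom_\Qp(K,\Qpbar)}$, the peu ramifi\'ee representation $\rbar$ admits a potentially diagonalisable lift $r : G_K \to \GL_n(\Zpbar)$ which is regular of Hodge type $\lambda$ and satisfies that $r|_{G_{K'}}$ is crystalline. This is verbatim the conclusion of Corollary~\ref{cor: peu ramimplies HT type whatever and bounded tame type} for $\rbar$, so the proof is complete; in particular, taking $\lambda = 0$ recovers Theorem~\ref{thmA}.

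There is no genuine obstacle here: the only thing one has to verify is that the notion of ``peu ramifi\'ee'' produced by Proposition~\ref{prop:fllift} is precisely the hypothesis required by Corollary~\ref{cor: peu ramimplies HT type whatever and bounded tame type}, which it is by definition. If anything is worth flagging, it is not a difficulty in the proof but rather the expectation --- discussed in the introduction after Theorem~\ref{thmA} --- that the Fontaine--Laffaille hypothesis should be superfluous; removing it would amount to producing de Rham lifts of an arbitrary $\rbar$, which is open, and this corollary makes no attempt in that direction.
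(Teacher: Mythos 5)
Your proposal is correct and is exactly the argument the paper intends (it gives no explicit proof, the corollary being an immediate consequence of Proposition~\ref{prop:fllift} combined with Corollary~\ref{cor: peu ramimplies HT type whatever and bounded tame type}). The chain you describe --- Fontaine--Laffaille lift $\Rightarrow$ universally twisted lifts $\Rightarrow$ peu ramifi\'ee (via Proposition~\ref{prop:pr-htl}) $\Rightarrow$ the desired potentially diagonalisable lifts --- is precisely how the paper arrives at this statement.
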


The following result will be used in~\cite{EGHS}. 
\begin{cor}
  \label{cor: FH FL niveau 1 lemma}Suppose that
  $\rbar:G_{\Qp}\to\GL_n(\Fpbar)$ admits a
  \emph{(}possibly unipotent extended\emph{)}
  Fontaine--Laffaille lift. 
  Suppose also
  that \[\rbar=
  \begin{pmatrix}
    \chibar_1  &\dots & *\\
& \ddots &\vdots\\
&&\chibar_n
  \end{pmatrix}.\] Suppose that $h_1>\dots>h_n$ are integers such that
  $\chibar_i|_{I_{\Qp}}=\omega^{h_i}$. Then $\rbar$ has a crystalline
  lift of the form \[r=
  \begin{pmatrix}
    \chi_1  &\dots & *\\
& \ddots &\vdots\\
&&\chi_n
  \end{pmatrix},\]where $\chi_i|_{I_{\Qp}}=\cyclo^{h_i}$.
\end{cor}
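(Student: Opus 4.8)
The plan is to apply Proposition~\ref{prop: existence of lifts, prescribed on inertia, for peu ram} to the complete flag filtration $0 = \Ubar_0 \subset \Ubar_1 \subset \dots \subset \Ubar_n$ on the underlying module of $\rbar$ associated to the given upper-triangular form (so that $\Vbar_i := \Ubar_i/\Ubar_{i-1} \cong \chibar_i$), after first producing crystalline characters $\chi_i^{0}$ lifting the $\chibar_i$ with $\chi_i^{0}|_{I_{\Qp}} = \cyclo^{h_i}|_{I_{\Qp}}$.

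First I would check that Proposition~\ref{prop: existence of lifts, prescribed on inertia, for peu ram} is available in this situation. Since $K = \Qp$ is unramified and $\rbar$ admits a (possibly unipotent extended) Fontaine--Laffaille lift, Corollary~\ref{cor: existence of lifts, prescribed on inertia, for FL} tells us that the conclusion of Proposition~\ref{prop: existence of lifts, prescribed on inertia, for peu ram} holds for $\rbar$ with respect to \emph{any} separated, exhaustive increasing filtration, and in particular with respect to the complete flag above. Next, for each $i$ let $\theta_i : G_{\Qp} \to \Zpbar^{\times}$ be the Teichm\"uller lift of the character $\chibar_i\omega^{-h_i}$, which is unramified precisely because $\chibar_i|_{I_{\Qp}} = \omega^{h_i}$; thus $\theta_i$ is an unramified, hence crystalline, character lifting $\chibar_i\omega^{-h_i}$. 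Set $\chi_i^{0} := \cyclo^{h_i}\theta_i$. Then $\chi_i^{0}$ is crystalline, it lifts $\omega^{h_i}\cdot\chibar_i\omega^{-h_i} = \chibar_i$, and $\chi_i^{0}|_{I_{\Qp}} = \cyclo^{h_i}|_{I_{\Qp}}$ since $\theta_i$ is unramified. With the convention that $\cyclo$ has Hodge--Tate weight $-1$, we have $\HT_\kappa(\chi_i^{0}) = \{-h_i\}$ for the unique embedding $\kappa : \Qp\into\Qpbar$, so the hypothesis $h_1 > \dots > h_n$ says exactly that every element of $\HT_\kappa(\chi_{i+1}^{0})$ is strictly greater than every element of $\HT_\kappa(\chi_i^{0})$, which is the Hodge--Tate weight condition in Proposition~\ref{prop: existence of lifts, prescribed on inertia, for peu ram}.

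Finally, applying Proposition~\ref{prop: existence of lifts, prescribed on inertia, for peu ram} with $V_i := \chi_i^{0}$ yields unramified characters $\psi_1,\dots,\psi_n$ with trivial reduction together with a lift $r$ of $\rbar$ that is upper-triangular with diagonal entries $\chi_i^{0}\otimes\psi_i$; since each $V_i = \chi_i^{0}$ is already crystalline over $\Qp$ we may take $K' = \Qp$ in the last assertion of that Proposition, so that $r$ itself is crystalline. Putting $\chi_i := \chi_i^{0}\otimes\psi_i$ --- a crystalline character lifting $\chibar_i$ with $\chi_i|_{I_{\Qp}} = \chi_i^{0}|_{I_{\Qp}} = \cyclo^{h_i}|_{I_{\Qp}}$ because $\psi_i$ is unramified --- then gives $\rbar$ the desired crystalline lift. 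I do not expect any genuine obstacle: the corollary is essentially a repackaging of Proposition~\ref{prop: existence of lifts, prescribed on inertia, for peu ram}, and the only things that require care are lining up the Hodge--Tate weight normalisation with the ordering $h_1 > \dots > h_n$, and noting that the ``avoid finitely many choices of $\psi_i$'' step built into the proof of that Proposition already guarantees $r$ (and not merely $r|_{G_{K'}}$ for a larger $K'$) is crystalline once $K' = \Qp$ works.
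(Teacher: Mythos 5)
Your proposal is correct and is essentially the paper's own argument spelled out in detail: the paper's proof simply invokes Corollary~\ref{cor: existence of lifts, prescribed on inertia, for FL} with the $V_i$ taken to be appropriate unramified twists of $\cyclo^{h_i}$, which is exactly your $\chi_i^{0}=\cyclo^{h_i}\theta_i$. Your verification of the Hodge--Tate weight ordering and the choice $K'=\Qp$ are the right details to check.
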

\begin{proof}
This is immediate from Corollary~\ref{cor: existence of lifts, prescribed on inertia, for
    FL}, taking the $V_i$ to be appropriate unramified twists of $\cyclo^{h_i}$.
\end{proof}

\section{de Rham lifts by global methods}\label{sec:global
  lifts}\subsection{Potential automorphy and globalisation}In
this section, we make use of (global) potential automorphy techniques to produce
potentially crystalline lifts. Ultimately, these results rely on those
of~\cite{BLGGT}, but the actual global results we need are those of~\cite[App.\
A]{emertongeerefinedBM}.

The key idea is as follows: by the methods of~\cite{BLGGT} and~\cite{frankII},
we can often realise $\rbar:G_K\to\GL_n(\Fpbar)$ as the restriction to a decomposition
group of~$\rhobar$,
the reduction mod~$p$ of the $p$-adic Galois representation  associated to an
automorphic representation on some unitary group. Then the existence of congruences between
automorphic representations of different weights and types produces lifts
of~$\rbar$ of the corresponding Hodge and inertial types.

To keep this paper from becoming longer than necessary, and to avoid obscuring the relatively simple arguments
that we need to make, we will not recall the precise definitions of the spaces
of automorphic forms that we work with; the details may be found
in~\cite{emertongeerefinedBM} (and the papers referenced
therein). Suppose from now until the end of Lemma~\ref{lem:char 0
  versus char p} that:
\begin{itemize}
\item $p\nmid 2n$, and
\item $\rbar$ has a potentially diagonalisable lift of some type $(\lambda_{\rbar},\tau_{\rbar})$.
\end{itemize}
Then in particular Conjecture~A.3 of~\cite{emertongeerefinedBM} holds
for~$\rbar$, so that by~\cite[Cor.\ A.7]{emertongeerefinedBM}, there is a
CM field $F$ with maximal totally real field $F^+$, and an irreducible representation
$\rhobar:G_{F^+}\to\cG_n(\Fpbar)$ (where~$\cG_n$ is the algebraic group defined in~\cite[\S2.1]{cht}) which is automorphic in the sense of~\cite[Def.\
5.3.1]{emertongeerefinedBM}, and which globalises~$\rbar$ in the sense that
for each place $v\mid p$ of~$F^+$ we have that $v$ splits in $F$ and that
there is a place $\tv$ of $F$ lying over~$v$ such that $F_{\tv}\cong K$ and
$\rhobar|_{G_{F_{\tv}}}\cong\rbar$. The above data will remain fixed
throughout this section.

 Suppose that for each place $v\mid p$ of $F^+$ we fix a representation of
 $\GL_n(\cO_K)$ on a finite $\Zpbar$-module~$W_v$. Via the isomorphisms
 $\iota_{\tv}$ of~\cite[\S5.2]{emertongeerefinedBM}, we can regard
 $W:=\otimes_{\Zpbar,v\mid p}W_v$ as a representation of $G(\cO_{F^+,p})$, where~$G$ is
 a certain unitary group. Then there is a space of algebraic modular forms
 $S(U,W)$, as in~\cite[\S5.2]{emertongeerefinedBM}. (In fact, \cite{emertongeerefinedBM} works with coefficients in the ring of
 integers of some finite extension of~$\Qp$, rather than with
 $\Zpbar$-coefficients, but this makes no difference for the arguments we are
 making here.)

In particular, for any $(\lambda_v,\tau_v)_{v\mid p}$ a space of
automorphic forms $S_{\lambda,\tau}(U,\Zpbar)$ is defined
in~\cite[\S5.2]{emertongeerefinedBM} for certain sufficiently small
compact open subgroups
$U\subset G(\A_{F^+}^\infty)$ which are hyperspecial at $p$, 
corresponding to taking
each~$W_v$ to be~$\sigma(\lambda_{v},\tau_{v})$. Examining the proof of~\cite[Cor.\ A.7]{emertongeerefinedBM}, we see
that in fact we have $S_{\lambda_{\rbar},\tau_{\rbar}}(U,\Zpbar)_{\m}\ne 0$, where $\m$ is as
in~\cite[Def. 5.3.1]{emertongeerefinedBM}, and in a mild abuse of
notation we write
$(\lambda_{\rbar,v},\tau_{\rbar,v})=(\lambda_{\rbar},\tau_{\rbar})$ for all $v\mid p$. (This says that there is an automorphic
representation $\pi$ of weight~$\lambda_{\rbar}$ and type~$\tau_{\rbar}$, whose associated $p$-adic
Galois representation $\rho_\pi$ lifts~$\rhobar$; this representation
$\rho_\pi$ is the representation $\rho$ constructed in~\cite[Lem.\
A.5]{emertongeerefinedBM}.)

\begin{lem}
  \label{lem:char 0 versus char p}
Keep the notation and assumptions of the preceding discussion.

{\upshape(1)} If for some choice of
  $(\lambda_v,\tau_v)_{v\mid p}$ we have $S_{\lambda,\tau}(U,\Zpbar)_{\m}\ne 0$,
  then for each $v\mid p$, $\rbar$ has a potentially crystalline lift of
  type~$(\lambda_v,\tau_v)$.

{\upshape(2)} $S_{\lambda,\tau}(U,\Zpbar)_{\m}\ne 0$ if and only if there are Serre weights
$F_v$ of $\GL_n(k)$ such that
\begin{itemize}
\item $S(U,\otimes_{\Fpbar,v\mid p}F_{v})_{\m}\ne 0$, and
\item for all $v\mid p$, $F_{v}$
  is a Jordan--H\"older factor of $\sigmabar(\lambda_v,\tau_v)$.
\end{itemize}

\end{lem}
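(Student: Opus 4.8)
The plan is to derive both parts from the discussion preceding the lemma together with a handful of standard properties of the spaces $S(U,-)$ of algebraic modular forms in \cite[\S5.2]{emertongeerefinedBM}. Since the compact open $U$ is sufficiently small, the functor $W \mapsto S(U,W)$ on finite $\Zpbar$-modules (or $\cO$-modules, for $\cO$ the ring of integers in a sufficiently large finite extension of $\Qp$) carrying a $G(\cO_{F^+,p})$-action is exact and commutes with base change of the coefficients, compatibly with the action of the Hecke algebra away from $p$ and the level of $U$, and hence with localisation at $\m$; this is elementary, as for such $U$ these spaces are Hecke-equivariantly isomorphic to finite direct sums of copies of the coefficient module. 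I will also use that $S_{\lambda,\tau}(U,\Zpbar)_\m = S(U,\sigma(\lambda,\tau))_\m$ is finite free over $\cO$ (so that it is nonzero if and only if its reduction mod $p$, namely $S_{\lambda,\tau}(U,\Fpbar)_\m = S(U,\sigma(\lambda,\tau)\otimes_{\Zpbar}\Fpbar)_\m$, is nonzero), and that the Jordan--H\"older factors of $\sigma(\lambda,\tau)\otimes_{\Zpbar}\Fpbar$ --- equivalently of $\sigmabar(\lambda,\tau)$ --- are Serre weights; since $\Fpbar$ is algebraically closed, semisimplification commutes with external tensor products, so these factors are exactly the representations $\otimes_{v\mid p} F_v$ with each $F_v \in \JH(\sigmabar(\lambda_v,\tau_v))$.

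For part (1): assuming $S_{\lambda,\tau}(U,\Zpbar)_\m \ne 0$, I would repeat verbatim the argument of the paragraph preceding the lemma --- which is the content of \cite[Lem.~A.5]{emertongeerefinedBM} --- with $(\lambda,\tau)$ in place of $(\lambda_{\rbar},\tau_{\rbar})$: nonvanishing of $S_{\lambda,\tau}(U,\Zpbar)_\m \otimes_{\Zpbar}\Qpbar$ produces an automorphic representation $\pi$ of weight $\lambda$ and type $\tau$ whose associated $p$-adic Galois representation $\rho_\pi$ lifts $\rhobar$, and local-global compatibility at each place $\tv \mid p$ shows that $\rho_\pi|_{G_{F_{\tv}}}$ is potentially crystalline of type $(\lambda_v,\tau_v)$. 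Transporting along the fixed isomorphisms $F_{\tv}\cong K$ and $\rhobar|_{G_{F_{\tv}}}\cong\rbar$ then gives the desired potentially crystalline lift of $\rbar$.

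For part (2): I would fix a composition series of the $G(\cO_{F^+,p})$-module $\sigma(\lambda,\tau)\otimes_{\Zpbar}\Fpbar$ and apply the exact, Hecke-equivariant functor $S(U,-)$ and localise at $\m$. This exhibits a filtration of $S_{\lambda,\tau}(U,\Fpbar)_\m$ whose graded pieces are the $S(U,G)_\m$ for $G$ running over the Jordan--H\"older factors of $\sigma(\lambda,\tau)\otimes\Fpbar$ (with multiplicity); hence $S_{\lambda,\tau}(U,\Fpbar)_\m \ne 0$ if and only if $S(U,G)_\m \ne 0$ for at least one Jordan--H\"older factor $G$, the set of such factors being independent of the chosen composition series. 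Combining this with the two preliminary observations --- that $S_{\lambda,\tau}(U,\Zpbar)_\m\ne 0 \iff S_{\lambda,\tau}(U,\Fpbar)_\m\ne 0$, and that $\JH(\sigmabar(\lambda,\tau))$ consists precisely of the $\otimes_{v\mid p} F_v$ with $F_v \in \JH(\sigmabar(\lambda_v,\tau_v))$, each a Serre weight --- yields exactly the claimed equivalence.

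The proof is essentially a formal repackaging of results already set up in the excerpt; the one point that is not pure bookkeeping is the local-global compatibility at $p$ invoked in part (1), i.e.\ that an automorphic representation of weight $\lambda$ and type $\tau$ has $\rho_\pi|_{G_{F_{\tv}}}$ potentially crystalline of Hodge type $\lambda_v$ and inertial type $\tau_v$ (with the inertial local Langlands normalisation of Section~\ref{sec:inertial-types}). But this is precisely the input already used to construct $\rho$ in \cite[Lem.~A.5]{emertongeerefinedBM} (ultimately resting on \cite{BLGGT}), so no new argument is needed; the remaining ingredients --- exactness and base change for $S(U,-)$ at sufficiently small level, and the external-tensor-product behaviour of semisimplification over $\Fpbar$ --- are routine.
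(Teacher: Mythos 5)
Your proof is correct and takes essentially the same route as the paper, whose own proof simply cites \cite[Prop.~5.3.2]{emertongeerefinedBM} for (1) and \cite[Lem.~2.1.6]{blggun} (extended to nontrivial types) for (2); your argument is a faithful unwinding of exactly what those references contain (torsion-freeness plus Deligne--Serre lifting and local-global compatibility for (1), and exactness/base-change of $S(U,-)$ at sufficiently small level together with the external-tensor-product description of Jordan--H\"older factors for (2)).
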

\begin{proof}
  (1) is immediate from~\cite[Prop.\ 5.3.2]{emertongeerefinedBM}. (We remind the reader
  that $S_{\lambda,\tau}(U,\Zpbar)$ is torsion-free.) In the case
  that $\tau$ is trivial, (2) is~\cite[Lem.\ 2.1.6]{blggun}, and the proof goes
  over unchanged to the general case.
\end{proof}

\begin{thm}\label{thm:pot diag implies serre weight}
  Suppose that $p\nmid 2n$, and that $\rbar$ has a potentially diagonalisable
  lift of some regular weight. Then the following hold.
  \begin{enumerate}
  \item There exists a finite extension $K'/K$ {\upshape(}depending only on $n$
    and $K$, and not on $\rbar${\upshape)} such that $\rbar$ has a lift $r : G_K
    \to \GL_n(\Zpbar)$ of Hodge type $0$ that becomes crystalline over $K'$.

  \item The representation $\rbar$ has a crystalline lift of some
  Serre weight.
  \end{enumerate}
\end{thm}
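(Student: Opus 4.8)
The strategy is to reduce both assertions to combinatorics of Serre weights, using Lemma~\ref{lem:char 0 versus char p} together with the globalisation $\rhobar:G_{F^+}\to\cG_n(\Fpbar)$ of $\rbar$ fixed in the discussion preceding that lemma. Since $p\nmid 2n$ and $\rbar$ has a potentially diagonalisable lift of some regular weight, that discussion applies and gives $S_{\lambda_{\rbar},\tau_{\rbar}}(U,\Zpbar)_{\m}\neq 0$; by part~(2) of Lemma~\ref{lem:char 0 versus char p} there are Serre weights $F_v$ of $\GL_n(k)$, one for each place $v\mid p$ of $F^+$, with $S(U,\otimes_{v\mid p}F_v)_{\m}\neq 0$. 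Write $F_v=F_{a_v}$ with $a_v\in (X_1^{(n)})^{\Hom(k,\Fpbar)}$. All that remains is, at each $v$, to produce a pair $(\mu_v,\tau_v)$ for which $F_v$ is a Jordan--H\"older factor of $\sigmabar(\mu_v,\tau_v)$ and to feed this back through Lemma~\ref{lem:char 0 versus char p}, using that $\rhobar|_{G_{F_{\tv}}}\cong\rbar$ with $F_{\tv}\cong K$ at the distinguished places $\tv$.

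For part~(2) I would take $\tau_v=1$ and let $\mu_v$ be a lift of $a_v$ in the sense of Section~\ref{sec:cryst-lifts-font}; then $F_v=F_{a_v}$ is a Jordan--H\"older factor of $L_{\mu_v}\otimes_{\Zpbar}\Fpbar=\sigmabar(\mu_v,1)$, as recalled there. Since $S(U,\otimes_v F_v)_{\m}\neq 0$, the reverse implication in part~(2) of Lemma~\ref{lem:char 0 versus char p} gives $S_{\mu,1}(U,\Zpbar)_{\m}\neq 0$ with $\mu=(\mu_v)_v$, and then part~(1) of that lemma shows that $\rbar$ has a potentially crystalline lift of type $(\mu_{\tv},1)$. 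Such a lift has trivial inertial type, hence is crystalline, and since $\mu_{\tv}$ is a lift of $a_{\tv}$ it is by definition a crystalline lift of Serre weight $F_{a_{\tv}}$. This proves~(2).

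For part~(1) I would instead take $\mu_v=0$, paying for this with a tame inertial type of bounded complexity. The input needed here is the (standard) fact that every Serre weight of $\GL_n(k)$ is a Jordan--H\"older factor of $\sigmabar(0,\tau)$ for a suitable \emph{tame} inertial type $\tau$ of order prime to $p$ (for instance a tame principal series type attached to a lift of the weight, whose reduction contains the corresponding $N_a=F_a$ in its socle), and that since there are only finitely many Serre weights, depending only on $n$ and $k$, the order of $\tau$ may be bounded purely in terms of $n$ and $K$. Granting this, choose for each $v\mid p$ such a $\tau_v$ with $F_v\in\JH(\sigmabar(0,\tau_v))$; then $S(U,\otimes_v F_v)_{\m}\neq 0$ and part~(2) of Lemma~\ref{lem:char 0 versus char p} give $S_{0,\tau}(U,\Zpbar)_{\m}\neq 0$ with $\tau=(\tau_v)_v$, and part~(1) then provides a lift $r:G_K\to\GL_n(\Zpbar)$ of $\rbar$ that is potentially crystalline of type $(0,\tau_{\tv})$, hence of Hodge type~$0$. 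As $\tau_{\tv}$ is tame of bounded order, $r$ becomes crystalline over a fixed finite extension $K'/K$ that trivialises the restriction to inertia of every tame type of order dividing the chosen bound; this $K'$ depends only on $n$ and $K$. This proves~(1).

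Essentially all of the substance is imported through the nonvanishing $S_{\lambda_{\rbar},\tau_{\rbar}}(U,\Zpbar)_{\m}\neq 0$ and Lemma~\ref{lem:char 0 versus char p}, so the one genuinely new ingredient, and the step I expect to be the main obstacle, is the representation-theoretic input used in part~(1): that every Serre weight of $\GL_n(k)$ occurs in the reduction modulo~$p$ of a tame type of order bounded in terms of $n$ and $k$. Once that is in hand, both parts are a purely formal manipulation with Lemma~\ref{lem:char 0 versus char p}.
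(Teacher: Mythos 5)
Your proof of part (2) is exactly the paper's argument, and it is correct. For part (1), however, the step you yourself identify as the main obstacle --- that every Serre weight of $\GL_n(k)$ is a Jordan--H\"older factor of $\sigmabar(0,\tau)$ for a \emph{tame} type $\tau$ of order bounded in terms of $n$ and $K$ --- is a genuine gap, and the mechanism you suggest for filling it does not work. The natural candidate, ``a tame principal series type attached to a lift of the weight,'' requires the following: starting from the character $\chibar = \chibar_1 \otimes \cdots \otimes \chibar_n$ of $T(k)$ through which $T(k)$ acts on the relevant line of $F^{U(k)}$, one must lift the $\chibar_i$ to $n$ \emph{pairwise distinct} tame characters of $\cO_K^\times$ (distinctness is what makes $\Ind_{I}^{\GL_n(\cO_K)}\chi$ a type for the principal series $\oplus_i\chi_i$). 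But a tame character is trivial on $1+\varpi_K\cO_K$ and hence determined by its restriction to $k^\times$, where the Teichm\"uller lift is the unique lift of $\chibar_i$; so whenever two of the $\chibar_i$ coincide (e.g.\ already for the trivial weight when $n\ge 2$, or any weight with a repeated ``digit''), no such tame principal series type exists. One could try to repair this with tame types of higher niveau, but the assertion that every Serre weight occurs in the reduction of some tame type is not a standard fact for general $n$, and even granting it you would additionally need the inertial local Langlands correspondence (the existence and properties of $\sigma(\tau)$, which is only conjectural in general) for those types in order to invoke Lemma~\ref{lem:char 0 versus char p}(2).

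The paper's proof of (1) is designed precisely to get around this. It abandons tameness: it picks a character $\chibar_v$ of the Iwahori $I_v$ with $V_v|_{I_v}\onto\chibar_v$ and lifts $\chibar_v|_{T(\cO_{F_{\tilde v}})}$ to $\chi_v = \chi_{1,v}\otimes\cdots\otimes\chi_{n,v}$ with the $\chi_{i,v}$ pairwise distinct and trivial only on $1+\varpi_{\tilde v}^{s}\cO_{F_{\tilde v}}$, where $s$ is chosen with $q^{s-1}\ge n$; the whole point of allowing conductor $s>1$ is that each $\chibar_{i,v}$ then has $q^{s-1}\ge n$ lifts, giving enough room to force distinctness. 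The required type theory is then supplied by Roche's construction of the pair $(J_{\chi_v},\rho_{\chi_v})$ together with his Theorem~7.7 and the Bernstein--Zelevinsky irreducibility criterion, which identify $\pi_v$ as a full principal series and hence pin down $\mathrm{rec}(\pi_v)|_{I_K}$ as $\oplus_i\chi_{i,v}^{-1}$; this replaces any appeal to an inertial local Langlands correspondence for tame types. Consequently the uniform extension $K'$ is not one trivialising tame types of bounded order, but the extension of Lemma~\ref{lem:localfields} (of degree $q^{s-1}(q-1)$) trivialising all characters of conductor at most $s$, which still depends only on $n$ and $K$. If you want to salvage your outline, you must either prove the tame-type claim (together with the relevant cases of the inertial correspondence), or, as the paper does, trade tameness for a conductor bound.
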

\begin{proof}
We begin with the proof of (2), since the argument is much shorter.
Let $r$ be the given potentially diagonalisable lift, and as above, write
  $(\lambda_{\rbar},\tau_{\rbar})$ for the type of~$r$. By Lemma~\ref{lem:char 0
    versus char p}(2), there are Jordan--H\"older factors $F_{a_v}$ of
  $\sigmabar(\lambda_{\rbar},\tau_{\rbar})$ (possibly varying with~$v$) such that
  $S(U,\otimes_{\Fpbar,v\mid p}F_{a_v})_{\m}\ne 0$. Let $\lambda_v$ be a lift of
  $a_v$ for each $v$, and let $\tau_v$ be trivial for each~$v$. Applying Lemma~\ref{lem:char 0
    versus char p}(2) again, we see that $S_{\lambda,1}(U,\Zpbar)_{\m}\ne 0$. By  Lemma~\ref{lem:char 0
    versus char p}(1), $\rbar$ has a crystalline lift of Hodge type~$\lambda_v$ for
  each $v\mid p$; any such lift will do.

Turn now to (1). 
As in the previous part we get
$S(U,V)_{\m} \neq 0$ for some irreducible representation $V =
\otimes_{v \mid p} V_v$ of $G(\cO_{F^+} \otimes \Zp)$ over $\Fpbar$.
Let $T \subset B \subset \GL_n$ denote the subgroups of diagonal and upper-triangular matrices,
as algebraic groups over $\Z$.
Consider $V_v$ as a representation of $\GL_n(\cO_{F_{\tilde v}})=:
K_v$ via $\iota_{\tilde v}$. Let $I_v \subset K_v$ denote the preimage
of $B(k_{\tilde v}) \subset \GL_n(k_{\tilde v})$. Then we can choose a
character $\chibar_v : I_v \to \Fpbar^{\times}$ such that $V_v |_{I_v}
\onto \chibar_v$. 

Let $q := \# k$.
We claim that for any $s\ge 1$ such that $q^{s-1} \ge n$, we can find
a (smooth) lift $\chi_v = \chi_{1,v} \otimes \cdots \otimes
\chi_{n,v} : T(\cO_{F_{\tilde v}}) \to \Zpbar^{\times}$ of $\chibar_v|_{T(\cO_{F_{\tilde v}})} = \chibar_{1,v}
\otimes \cdots \otimes \chibar_{n,v}$ such that the
$\{\chi_{i,v}\}_{i=1}^n$ are pairwise distinct and
$\chi_{i,v}|_{1+\varpi_{\tilde v}^s \cO_{F_{\tilde v}}} = 1$ for all $i$. Indeed,
recalling that $F_{\tilde v} \cong K$, write
$\cO_{F_{\tilde v}}^{\times}/(1 + \varpi_{\tilde v}^s \cO_{F_{\tilde v}}) \cong k^{\times} \times H$ (via the Teichm\"uller splitting), where
$H$ is abelian of order $q^{s-1}$. Then each $\chibar_{i,v}|_{k^\times}$ lifts uniquely to $\Zpbar^{\times}$, whereas $\chibar_{i,v}|_H = 1$
and can be lifted arbitrarily to $\Zpbar^{\times}$. Hence it is enough to note that $\#\Hom(H,\Zpbar^{\times})
= \#H = q^{s-1} \ge n$, and this proves the claim. For the rest
of the proof, we fix such a choice of $s$ and $\chi_v$.

Now, \cite[\S 3]{MR1621409} (applied with a standard Chevalley basis such that
$U_{\alpha,0} = U_{\alpha} \cap \GL_n(\cO_{F_{\tilde v}})$ for all
roots $\alpha$) provides a pair $(J_{\chi_v},\rho_{\chi_v})$
consisting of a compact open subgroup $J_{\chi_v} \subset I_v$ that
contains $T(\cO_{F_{\tilde v}})$ and a smooth character $\rho_{\chi_v}
: J_{\chi_v} \to \Zpbar^{\times}$ such that $\rho_{\chi_v}
|_{T(\cO_{F_{\tilde v}})} = \chi_v$. By construction, $\overline\rho_{\chi_v}$ is the restriction
of $\chibar_v$ to $J_{\chi_v}$, so by Frobenius reciprocity we get
a $K_v$-equivariant map $V_v \into \Ind_{J_{\chi_v}}^{K_v}(\rhobar_{\chi_{v}})$.

In particular, 
$S\left(U,\otimes_{v\mid p} \left(\Ind_{J_{\chi_v}}^{K_v} \rho_{\chi_v}\right)\right)_{\m}\neq 0$.
Using Deligne--Serre lifting we
get an automorphic representation $\pi$ of $G(\A_{F^+})$ with associated Galois
representation $\rho_{\pi}$ lifting $\rhobar|_{G_F}: G_F \to
\GL_n(\Fpbar)$ 
such that (i)
$\pi_\infty \cong \mathbf{1}$ and (ii)
$\Hom_{K_v}(\Ind_{J_{\chi_v}}^{K_v} \rho_{\chi_v}^{-1},\pi_v ) \neq 0$ (again via $\iota_{\tilde v}$) for any
$v \mid p$. 
Applying \cite[Thm~7.7]{MR1621409} (noting there are no
restrictions on $p$, cf.\ \cite[Lem.~3.1.6]{cht}), we deduce that
$\pi_v$ is a subquotient of $\Ind_{B(F_{\tilde v})}^{G(F_{\tilde v})}(\tilde \chi_v^{-1})$ for
some $\tilde\chi_v:T(F_{\tilde v}) \to \Qpbar^{\times}$ extending $\chi_v$. (Note
that $J_{\chi_v} = J_{\chi_v^{-1}}$.) Since the characters $\{\chi_{i,v}\}_{i=1}^n$ are pairwise distinct, the
Bernstein--Zelevinsky irreducibility criterion implies that $\pi_v \cong \Ind_{B(F_{\tilde v})}^{G(F_{\tilde v})}(\tilde
\chi_v^{-1})$.

It follows that $\mathrm{rec}(\pi_v)$ has $N=0$ and on inertia is of the form
$\chi^{-1}_{1,v} \oplus \cdots \oplus \chi^{-1}_{n,v}$ via the local Artin map, where 
$\mathrm{rec}$ denotes the local Langlands correspondence, normalised
as in \cite{emertongeerefinedBM} (i.e., as in \cite{ht}). 
Using Lemma~\ref{lem:localfields} below there exists a finite extension $K'/K$
depending only on $n$ and $K$ such that $\mathrm{rec}(\pi_v)|_{I_{K'}}$ is
trivial. Applying local-global compatibility at $p$ to $\rho_{\pi}$,
we deduce that
for any $v \mid p$, the representation $\rho_{\pi}|_{G_{F_{\tv}}}$ provides a desired lift of $\rhobar|_{G_{F_{\tv}}}\cong\rbar$.
\end{proof}

\begin{remark}
  The argument in the proof above shows that if $\chi_{i,v} : \cO_{F_v}^\times \to \Qpbar^\times$ are pairwise distinct smooth characters
of $\cO_{F_v}^\times$ (or equivalently of $I_{F_v}$), then $\Ind_{J_{\chi_v}}^{K_v} \rho_{\chi_v}$ is a $K_v$-type corresponding to
$\oplus_{i=1}^n \chi_{i,v}$ under the inertial Langlands correspondence, i.e.\
\cite[Conj.~4.1.3]{emertongeerefinedBM} holds with $\sigma(\oplus_{i=1}^n \chi_{i,v}) \cong \Ind_{J_{\chi_v}}^{K_v} \rho_{\chi_v}^{-1}$.
\end{remark}

\begin{lem}\label{lem:localfields}
Suppose $K/\Qp$ is a finite extension and $s \ge 1$. Then there
exists a finite extension $L/K$ such that any smooth character $\chi :
W_K \to \mathbb{C}^\times$ that is trivial on the ramification subgroup
$G_K^s$ satisfies $\chi|_{I_L} = 1$.
\end{lem}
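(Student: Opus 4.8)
The plan is to deduce this from local class field theory. Fix an integer $N = \lceil s \rceil$, so that $N \ge s$ and hence $G_K^N \subseteq G_K^s$, since upper-numbering ramification subgroups decrease with the index. Recall that a smooth character $\chi \colon W_K \to \CC^\times$ corresponds, via the Artin reciprocity map $\Art_K \colon K^\times \to W_K^{\ab}$, to a locally constant character $\chi \circ \Art_K$ of $K^\times$; under this correspondence $I_K$ matches $\cO_K^\times$, and the standard compatibility of $\Art_K$ with the ramification filtration identifies the image of $G_K^N$ in $W_K^{\ab}$ with $\Art_K(1 + \frakm_K^N)$. Consequently, if $\chi$ is trivial on $G_K^s$ then it is trivial on $G_K^N$, and therefore $\chi \circ \Art_K$ is trivial on $1 + \frakm_K^N$.

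Next I would produce $L$. Fix a uniformiser $\pi$ of $K$ and set $H := \langle \pi \rangle \cdot (1 + \frakm_K^N) \subseteq K^\times$, an open subgroup of finite index, and let $L/K$ be the finite abelian extension with $N_{L/K}(L^\times) = H$ (one may take for $L$ the Lubin--Tate torsion field of level $N$ attached to $\pi$). Since the norm of a unit is a unit, we have $N_{L/K}(\cO_L^\times) = N_{L/K}(L^\times) \cap \cO_K^\times = H \cap \cO_K^\times = 1 + \frakm_K^N$.

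To conclude, let $\chi$ be as in the statement. Via $\Art_L$, the character $\chi|_{W_L}$ corresponds to $(\chi \circ \Art_K) \circ N_{L/K} \colon L^\times \to \CC^\times$ (compatibility of restriction of Galois characters with the field norm), while $I_L$ corresponds to $\cO_L^\times$. Hence $\chi|_{I_L} = 1$ precisely when $\chi \circ \Art_K$ is trivial on $N_{L/K}(\cO_L^\times) = 1 + \frakm_K^N$, which is exactly what the first step established. (The unramified part of $\chi$ plays no role, being automatically trivial on $I_L$.)

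I do not expect a genuine obstacle here; the one point that needs a little care is that $L$ \emph{cannot} simply be taken to be ``the ray class field of $K$ of conductor $\frakm_K^N$'', because in the local setting $1 + \frakm_K^N$ has infinite index in $K^\times$ and the associated extension has infinite degree. Enlarging the prospective norm group by $\langle \pi \rangle$ repairs this, at the harmless cost of making $L$ depend on the choice of $\pi$ (any choice works, and the resulting $L$ still depends only on $K$ and $s$ once $\pi$ is fixed). The two inputs used --- compatibility of the reciprocity map with the upper-numbering ramification filtration, and compatibility of restriction of characters with the field norm --- are entirely standard.
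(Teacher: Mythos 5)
Your proof is correct and uses essentially the same mechanism as the paper: local class field theory plus the compatibility of the reciprocity map with the unit/ramification filtrations, so that triviality on $G_K^s$ becomes triviality on $1+\frakm_K^{\lceil s\rceil}$, and one takes for $L$ the totally ramified abelian extension with norm group $\langle\pi\rangle(1+\frakm_K^{\lceil s\rceil})$. The paper phrases this by first passing to an extension $M_s$ of $K^{\nr}$ and then descending to a finite $L$ with $M_s\subset L^{\nr}$, but the resulting field (of degree $q^{s-1}(q-1)$) and the underlying argument are the same as yours.
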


\begin{proof}
By local class field theory there exists a finite extension
$M_s/K^{\nr}$ that is independent of $\chi$ such that $\chi|_{G_{M_s}} = 1$. (We
can take $M_s/K$ abelian such that $K^{\mathrm{ab}}/M$ has Galois group
$1+\varpi_K^s \cO_K$, with $\varpi_K$ a uniformiser of $K$.) Then we choose $L/K$ finite such that $M_s$ is contained in
$ L \cdot K^{\nr} = L^{\nr}$. This implies $\chi|_{I_L} = 1$. In fact, this argument
shows that we can take $L/K$ of degree $q^{s-1}(q-1)$, where $q = \#k$.
\end{proof}

Our final result may be viewed as a ``weak Breuil--M\'ezard''-type statement.

\begin{thm}
  \label{thm:FL lift implies lots more lifts}Suppose that $p\ne n$,
 that $K/\Qp$ is
  unramified, and that $\rbar$ has a crystalline lift
  of  weight~$F$ for some extended FL weight~$F$. 
  If $F$ is a
  Jordan--H\"older factor of~$\sigmabar(\lambda,\tau)$ for some $\lambda$,
  $\tau$, then $\rbar$ has a potentially crystalline lift of type~$(\lambda,\tau)$.
\end{thm}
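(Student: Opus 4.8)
The plan is to feed the given crystalline lift into the automorphic machinery of Section~\ref{sec:global lifts}. First I would check that the hypotheses put us in the situation studied there. A crystalline representation of extended FL weight has pairwise distinct Hodge--Tate weights and is potentially diagonalisable (by the main theorem of~\cite{GaoLiu12}, as recalled in Section~\ref{sec:potent-diag}), so the assumption that $\rbar$ has a crystalline lift of some extended FL weight $F=F_a$ says precisely that $\rbar$ has a potentially diagonalisable lift of some regular weight---concretely, of type $(\lambda_0,1)$ for a suitable lift $\lambda_0$ of $a$. Moreover, since $a\in X_1^{(n)}$ has $a_{\kappabar,1}\ge a_{\kappabar,n}$ while the extended FL condition forces $a_{\kappabar,1}-a_{\kappabar,n}\le p-n$, we get $p\ge n$, hence $p>n$ using the hypothesis $p\ne n$ (the case $n=1$ being elementary, since every character of $G_K$ admits the required lift directly), and therefore $p\nmid 2n$. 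Thus the entire discussion preceding Lemma~\ref{lem:char 0 versus char p} applies with $(\lambda_{\rbar},\tau_{\rbar})=(\lambda_0,1)$: it furnishes a CM field, an automorphic representation $\rhobar$ globalising $\rbar$, a tame level $U$, and the nonvanishing $S_{\lambda_0,1}(U,\Zpbar)_\m\ne 0$.

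The key local point is an integral comparison. Since $K/\Qp$ is unramified, $\lambda_0$ is identified with $a$, and since $a$ is an extended FL weight we have $a_{\kappabar,1}-a_{\kappabar,n}\le p-n\le p-(n-1)$ for every $\kappabar$; hence by the remark at the end of Section~\ref{sec:repr-gl_n-serre}, $L_{\lambda_0}\otimes_{\Zpbar}\Fpbar\cong F_a=F$ as a representation of $\GL_n(\cO_K)$. In particular $\sigmabar(\lambda_0,1)$, which is the semisimplification of $L_{\lambda_0}\otimes\Fpbar$, equals $F$, whose only Jordan--H\"older factor is $F$ itself. Applying Lemma~\ref{lem:char 0 versus char p}(2) to the nonvanishing $S_{\lambda_0,1}(U,\Zpbar)_\m\ne 0$ (with $(\lambda_v,\tau_v)=(\lambda_0,1)$ for every $v\mid p$) produces Serre weights $F_v$, each a Jordan--H\"older factor of $\sigmabar(\lambda_0,1)=F$, with $S(U,\otimes_{v\mid p}F_v)_\m\ne 0$; necessarily $F_v=F$ for all $v$, so $S(U,\otimes_{v\mid p}F)_\m\ne 0$.

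Finally I would run Lemma~\ref{lem:char 0 versus char p}(2) in the opposite direction, now with the target pair $(\lambda,\tau)$. Taking $(\lambda_v,\tau_v)=(\lambda,\tau)$ and $F_v=F$ for all $v\mid p$, the first condition of Lemma~\ref{lem:char 0 versus char p}(2) holds by the previous paragraph and the second holds by hypothesis, since $F$ is a Jordan--H\"older factor of $\sigmabar(\lambda,\tau)$. Hence $S_{\lambda,\tau}(U,\Zpbar)_\m\ne 0$, and Lemma~\ref{lem:char 0 versus char p}(1) then gives a potentially crystalline lift of $\rbar$ of type $(\lambda_v,\tau_v)=(\lambda,\tau)$, as required.

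The argument is essentially an assembly of results already in place, and I do not expect a serious obstacle. The step that requires the most care is the second paragraph: one must ensure that the Serre weight extracted on the extended-FL side is forced to be exactly $F$, rather than merely to have $F$ among its Jordan--H\"older constituents---this is where the isomorphism $L_{\lambda_0}\otimes\Fpbar\cong F$, and hence the unramifiedness of $K$ together with the extended FL bound, is genuinely used. One should also note that the same tame level $U$ is used throughout, which is legitimate because it is fixed once and for all in the global setup.
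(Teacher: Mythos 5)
Your proposal is correct and follows essentially the same route as the paper: reduce to $p\nmid 2n$ (disposing of $p=2$, $n=1$ separately via characters/local class field theory), use \cite{GaoLiu12} to see the extended FL lift is potentially diagonalisable so the global setup applies with $(\lambda_{\rbar},\tau_{\rbar})=(\lambda_0,1)$, note $L_{\lambda_0}\otimes\Fpbar\cong F$ so that Lemma~\ref{lem:char 0 versus char p}(2) forces the Serre weight to be $F$ itself, and then run Lemma~\ref{lem:char 0 versus char p}(2) and (1) with the target pair $(\lambda,\tau)$. Your explicit remark that the extracted weight is exactly $F$ because $\sigmabar(\lambda_0,1)$ is irreducible is the same point the paper uses implicitly.
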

\begin{proof} Choose $a \in(X_1^{(n)})^{\Hom(k,\Fpbar)}$ such that
  $F \cong F_a$.  The conditions that $p\ne n$ and $\rbar$ has a crystalline lift
  of  weight~$F_a$ with~$a$ an extended FL weight imply that $p >
  n$; so either $p\nmid 2n$, as we have assumed throughout this
  section, or else $p=2$ and $n=1$.

  First suppose that $p \nmid 2n$. By the main result of~\cite{GaoLiu12} any crystalline
  representation of extended
  FL weight is potentially diagonalisable. Let $\lambda'$ be the
  lift of $a$ (uniquely defined, as $K/\Qp$ is unramified). Since
  $a_{\kappabar,1}-a_{\kappabar,n}\le p-(n-1)$ 
  for each~$\kappabar$,
  $F_a=L_a\otimes_{\Zpbar}\Fpbar$ (see \S\ref{sec:repr-gl_n-serre}). By hypothesis, we can apply the 
  constructions from the paragraphs preceding Lemma~\ref{lem:char 0
    versus char p}  with $\lambda_{\rbar}=\lambda'$ and $\tau_{\rbar}=1$ to deduce that
  $S_{\lambda',1}(U,\Zpbar)_{\m}\ne 0$. By
  Lemma~\ref{lem:char 0 versus char p}(2),
  $S(U,\otimes_{\Fpbar,v|p}F_{a})_{\m}\ne 0$.   Applying Lemma~\ref{lem:char 0 versus char p}(2) with
  $(\lambda_v,\tau_v)=(\lambda,\tau)$ for each~$v$, we see that
  $S_{\lambda,\tau}(U,\Zpbar)_{\m}\ne 0$, and the result follows from
  Lemma~\ref{lem:char 0 versus char p}(1).

On the other hand, the case $n=1$ is an easy consequence of local class field
theory:\ $\sigma(\tau)^\vee$ is obtained from $\tau$ by local class field
theory, so that the locally algebraic
characters of $K^{\times}$ extending $\sigma(\lambda,\tau)$ correspond to de Rham
characters of type $(\lambda,\tau)$, while $\rbar|_{I_K}$ corresponds to $F_a$.
\end{proof}

\bibliographystyle{amsalpha} 
\bibliography{FLlifts} 

\end{document}